\renewcommand{\theequation}{\thesection.\arabic{equation}}
\newtheorem{assumption}[theorem]{Assumption}
\numberwithin{equation}{section}
\begin{document}

\title{Inhomogeneous spatial patterns in diffusive predator-prey system with spatial memory and predator-taxis}

\author{Yehu Lv}

\institute{Yehu Lv \at
           School of Mathematical Sciences, Beijing Normal University, Beijing 100875, China \\
           \email{mathyehul@163.com}
}

\date{}

\maketitle

\begin{abstract}
In this paper, we consider a diffusive predator-prey system with spatial memory and predator-taxis. Since in this system, the memory delay appears in the diffusion term, and the diffusion term is nonlinear, the classical normal form of Hopf bifurcation in the reaction-diffusion system with delay can't be applied to this system. Thus, in this paper, we first derive an algorithm for calculating the normal form of Hopf bifurcation in this system. Then in order to illustrate the effectiveness of our newly developed algorithm, we consider the diffusive Holling-Tanner model with spatial memory and predator-taxis. The stability and Hopf bifurcation analysis of this model are investigated, and the direction and stability of Hopf bifurcation periodic solution are also researched by using our newly developed algorithm for calculating the normal form of Hopf bifurcation. At last, we carry out some numerical simulations, two stable spatially inhomogeneous periodic solutions corresponding to the mode-1 and mode-2 Hopf bifurcations are found, which verifies our theoretical analysis results.
\keywords{Predator-prey system \and Memory delay \and Predator-taxis \and Hopf bifurcation \and Normal form \and Periodic solutions}
\subclass{35B10 \and 37G05 \and 37L10 \and 92D25}
\end{abstract}

\section{Introduction}
\label{sec:1}

The reaction-diffusion equations based on the Fick's law have been widely used in physics, chemistry and biology \cite{lv1,lv2,lv3}. More precisely, based on the Fick's law, that is the movement flux is in the direction of negative gradient of the density distribution function, the diffusive Brusselator model with gene expression time delay \cite{lv4}, the diffusive predator-prey model in heterogeneous environment \cite{lv5,lv6}, the diffusive predator-prey model with a protection zone \cite{lv7}, the diffusive predator-prey model with prey social behavior \cite{lv8}, the diffusive predator-prey model with protection zone and predator harvesting \cite{lv9} have been studied by many scholars. Furthermore, in order to include the episodic-like spatial memory of animals, Shi et al. \cite{lv10} directed movement toward the negative gradient of the density distribution function at past time, and they proposed the following diffusive model with spatial memory
\begin{equation}\left\{\begin{aligned}
&\frac{\partial u(x,t)}{\partial t}=d_{1}\Delta u(x,t)+d_{2}\left(u(x,t)u_{x}(x,t-\tau)\right)_{x}+f\left(u(x,t)\right), & x \in \Omega,~t>0, \\
&\frac{\partial u}{\partial \mathbf{n}}(x,t)=0, & x \in \partial \Omega,~t>0, \\
&u(x,t)=u_{0}(x,t), & x \in \Omega,~-\tau \leq t\leq 0,
\end{aligned}\right.\end{equation}
where $u(x,t)$ is the population density at spatial location $x$ and time $t$, $d_{1}$ and $d_{2}$ are the Fickian diffusion coefficient and the memory-based diffusion coefficient, respectively, $\Omega \subset \mathbb{R}$ is a smooth and bounded domain, $u_{0}(x,t)$ is the initial function, $\Delta u(x,t)=\partial^{2}u(x,t)/\partial x^{2}$, $u_{x}(x,t)=\partial u(x,t)/\partial x$, $u_{x}(x,t-\tau)=\partial u(x,t-\tau)/\partial x$, $u_{xx}(x,t-\tau)=\partial^{2} u(x,t-\tau)/\partial x^{2}$, and $\mathbf{n}$ is the outward unit normal vector at the smooth boundary $\partial \Omega$. Here, the time delay $\tau>0$ represents the averaged memory period, which is usually called as the memory delay, and $f\left(u(x,t)\right)$ describes the chemical reaction or biological birth/death. Moreover, in order to further investigate the influence of memory delay on the stability of the positive constant steady
state, on the basis of model (1.1), Shi et al. \cite{lv11} studied the spatial memory diffusion model with memory and maturation delays. Furthermore, Song et al. \cite{lv12} proposed the following memory-based diffusion system subjects to homogeneous Neumann boundary condition
\begin{equation}\left\{\begin{aligned}
&\frac{\partial u(x,t)}{\partial t}=d_{11}\Delta u(x,t)+f\left(u(x,t),v(x,t)\right), & x \in \Omega,~t>0, \\
&\frac{\partial v(x,t)}{\partial t}=d_{22}\Delta v(x,t)-d_{21}\left(v(x,t)u_{x}(x,t-\tau)\right)_{x}+g\left(u(x,t),v(x,t)\right), & x \in \Omega,~t>0, \\
&u_{x}(0,t)=u_{x}(\ell\pi,t)=v_{x}(0,t)=v_{x}(\ell\pi,t)=0, & t \geq 0, \\
&u(x,t)=u_{0}(x,t), & x \in \Omega,~-\tau \leq t \leq 0, \\
&v(x,t)=v_{0}(x), & x \in \Omega,
\end{aligned}\right.\end{equation}
where $\Omega=(0,\ell\pi)$ with $\ell\in\mathbb{R}^{+}$, $u(x,t)$ and $v(x,t)$ represent the densities of prey and predator at the location $x$ and time $t$, respectively, $d_{11} \geq 0$ and $d_{22} \geq 0$ are the random diffusion coefficients, $d_{21} \geq 0$ is the memory-based diffusion coefficient, $v_{0}(x)$ is also the initial function, and $f\left(u(x,t),v(x,t)\right)$ and $g\left(u(x,t),v(x,t)\right)$ are the reaction terms. Furthermore, they derived an algorithm for calculating the normal form of Hopf bifurcation in the system (1.2). By numerical simulations, the stable spatially inhomogeneous periodic solutions and the transition from the unstable mode-2 spatially inhomogeneous periodic solution to the stable mode-1 spatially inhomogeneous periodic solution are found.

For the general predator-prey models in ecology, in addition to random diffusion of the predator and prey populations, the spatial movement of predator and prey populations also occurs, which is usually shown by the predator pursuing prey and prey escaping from predator \cite{lv13}. The pursuit and evasion between the predator and prey populations also have a strong impact on the movement pattern of the predator and prey populations \cite{lv14,lv15,lv16}. Furthermore, by noticing that such movement is not random but directed, i.e., predator moves toward the gradient direction of prey distribution, which is called prey-taxis, or prey moves opposite to the gradient of predator distribution, which is called predator-taxis. Recently, the predator-prey model with prey-taxis \cite{lv17,lv18,lv19,lv20,lv21}, the predator-prey model with indirect prey-taxis \cite{lv22,lv23,lv24}, the predator-prey model with predator-taxis \cite{lv25} and the predator-prey model with indirect predator-taxis \cite{lv26} have been researched. Especially, Wang et al. \cite{lv13} considered the following diffusive predator-prey model with both predator-taxis and prey-taxis. The corresponding mathematical model is
\begin{equation}\left\{\begin{aligned}
&\frac{\partial u(x,t)}{\partial t}=d\Delta u(x,t)+\xi \left(u(x,t)v_{x}(x,t)\right)_{x}+f(u(x,t),v(x,t)), & x \in \Omega,~t>0, \\
&\frac{\partial v(x,t)}{\partial t}=\Delta v(x,t)-\eta \left(v(x,t)u_{x}(x,t)\right)_{x}+g(u(x,t),v(x,t)), & x \in \Omega,~t>0, \\
&\frac{\partial u(x,t)}{\partial \mathbf{n}}=\frac{\partial v(x,t)}{\partial \mathbf{n}}=0, & x \in \partial\Omega,~t>0, \\
&u(x,0)=u_{0}(x)\geq 0,~v(x,0)=v_{0}(x)\geq 0, & x \in \Omega,
\end{aligned}\right.\end{equation}
where $u_{0}(x)$ is the initial function, $d$ is the rescaled diffusion coefficient for the prey population, and the diffusion coefficient of the predator population is rescaled as 1. Furthermore, the term $\xi \left(u(x,t)v_{x}(x,t)\right)_{x}$ represents the prey moves away from predator, and $\xi \geq 0$ is the intrinsic predator-taxis rate. The term $-\eta \left(v(x,t)u_{x}(x,t)\right)_{x}$ represents the predator moves towards prey, and $\eta \geq 0$ is the intrinsic prey-taxis rate.

Therefore, based on the models (1.2) and (1.3), and by considering that the spatial memory and predator-taxis, we proposed the following diffusive predator-prey model with spatial memory and predator-taxis subjects to homogeneous Neumann boundary condition
\begin{equation}\left\{\begin{aligned}
&\frac{\partial u(x,t)}{\partial t}=d_{11}\Delta u(x,t)+\xi\left(u(x,t)v_{x}(x,t)\right)_{x}+f(u(x,t),v(x,t)), & x\in(0,\ell\pi),~t>0, \\
&\frac{\partial v(x,t)}{\partial t}=d_{22}\Delta v(x,t)-d_{21}\left(v(x,t)u_{x}(x,t-\tau)\right)_{x}+g(u(x,t),v(x,t)), & x\in(0,\ell\pi),~t>0, \\
&u_{x}(0,t)=u_{x}(\ell\pi,t)=v_{x}(0,t)=v_{x}(\ell\pi,t)=0, & t \geq 0, \\
&u(x,t)=u_{0}(x,t), & x \in (0,\ell\pi),~-\tau \leq t \leq 0, \\
&v(x,t)=v_{0}(x), & x \in (0,\ell\pi).
\end{aligned}\right.\end{equation}

This paper is organized as follows. In Section 2, we derive an algorithm for calculating the normal form of Hopf bifurcation in the model (1.4). In Section 3, we obtain the normal form of Hopf bifurcation truncated to the third-order term by using our newly developed algorithm developed in Sec.2, and the mathematical expressions of the corresponding coefficients are given. In Section 4, we consider the diffusive Holling-Tanner model with spatial memory and predator-taxis. The stability and Hopf bifurcation analysis in this model are studied, and some numerical simulations are also given. In Section 5, we give a brief conclusion and discussion.

\section{Algorithm for calculating the normal form of Hopf bifurcation in the system (1.4)}
\label{sec:2}

\subsection{Characteristic equation at the positive constant steady state}

Define the real-valued Sobolev space
\begin{equation*}
X:=\left\{(u,v)^{T} \in \left(W^{2,2}(0,\ell\pi)\right)^{2}:\frac{\partial u}{\partial x}=\frac{\partial v}{\partial x}=0 \text { at } x=0, \ell\pi\right\}
\end{equation*}
with the inner product defined by
\begin{equation*}
\left[U_{1},U_{2}\right]=\int_{0}^{\ell\pi} U_{1}^{T}U_{2}~dx \text { for } U_{1}=\left(u_{1},v_{1}\right)^{T} \in X \text{ and } U_{2}=\left(u_{2},v_{2}\right)^{T} \in X,
\end{equation*}
where the symbol $T$ represents the transpose of vector, and let $\mathcal{C}:=C([-1,0];X)$ be the Banach space of continuous mappings from $[-1,0]$ to $X$ with the sup norm. It is well known that the eigenvalue problem
\begin{equation*}\left\{\begin{aligned}
&\widetilde{\varphi}^{\prime \prime}(x)=\widetilde{\lambda}\widetilde{\varphi}(x),~x \in(0,\ell\pi), \\
&\widetilde{\varphi}^{\prime}(0)=\widetilde{\varphi}^{\prime}(\ell\pi)=0
\end{aligned}\right.\end{equation*}
has eigenvalues $\widetilde{\lambda}_{n}=-n^{2}/\ell^{2}$ with corresponding normalized eigenfunctions
\begin{equation}
\beta_{n}^{(j)}=\gamma_{n}(x)e_{j},~\gamma_{n}(x)=\frac{\cos(nx/\ell)}{\left\|\cos(nx/\ell)\right\|_{L^{2}}}=\left\{\begin{aligned}
&\frac{1}{\sqrt{\ell\pi}}, & n=0, \\
&\frac{\sqrt{2}}{\sqrt{\ell\pi}}\cos\left(\frac{nx}{\ell}\right), & n \geq 1,
\end{aligned}\right.
\end{equation}
where $e_{j},~j=1,2$ is the unit coordinate vector of $\mathbb{R}^{2}$, and $n \in \mathbb{N}_{0}=\mathbb{N}\cup \left\{0\right\}$ is often called wave number, $\mathbb{N}_{0}$ is the set of all non-negative integers, $\mathbb{N}=\left\{1,2,...\right\}$ represents the set of all positive integers.

Without loss of generality, we assume that $E_{*}\left(u_{*},v_{*}\right)$ is the positive constant steady state of system (1.4). The linearized equation of (1.4) at $E_{*}\left(u_{*},v_{*}\right)$ is
\begin{equation}
\left(\begin{aligned}
&\frac{\partial u(x,t)}{\partial t} \\
&\frac{\partial v(x,t)}{\partial t}
\end{aligned}\right)=D_{1}\left(\begin{aligned}
&\Delta u(x,t) \\
&\Delta v(x,t)
\end{aligned}\right)+D_{2}\left(\begin{aligned}
&\Delta u(x,t-\tau) \\
&\Delta v(x,t-\tau)
\end{aligned}\right)+A\left(\begin{aligned}
&u(x,t) \\
&v(x,t)
\end{aligned}\right),
\end{equation}
where
\begin{equation}
D_{1}=\left(\begin{array}{cc}
d_{11} & \xi u_{*} \\
0 & d_{22}
\end{array}\right),~D_{2}=\left(\begin{array}{cc}
0 & 0 \\
-d_{21}v_{*} & 0
\end{array}\right),~A=\left(\begin{array}{cc}
a_{11} & a_{12} \\
a_{21} & a_{22}
\end{array}\right)
\end{equation}
and
\begin{equation}
a_{11}=\frac{\partial f\left(u_{*},v_{*}\right)}{\partial u},~a_{12}=\frac{\partial f\left(u_{*},v_{*}\right)}{\partial v},~a_{21}=\frac{\partial g\left(u_{*},v_{*}\right)}{\partial u},~a_{22}=\frac{\partial g\left(u_{*},v_{*}\right)}{\partial v}.
\end{equation}

Therefore, the characteristic equation of (2.2) is
\begin{equation*}
\prod_{n \in \mathbb{N}_{0}}\Gamma_{n}(\lambda)=0,
\end{equation*}
where $\Gamma_{n}(\lambda)=\det\left(\mathcal{M}_{n}(\lambda)\right)$ with
\begin{equation}
\mathcal{M}_{n}(\lambda)=\lambda I_{2}+\frac{n^{2}}{\ell^{2}}D_{1}+\frac{n^{2}}{\ell^{2}}e^{-\lambda\tau}D_{2}-A.
\end{equation}
Here, $\det(.)$ represents the determinant of a matrix, $I_{2}$ is the identity matrix of $2 \times 2$, and $D_{1}, D_{2}, A$ are defined by (2.3). Then we can obtain
\begin{equation}
\Gamma_{n}(\lambda)=\operatorname{det}\left(\mathcal{M}_{n}(\lambda)\right)=\lambda^{2}-T_{n}\lambda+\widetilde{J}_{n}(\tau)=0,
\end{equation}
where
\begin{equation}\begin{aligned}
T_{n}&=\operatorname{Tr}(A)-\operatorname{Tr}\left(D_{1}\right)\frac{n^{2}}{\ell^{2}}, \\
\widetilde{J}_{n}(\tau)&=(d_{11}d_{22}+d_{21}\xi u_{*}v_{*}e^{-\lambda\tau})\frac{n^{4}}{\ell^{4}}-\left(d_{11}a_{22}+d_{22}a_{11}-a_{21}\xi u_{*}+d_{21}v_{*}a_{12}e^{-\lambda\tau}\right)\frac{n^{2}}{\ell^{2}}+\operatorname{Det}(A)
\end{aligned}\end{equation}
with $\operatorname{Tr}(A)=a_{11}+a_{22}$, $\operatorname{Tr}\left(D_{1}\right)=d_{11}+d_{22}$ and $\operatorname{Det}(A)=a_{11}a_{22}-a_{12}a_{21}$.

\subsection{Basic assumption and equation transformation}

\begin{assumption}
Assume that at $\tau=\tau_{c}$, (2.6) has a pair of purely imaginary roots $\pm i \omega_{n_{c}}$ with $\omega_{n_{c}}>0$ for $n=n_{c} \in \mathbb{N}$ and all other eigenvalues have negative real parts. Let $\lambda(\tau)=\alpha_{1}(\tau) \pm i \alpha_{2}(\tau)$ be a pair of roots of (2.6) near $\tau=\tau_{c}$ satisfying $\alpha_{1}(\tau_{c})=0$ and $\alpha_{2}(\tau_{c})=\omega_{n_{c}}$. In addition, the corresponding transversality condition holds.
\end{assumption}

Let $\tau=\tau_{c}+\mu,~|\mu| \ll 1$ such that $\mu=0$ corresponds to the Hopf bifurcation value for system (1.4). Moreover, we shift $E_{*}(u_{*},v_{*})$ to the origin by setting
\begin{equation*}
U(x,t)=\left(U_{1}(x,t),U_{2}(x,t)\right)^{T}=\left(u(x,t),v(x,t)\right)^{T}-\left(u_{*},v_{*}\right)^{T},
\end{equation*}
and normalize the delay by rescaling the time variable $t \rightarrow t/\tau$. Furthermore, we rewrite $U(t)$ for $U(x,t)$, and $U_{t} \in \mathcal{C}$ for $U_{t}(\theta)=U(x,t+\theta),~-1 \leq \theta \leq 0$. Then the system (1.4) becomes the compact form
\begin{equation}
\frac{dU(t)}{dt}=d(\mu)\Delta(U_{t})+L(\mu)(U_{t})+F(U_{t},\mu),
\end{equation}
where for $\varphi=\left(\varphi^{(1)},\varphi^{(2)}\right)^{T} \in \mathcal{C}$, $d(\mu)\Delta$ is given by
\begin{equation*}
d(\mu)\Delta(\varphi)=d_{0}\Delta(\varphi)+F^{d}(\varphi,\mu)
\end{equation*}
with
\begin{equation}\begin{aligned}
d_{0}\Delta(\varphi)&=\tau_{c}D_{1}\Delta\varphi(0)+\tau_{c}D_{2}\Delta\varphi(-1), \\
F^{d}(\varphi, \mu)&=\left(\begin{array}{c}
\xi(\tau_{c}+\mu)(\varphi_{x}^{(1)}(0)\varphi_{x}^{(2)}(0)+\varphi^{(1)}(0)\varphi_{xx}^{(2)}(0)) \\
-d_{21}(\tau_{c}+\mu)(\varphi_{x}^{(1)}(-1)\varphi_{x}^{(2)}(0)+\varphi_{xx}^{(1)}(-1)\varphi^{(2)}(0))
\end{array}\right) \\
&+\mu\left(\begin{array}{c}
d_{11}\varphi_{xx}^{(1)}(0)+\xi u_{*}\varphi_{xx}^{(2)}(0) \\
-d_{21}v_{*}\varphi_{xx}^{(1)}(-1)+d_{22}\varphi_{xx}^{(2)}(0)
\end{array}\right).
\end{aligned}\end{equation}

Furthermore, $L(\mu): \mathcal{C} \rightarrow X$ is given by
\begin{equation}
L(\mu)(\varphi)=\left(\tau_{c}+\mu\right)A\varphi(0),
\end{equation}
and $F: \mathcal{C} \times \mathbb{R}^{2} \rightarrow X$ is given by
\begin{equation}
F(\varphi,\mu)=\left(\tau_{c}+\mu\right)\left(\begin{aligned}
f\left(\varphi^{(1)}(0)+u_{*},\varphi^{(2)}(0)+v_{*}\right) \\
g\left(\varphi^{(1)}(0)+u_{*},\varphi^{(2)}(0)+v_{*}\right)
\end{aligned}\right)-L(\mu)(\varphi).
\end{equation}

In what follows, we assume that $F(\varphi,\mu)$ is $C^{k}(k \geq 3)$ function, which is smooth with respect to $\varphi$ and $\mu$. Notice that $\mu$ is the perturbation parameter and is treated as a variable in the calculation of normal form. Moreover, by (2.10), if we denote $L_{0}(\varphi)=\tau_{c}A\varphi(0)$, then (2.8) can be rewritten as
\begin{equation}
\frac{dU(t)}{dt}=d_{0}\Delta(U_{t})+L_{0}(U_{t})+\widetilde{F}\left(U_{t},\mu\right),
\end{equation}
where the linear and nonlinear terms are separated, and
\begin{equation}
\widetilde{F}(\varphi, \mu)=\mu A \varphi(0)+F(\varphi, \mu)+F^{d}(\varphi, \mu).
\end{equation}

Thus, the linearized equation of (2.12) can be written as
\begin{equation}
\frac{dU(t)}{dt}=d_{0}\Delta(U_{t})+L_{0}(U_{t}).
\end{equation}

Moreover, the characteristic equation for the linearized equation (2.14) is
\begin{equation}
\prod_{n \in \mathbb{N}_{0}}\widetilde{\Gamma}_{n}(\lambda)=0,
\end{equation}
where $\widetilde{\Gamma}_{n}(\lambda)=\operatorname{det}\left(\widetilde{\mathcal{M}}_{n}(\lambda)\right)$ with
\begin{equation}
\widetilde{\mathcal{M}}_{n}(\lambda)=\lambda I_{2}+\tau_{c}\frac{n^{2}}{\ell^{2}}D_{1}+\tau_{c}\frac{n^{2}}{\ell^{2}}e^{-\lambda}D_{2}-\tau_{c}A.
\end{equation}

By comparing (2.16) with (2.5), we know that (2.15) has a pair of purely imaginary roots $\pm i \omega_{c}$ for $n=n_{c} \in \mathbb{N}$, and all other eigenvalues have negative real parts, where $\omega_{c}=\tau_{c}\omega_{n_{c}}$. In order to write (2.12) as an abstract ordinary differential equation in a Banach space, follows by \cite{lv27}, we can take the enlarged space
\begin{equation*}
\mathcal{BC}:=\left\{\widetilde{\psi}:[-1,0]\rightarrow X:\widetilde{\psi} \text{ is continuous on } [-1,0), \exists \lim _{\theta \rightarrow 0^{-}} \widetilde{\psi}(\theta)\in X\right\},
\end{equation*}
then the equation (2.12) is equivalent to an abstract ordinary differential equation on $\mathcal{BC}$
\begin{equation*}
\frac{dU_{t}}{dt}=\widetilde{A}U_{t}+X_{0}\widetilde{F}\left(U_{t},\mu\right).
\end{equation*}
Here, $\widetilde{A}$ is a operator from $\mathcal{C}_{0}^{1}=\{\varphi \in \mathcal{C}:\dot{\varphi} \in \mathcal{C},\varphi(0) \in \operatorname{dom}(\Delta)\}$ to $\mathcal{BC}$, which is defined by
\begin{equation*}
\widetilde{A}\varphi=\dot{\varphi}+X_{0}\left(\tau_{c}D_{1}\Delta\varphi(0)+\tau_{c}D_{2}\Delta\varphi(-1)+L_{0}(\varphi)-\dot{\varphi}(0)\right),
\end{equation*}
and $X_{0}=X_{0}(\theta)$ is given by
\begin{equation*}
X_{0}(\theta)=\left\{\begin{aligned}
& 0, & -1 \leq \theta<0, \\
& 1, & \theta=0.
\end{aligned}\right.
\end{equation*}

In the following, the method given in \cite{lv27} is used to complete the decomposition of $\mathcal{BC}$. Let $C:=C\left([-1,0],\mathbb{R}^{2}\right),~C^{*}:=C\left([0,1],\mathbb{R}^{2*}\right)$, where $\mathbb{R}^{2*}$ is the two-dimensional space of row vectors, and define the adjoint bilinear form on $C^{*}\times C$ as follows
\begin{equation*}
\langle\Psi(s),\Phi(\theta)\rangle=\Psi(0)\Phi(0)-\int_{-1}^{0}\int_{0}^{\theta}\Psi(\xi-\theta)dM_{n}(\theta)\Phi(\xi)d\xi
\end{equation*}
for $\Psi \in C^{*},~\Phi \in C$ and $\xi \in [-1,0]$, where $M_{n}(\theta)$ is a bounded variation function from $[-1,0]$ to $\mathbb{R}^{2 \times 2}$, i.e., $M_{n}(\theta)\in BV\left([-1,0];\mathbb{R}^{2 \times 2}\right)$, such that for $\Phi(\theta)\in\mathcal{C}$, one has
\begin{equation*}
-\tau_{c}\frac{n^{2}}{\ell^{2}}D_{1}\Phi(0)-\tau_{c}\frac{n^{2}}{\ell^{2}}D_{2}\Phi(-1)+L_{0}(\Phi(\theta))=\int_{-1}^{0}dM_{n}(\theta)\Phi(\theta).
\end{equation*}

By choosing
\begin{equation*}
\Phi(\theta)=(\phi(\theta),\overline{\phi}(\theta)),~\Psi(s)=\operatorname{col}\left(\psi^{T}(s),\overline{\psi}^{T}(s)\right),
\end{equation*}
where the $\operatorname{col}(.)$ represents the column vector, $\phi(\theta)=\operatorname{col}\left(\phi_{1}(\theta),\phi_{2}(\theta)\right)=\phi e^{i \omega_{c} \theta}\in \mathbb{C}^{2}$ with $\phi=\operatorname{col}\left(\phi_{1},\phi_{2}\right)$ is the eigenvector of (2.14) associated with the eigenvalue $i \omega_{c}$, and $\psi(s)=\operatorname{col}\left(\psi_{1}(s),\psi_{2}(s)\right)=\psi e^{-i\omega_{c}s}\in \mathbb{C}^{2}$ with $\psi=\operatorname{col}\left(\psi_{1},\psi_{2}\right)$ is the corresponding adjoint eigenvector such that
\begin{equation*}
\left\langle\Psi(s), \Phi(\theta)\right\rangle=I_{2},
\end{equation*}
where
\begin{equation*}
\phi=\left(\begin{array}{c}
1 \\
\frac{a_{11}-i\omega_{n_{c}}-d_{11}(n_{c}^{2}/\ell^{2})}{\xi u_{*}(n_{c}^{2}/\ell^{2})-a_{12}}
\end{array}\right),~\psi=\eta\left(\begin{array}{c}
1 \\
\frac{a_{12}-\xi u_{*}(n_{c}^{2}/\ell^{2})}{i\omega_{n_{c}}+d_{22}(n_{c}^{2}/\ell^{2})-a_{22}}
\end{array}\right)
\end{equation*}
and
\begin{equation*}
\eta=\frac{i\omega_{n_{c}}+\left(n_{c}/\ell\right)^{2}d_{22}-a_{22}}{2i\omega_{n_{c}}+\left(n_{c}/\ell\right)^{2}d_{11}-a_{11}+\left(n_{c}/\ell\right)^{2}d_{22}-a_{22}+\tau_{c}a_{12} d_{21}v_{*}\left(n_{c}/\ell\right)^{2}e^{-i\omega_{c}}}.
\end{equation*}

According to \cite{lv27}, the phase space $\mathcal{C}$ can be decomposed as
\begin{equation*}
\mathcal{C}=\mathcal{P} \oplus \mathcal{Q},~\mathcal{P}=\operatorname{Im} \pi,~\mathcal{Q}=\operatorname{Ker} \pi,
\end{equation*}
where for $\widetilde{\phi} \in \mathcal{C}$, the projection $\pi: \mathcal{C} \rightarrow \mathcal{P}$ is defined by
\begin{equation}
\pi(\widetilde{\phi})=\left(\Phi\left\langle\Psi,\left(\begin{aligned}
&\left[\widetilde{\phi}(\cdot),\beta_{n_{c}}^{(1)}\right] \\
&\left[\widetilde{\phi}(\cdot),\beta_{n_{c}}^{(2)}\right]
\end{aligned}\right)\right\rangle\right)^{T} \beta_{n_{c}}.
\end{equation}

Therefore, by following the method in \cite{lv27}, $\mathcal{BC}$ can be divided into a direct sum of center subspace and its complementary space, that is
\begin{equation}
\mathcal{BC}=\mathcal{P} \oplus \operatorname{ker} \pi,
\end{equation}
where $\operatorname{dim}\mathcal{P}=2$. It is easy to see that the projection $\pi$ which is defined by (2.17), is extended to a continuous projection (which is still denoted by $\pi$), that is, $\pi:\mathcal{BC}\mapsto \mathcal{P}$. In particular, for $\alpha \in \mathcal{C}$, we have
\begin{equation}
\pi\left(X_{0}(\theta)\alpha\right)=\left(\Phi(\theta)\Psi(0)\left(\begin{aligned}
&\left[\alpha,\beta_{n_{c}}^{(1)}\right] \\
&\left[\alpha,\beta_{n_{c}}^{(2)}\right]
\end{aligned}\right)\right)^{T} \beta_{n_{c}}.
\end{equation}

By combining with (2.18) and (2.19), $U_{t}(\theta)$ can be decomposed as
\begin{equation}\begin{aligned}
U_{t}(\theta)&=\left(\Phi(\theta)\left(\begin{array}{c}
z_{1} \\
z_{2}
\end{array}\right)\right)^{T}\left(\begin{array}{c}
\beta_{n_{c}}^{(1)} \\
\beta_{n_{c}}^{(2)}
\end{array}\right)+w=\left(z_{1}\phi e^{i\omega_{c}\theta}+z_{2}\overline{\phi} e^{-i\omega_{c}\theta}\right)\gamma_{n_{c}}(x)+w \\
&=\left(\phi(\theta)~~\overline{\phi}(\theta)\right)\left(\begin{array}{c}
z_{1}\gamma_{n_{c}}(x) \\
z_{2}\gamma_{n_{c}}(x)
\end{array}\right)+\left(\begin{array}{c}
w_{1} \\
w_{2}
\end{array}\right),
\end{aligned}\end{equation}
where $w=\operatorname{col}\left(w_{1},w_{2}\right)$ and
\begin{equation*}\begin{aligned}
&\left(\begin{array}{c}
z_{1} \\
z_{2}
\end{array}\right)=\left\langle\Psi(0),\left(\begin{aligned}
&\left[U_{t}(\theta),\beta_{n_{c}}^{(1)}\right] \\
&\left[U_{t}(\theta),\beta_{n_{c}}^{(2)}\right]
\end{aligned}\right)\right\rangle.
\end{aligned}\end{equation*}

If we assume that
\begin{equation*}
\Phi(\theta)=\left(\phi(\theta),\overline{\phi}(\theta)\right),~z_{x}=\left(z_{1}\gamma_{n_{c}}(x),z_{2}\gamma_{n_{c}}(x)\right)^{T},
\end{equation*}
then (2.20) can be rewritten as
\begin{equation}
U_{t}(\theta)=\Phi(\theta)z_{x}+w \text{ with } w \in \mathcal{C}_{0}^{1} \cap \operatorname{Ker} \pi:=\mathcal{Q}^{1}.
\end{equation}

Then by combining with (2.21), the system (2.12) is decomposed as a system of abstract ordinary differential equations (ODEs) on $\mathbb{R}^{2} \times \text{Ker}~\pi$, with finite and infinite dimensional variables are separated in the linear term. That is
\begin{equation}
\left\{\begin{aligned}
&\dot{z}=Bz+\Psi(0)\left(\begin{aligned}
&\left[\widetilde{F}\left(\Phi(\theta)z_{x}+w,\mu\right),\beta_{n_{c}}^{(1)}\right] \\
&\left[\widetilde{F}\left(\Phi(\theta)z_{x}+w,\mu\right),\beta_{n_{c}}^{(2)}\right]
\end{aligned}\right), \\
&\dot{w}=A_{\mathcal{Q}^{1}}w+(I-\pi)X_{0}(\theta)\widetilde{F}\left(\Phi(\theta)z_{x}+w,\mu\right),
\end{aligned}\right.
\end{equation}
where $I$ is the identity matrix, $z=\left(z_{1},z_{2}\right)^{T}$, $B=\text{diag}\left\{i\omega_{c},-i\omega_{c}\right\}$ is the diagonal matrix, and $A_{\mathcal{Q}^{1}}:\mathcal{Q}^{1} \rightarrow \operatorname{Ker}\pi$ is defined by
\begin{equation*}
A_{\mathcal{Q}^{1}}w=\dot{w}+X_{0}(\theta)\left(\tau_{c}D_{1}\Delta w(0)+\tau_{c}D_{2}\Delta w(-1)+L_{0}(w)-\dot{w}(0)\right).
\end{equation*}

Consider the formal Taylor expansion
\begin{equation*}
\widetilde{F}(\varphi,\mu)=\sum_{j \geq 2} \frac{1}{j!}\widetilde{F}_{j}(\varphi,\mu),~F(\varphi,\mu)=\sum_{j \geq 2}\frac{1}{j!} F_{j}(\varphi,\mu),~F^{d}(\varphi,\mu)=\sum_{j \geq 2}\frac{1}{j!}F_{j}^{d}(\varphi,\mu).
\end{equation*}
From (2.13), we have
\begin{equation}
\widetilde{F}_{2}(\varphi,\mu)=2\mu A\varphi(0)+F_{2}(\varphi,\mu)+F_{2}^{d}(\varphi,\mu)
\end{equation}
and
\begin{equation}
\widetilde{F}_{j}(\varphi,\mu)=F_{j}(\varphi,\mu)+F_{j}^{d}(\varphi,\mu),~j=3,4,\cdots.
\end{equation}

By combining with (2.19), the system (2.22) can be rewritten as
\begin{equation*}
\left\{\begin{aligned}
&\dot{z}=Bz+\sum_{j \geq 2}\frac{1}{j!}f_{j}^{1}(z,w,\mu), \\
&\dot{w}=A_{\mathcal{Q}^{1}}w+\sum_{j \geq 2}\frac{1}{j!}f_{j}^{2}(z,w,\mu),
\end{aligned}\right.
\end{equation*}
where
\begin{equation}\begin{aligned}
&f_{j}^{1}(z,w,\mu)=\Psi(0)\left(\begin{aligned}
&\left[\widetilde{F}_{j}\left(\Phi(\theta)z_{x}+w,\mu\right),\beta_{n_{c}}^{(1)}\right] \\
&\left[\widetilde{F}_{j}\left(\Phi(\theta)z_{x}+w,\mu\right),\beta_{n_{c}}^{(2)}\right]
\end{aligned}\right), \\
&f_{j}^{2}(z,w,\mu)=(I-\pi)X_{0}(\theta)\widetilde{F}_{j}\left(\Phi(\theta)z_{x}+w,\mu\right).
\end{aligned}\end{equation}

In terms of the normal form theory of partial functional differential equations \cite{lv27}, after a recursive transformation of variables of the form
\begin{equation}
(z,w)=(\widetilde{z},\widetilde{w})+\frac{1}{j!}\left(U_{j}^{1}(\widetilde{z},\mu),U_{j}^{2}(\widetilde{z},\mu)\right),~j \geq 2,
\end{equation}
where $z, \widetilde{z} \in \mathbb{R}^{2}$, $w, \widetilde{w} \in \mathcal{Q}^{1}$ and $U_{j}^{1}:\mathbb{R}^{3} \rightarrow \mathbb{R}^{2}$, $U_{j}^{2}: \mathbb{R}^{3} \rightarrow \mathcal{Q}^{1}$ are homogeneous polynomials of degree $j$ in $\widetilde{z}$ and $\mu$, a locally center manifold for (2.12) satisfies $w=0$ and the flow on it is given by the two-dimensional ODEs
\begin{equation*}
\dot{z}=B z+\sum_{j \geq 2} \frac{1}{j!} g_{j}^{1}(z,0,\mu),
\end{equation*}
which is the normal form as in the usual sense for ODEs.

By following \cite{lv27} and \cite{lv28}, we have
\begin{equation}
g_{2}^{1}(z,0,\mu)=\operatorname{Proj}_{\operatorname{Ker}\left(M_{2}^{1}\right)}f_{2}^{1}(z,0,\mu)
\end{equation}
and
\begin{equation}
g_{3}^{1}(z,0,\mu)=\operatorname{Proj}_{\operatorname{Ker}\left(M_{3}^{1}\right)}\widetilde{f}_{3}^{1}(z,0,\mu)=\operatorname{Proj}_{S} \widetilde{f}_{3}^{1}(z,0,0)+O\left(\mu^{2}|z|\right),
\end{equation}
where $\operatorname{Proj}_{p}(q)$ represents the projection of $q$ on $p$, and $\widetilde{f}_{3}^{1}(z,0,\mu)$ is vector and its element is the cubic polynomial of $(z, \mu)$ after the variable transformation of (2.26), and it is determined by (2.38),
\begin{equation}\begin{aligned}
&\operatorname{Ker}\left(M_{2}^{1}\right)=\operatorname{Span}\left\{\left(\begin{array}{c}
\mu z_{1} \\
0
\end{array}\right),\left(\begin{array}{c}
0 \\
\mu z_{2}
\end{array}\right)\right\}, \\
&\operatorname{Ker}\left(M_{3}^{1}\right)=\operatorname{Span}\left\{\left(\begin{array}{c}
z_{1}^{2}z_{2} \\
0
\end{array}\right),\left(\begin{array}{c}
\mu^{2}z_{1} \\
0
\end{array}\right),\left(\begin{array}{c}
0 \\
z_{1}z_{2}^{2}
\end{array}\right),\left(\begin{array}{c}
0 \\
\mu^{2}z_{2}
\end{array}\right)\right\},
\end{aligned}\end{equation}
and
\begin{equation}
S=\operatorname{Span}\left\{\left(\begin{array}{c}
z_{1}^{2}z_{2} \\
0
\end{array}\right),\left(\begin{array}{c}
0 \\
z_{1}z_{2}^{2}
\end{array}\right)\right\}.
\end{equation}
In the following, for notational convenience, we let
\begin{equation*}
\mathcal{H}\left(\alpha z_{1}^{q_{1}}z_{2}^{q_{2}}\mu\right)=\left(\begin{array}{c}
\alpha z_{1}^{q_{1}}z_{2}^{q_{2}}\mu \\
\overline{\alpha}z_{1}^{q_{2}}z_{2}^{q_{1}}\mu
\end{array}\right),~\alpha \in \mathbb{C}.
\end{equation*}
We then calculate $g_{j}^{1}(z,0,\mu),~j=2,3$ step by step.

\subsection{Algorithm for calculating the normal form of Hopf bifurcation}

\subsubsection{Calculation of $g_{2}^{1}(z,0,\mu)$}

From the second mathematical expression in (2.9), we have
\begin{equation}
F_{2}^{d}(\varphi,\mu)=F_{20}^{d}(\varphi)+\mu F_{21}^{d}(\varphi)
\end{equation}
and
\begin{equation}
F_{3}^{d}(\varphi,\mu)=\mu F_{31}^{d}(\varphi),~F_{j}^{d}(\varphi,\mu)=(0,0)^{T},~j=4,5,\cdots,
\end{equation}
where
\begin{equation}
\left\{\begin{aligned}
&F_{20}^{d}(\varphi)=2\left(\begin{array}{c}
\xi\tau_{c}\left(\varphi_{x}^{(1)}(0)\varphi_{x}^{(2)}(0)+\varphi^{(1)}(0)\varphi_{xx}^{(2)}(0)\right) \\
-d_{21}\tau_{c}\left(\varphi_{x}^{(1)}(-1)\varphi_{x}^{(2)}(0)+\varphi_{xx}^{(1)}(-1)\varphi^{(2)}(0)\right)
\end{array}\right), \\
&F_{21}^{d}(\varphi)=2D_{1}\Delta\varphi(0)+2D_{2}\Delta\varphi(-1), \\
&F_{31}^{d}(\varphi)=6\left(\begin{array}{c}
\xi\mu\left(\varphi_{x}^{(1)}(0)\varphi_{x}^{(2)}(0)+\varphi^{(1)}(0)\varphi_{xx}^{(2)}(0)\right) \\
-d_{21}\mu\left(\varphi_{x}^{(1)}(-1)\varphi_{x}^{(2)}(0)+\varphi_{xx}^{(1)}(-1)\varphi^{(2)}(0)\right)
\end{array}\right).
\end{aligned}\right.
\end{equation}

Furthermore, it is easy to verify that
\begin{equation}\begin{aligned}
&\left(\begin{aligned}
&\left[2\mu A(\Phi(0)z_{x}),\beta_{n_{c}}^{(1)}\right] \\
&\left[2\mu A(\Phi(0)z_{x}),\beta_{n_{c}}^{(2)}\right]
\end{aligned}\right)=2\mu A\left(\Phi(0)\left(\begin{array}{c}
z_{1} \\
z_{2}
\end{array}\right)\right), \\
&\left(\begin{aligned}
&\left[\mu F_{21}^{d}\left(\Phi(\theta)z_{x}\right),\beta_{n_{c}}^{(1)}\right] \\
&\left[\mu F_{21}^{d}\left(\Phi(\theta)z_{x}\right),\beta_{n_{c}}^{(2)}\right]
\end{aligned}\right)=-\frac{2n_{c}^{2}}{\ell^{2}}\mu\left(D_{1}\left(\Phi(0)\left(\begin{array}{c}
z_{1} \\
z_{2}
\end{array}\right)\right)+D_{2}\left(\Phi(-1)\left(\begin{array}{c}
z_{1} \\
z_{2}
\end{array}\right)\right)\right).
\end{aligned}\end{equation}

From (2.11), we have $F_{2}\left(\Phi(\theta)z_{x},\mu\right)=F_{2}\left(\Phi(\theta)z_{x},0\right)$ for all $\mu \in \mathbb{R}$. It follows from the first mathematical expression in (2.25) that
\begin{equation*}
f_{2}^{1}(z, 0, \mu)=\Psi(0)\left(\begin{aligned}
&\left[\widetilde{F}_{2}\left(\Phi(\theta)z_{x},\mu\right),\beta_{n_{c}}^{(1)}\right] \\
&\left[\widetilde{F}_{2}\left(\Phi(\theta)z_{x},\mu\right),\beta_{n_{c}}^{(2)}\right]
\end{aligned}\right).
\end{equation*}

This, together with (2.23), (2.27), (2.29), (2.31), (2.32), (2.33) and (2.34), yields to
\begin{equation}
g_{2}^{1}(z,0,\mu)=\operatorname{Proj}_{\operatorname{Ker}\left(M_{2}^{1}\right)}f_{2}^{1}(z,0,\mu)=\mathcal{H}\left(B_{1}\mu z_{1}\right),
\end{equation}
where
\begin{equation}
B_{1}=2\psi^{T}(0)\left(A\phi(0)-\frac{n_{c}^{2}}{\ell^{2}}\left(D_{1}\phi(0)+D_{2}\phi(-1)\right)\right).
\end{equation}

\subsubsection{Calculation of $g_{3}^{1}(z,0,\mu)$}

In this subsection, we calculate the third term $g_{3}^{1}(z,0,\mu)$ in terms of (2.28). Denote
\begin{equation}\begin{aligned}
&f_{2}^{(1,1)}(z,w,0)=\Psi(0)\left(\begin{aligned}
&\left[F_{2}\left(\Phi(\theta)z_{x}+w,0\right),\beta_{n_{c}}^{(1)}\right] \\
&\left[F_{2}\left(\Phi(\theta)z_{x}+w,0\right),\beta_{n_{c}}^{(2)}\right]
\end{aligned}\right), \\
&f_{2}^{(1,2)}(z,w,0)=\Psi(0)\left(\begin{aligned}
&\left[F_{2}^{d}\left(\Phi(\theta)z_{x}+w,0\right),\beta_{n_{c}}^{(1)}\right] \\
&\left[F_{2}^{d}\left(\Phi(\theta)z_{x}+w,0\right),\beta_{n_{c}}^{(2)}\right]
\end{aligned}\right).
\end{aligned}\end{equation}

It follows from (2.35) that $g_{2}^{1}(z,0,0)=(0,0)^{T}$. Then $\widetilde{f}_{3}^{1}(z,0,0)$ is determined by
\begin{equation}\begin{aligned}
\widetilde{f}_{3}^{1}(z,0,0)&=f_{3}^{1}(z,0,0)+\frac{3}{2}\left((D_{z}f_{2}^{1}(z,0,0))U_{2}^{1}(z,0)+(D_{w}f_{2}^{(1,1)}(z,0,0))U_{2}^{2}(z,0)\right. \\
&~~~~~~~~~~~~~~~~~\left.+(D_{w,w_{x},w_{xx}}f_{2}^{(1,2)}(z,0,0))U_{2}^{(2,d)}(z,0)(\theta)\right),
\end{aligned}\end{equation}
where $f_{2}^{1}(z,0,0)=f_{2}^{(1,1)}(z,0,0)+f_{2}^{(1,2)}(z,0,0)$,
\begin{equation}\begin{aligned}
&D_{w,w_{x},w_{xx}}f_{2}^{(1,2)}(z,0,0)=\left(D_{w}f_{2}^{(1,2)}(z,0,0),D_{w_{x}}f_{2}^{(1,2)}(z,0,0),D_{w_{xx}}f_{2}^{(1,2)}(z,0,0)\right), \\
&U_{2}^{1}(z,0)=\left(M_{2}^{1}\right)^{-1}\operatorname{Proj}_{\operatorname{Im}\left(M_{2}^{1}\right)}f_{2}^{1}(z,0,0),~ U_{2}^{2}(z,0)(\theta)=\left(M_{2}^{2}\right)^{-1}f_{2}^{2}(z,0,0),
\end{aligned}\end{equation}
and
\begin{equation}
U_{2}^{(2,d)}(z,0)(\theta)=\operatorname{col}\left(U_{2}^{2}(z,0)(\theta),U_{2 x}^{2}(z,0)(\theta),U_{2xx}^{2}(z,0)(\theta)\right).
\end{equation}

We calculate $\operatorname{Proj}_{S}\widetilde{f}_{3}^{1}(z,0,0)$ in the following four steps.
\par~~~

\noindent {\bf{Step 1: Calculation of $\operatorname{Proj}_{S} f_{3}^{1}(z,0,0)$}}

Writing $F_{3}\left(\Phi(\theta)z_{x},0\right)$ as follows
\begin{equation}\begin{aligned}
&F_{3}\left(\Phi(\theta)z_{x},0\right)=\sum_{q_{1}+q_{2}=3}A_{q_{1}q_{2}}z_{1}^{q_{1}}z_{2}^{q_{2}}\gamma_{n_{c}}^{3}(x),
\end{aligned}\end{equation}
where $A_{q_{1}q_{2}}=\overline{A_{q_{2}q_{1}}}$ with $q_{1}, q_{2} \in \mathbb{N}_{0}$. From (2.24) and (2.32), we have $\widetilde{F}_{3}\left(\Phi(\theta) z_{x}, 0\right)=F_{3}\left(\Phi(\theta) z_{x}, 0\right)$, and thus
\begin{equation*}
\operatorname{Proj}_{S}f_{3}^{1}(z,0,0)=\mathcal{H}\left(B_{21}z_{1}^{2}z_{2}\right),
\end{equation*}
where
\begin{equation}
B_{21}=\frac{3}{2\ell\pi}\psi^{T}A_{21}.
\end{equation}
\par~~~

\noindent {\bf{Step 2: Calculation of $\operatorname{Proj}_{S}\left((D_{z}f_{2}^{1}(z,0,0))U_{2}^{1}(z,0)\right)$}}

Form (2.23) and (2.31), we have
\begin{equation}
\widetilde{F}_{2}\left(\Phi(\theta)z_{x},0\right)=F_{2}\left(\Phi(\theta)z_{x},0\right)+F_{20}^{d}\left(\Phi(\theta) z_{x}\right).
\end{equation}
By (2.11), we write
\begin{equation}\begin{aligned}
&F_{2}\left(\Phi(\theta)z_{x}+w,\mu\right)=F_{2}\left(\Phi(\theta)z_{x}+w,0\right) \\
&=\sum_{q_{1}+q_{2}=2}A_{q_{1}q_{2}}z_{1}^{q_{1}}z_{2}^{q_{2}}\gamma_{n_{c}}^{2}(x)+\mathcal{S}_{2}\left(\Phi(\theta)z_{x},w\right)+O\left(|w|^{2}\right),
\end{aligned}\end{equation}
where $\mathcal{S}_{2}\left(\Phi(\theta)z_{x},w\right)$ is the product term of $\Phi(\theta)z_{x}$ and $w$.

By (2.31) and (2.33), we write
\begin{equation}
F_{2}^{d}\left(\Phi(\theta)z_{x},0\right)=F_{20}^{d}\left(\Phi(\theta)z_{x}\right)=\frac{n_{c}^{2}}{\ell^{2}}\sum_{q_{1}+q_{2}=2}A_{q_{1}q_{2}}^{d} z_{1}^{q_{1}}z_{2}^{q_{2}}\left(\xi_{n_{c}}^{2}(x)-\gamma_{n_{c}}^{2}(x)\right),
\end{equation}
where $\xi_{n_{c}}(x)=(\sqrt{2}/\sqrt{\ell \pi})\sin \left((n_{c}/\ell)x\right)$, and
\begin{equation}\begin{aligned}
&\left\{\begin{aligned}
&A_{20}^{d}=\left(\begin{array}{c}
2\xi \tau_{c}\phi_{1}(0)\phi_{2}(0) \\
-2d_{21}\tau_{c}\phi_{1}(-1)\phi_{2}(0)
\end{array}\right)=\overline{A_{02}^{d}}, \\
&A_{11}^{d}=\left(\begin{array}{c}
4\xi \tau_{c}\operatorname{Re}\left\{\phi_{1}(0)\overline{\phi_{2}}(0)\right\} \\
-4d_{21}\tau_{c}\operatorname{Re}\left\{\phi_{1}(-1)\overline{\phi_{2}}(0)\right\}
\end{array}\right).
\end{aligned}\right.
\end{aligned}\end{equation}

From (2.1), it is easy to verify that
\begin{equation*}
\int_{0}^{\ell\pi}\gamma_{n_{c}}^{3}(x)dx=\int_{0}^{\ell\pi}\xi_{n_{2}}^{2}(x)\gamma_{n_{c}}(x)dx=0.
\end{equation*}
Then, from (2.43), (2.44) and (2.45), we have
\begin{equation}
f_{2}^{1}(z,0,0)=\Psi(0)\left(\begin{aligned}
&\left[\widetilde{F}_{2}\left(\Phi(\theta)z_{x},0\right),\beta_{n_{c}}^{(1)}\right] \\
&\left[\widetilde{F}_{2}\left(\Phi(\theta)z_{x},0\right),\beta_{n_{c}}^{(2)}\right]
\end{aligned}\right)=\left(\begin{array}{c}
0 \\
0
\end{array}\right).
\end{equation}
Thus, by combining with (2.30) and (2.47), we have
\begin{equation*}
\operatorname{Proj}_{S}\left((D_{z}f_{2}^{1}(z,0,0))U_{2}^{1}(z,0)\right)=\left(\begin{array}{c}
0 \\
0
\end{array}\right).
\end{equation*}
\par~~~

\noindent {\bf{Step 3: Calculation of $\operatorname{Proj}_{S}\left((D_{w}f_{2}^{(1,1)}(z,0,0))U_{2}^{2}(z,0)(\theta)\right)$}}

Let
\begin{equation}
U_{2}^{2}(z,0)(\theta)\triangleq h(\theta,z)=\sum_{n \in \mathbb{N}_{0}}h_{n}(\theta,z)\gamma_{n}(x),
\end{equation}
where $h_{n}(\theta,z)=\sum_{q_{1}+q_{2}=2}h_{n,q_{1}q_{2}}(\theta)z_{1}^{q_{1}}z_{2}^{q_{2}}$. Then, we have
\begin{equation*}\begin{aligned}
&\left(\begin{aligned}
\left[\mathcal{S}_{2}\left(\Phi(\theta)z_{x},\sum_{n \in \mathbb{N}_{0}} h_{n}(\theta,z)\gamma_{n}(x)\right),\beta_{n_{c}}^{(1)}\right] \\
\left[\mathcal{S}_{2}\left(\Phi(\theta)z_{x},\sum_{n \in \mathbb{N}_{0}} h_{n}(\theta,z)\gamma_{n}(x)\right),\beta_{n_{c}}^{(2)}\right]
\end{aligned}\right) \\
&=\sum_{n \in \mathbb{N}_{0}} b_{n}\left(\mathcal{S}_{2}\left(\phi(\theta)z_{1},h_{n}(\theta,z)\right)+\mathcal{S}_{2}\left(\overline{\phi}(\theta)z_{2},h_{n}(\theta,z)\right)\right),
\end{aligned}\end{equation*}
where
\begin{equation}
b_{n}=\int_{0}^{\ell\pi}\gamma_{n_{c}}^{2}(x)\gamma_{n}(x)dx=\left\{\begin{aligned}
& \frac{1}{\sqrt{\ell\pi}}, & n=0, \\
& \frac{1}{\sqrt{2\ell\pi}}, & n=2n_{c}, \\
& 0, & \text { otherwise }.
\end{aligned}\right.
\end{equation}
Hence, we have
\begin{equation*}\begin{aligned}
&(D_{w}f_{2}^{(1,1)}(z,0,0))U_{2}^{2}(z,0)(\theta) \\
&=\Psi(0)\left(\sum_{n=0,2n_{c}}b_{n}\left(\mathcal{S}_{2}\left(\phi(\theta)z_{1},h_{n}(\theta,z)\right)+\mathcal{S}_{2}\left(\overline{\phi}(\theta)z_{2},h_{n}(\theta,z)\right)\right)\right),
\end{aligned}\end{equation*}
and
\begin{equation*}
\operatorname{Proj}_{S}\left((D_{w}f_{2}^{(1,1)}(z,0,0))U_{2}^{2}(z,0)(\theta)\right)=\mathcal{H}\left(B_{22}z_{1}^{2}z_{2}\right),
\end{equation*}
where
\begin{equation}\begin{aligned}
B_{22}&=\frac{1}{\sqrt{\ell\pi}}\psi^{T}\left(\mathcal{S}_{2}\left(\phi(\theta),h_{0,11}(\theta)\right)+\mathcal{S}_{2}\left(\overline{\phi}(\theta), h_{0,20}(\theta)\right)\right) \\
&+\frac{1}{\sqrt{2\ell\pi}}\psi^{T}\left(\mathcal{S}_{2}\left(\phi(\theta),h_{2n_{c},11}(\theta)\right)+\mathcal{S}_{2}\left(\overline{\phi}(\theta),h_{2n_{c},20}(\theta)\right)\right).
\end{aligned}\end{equation}
\par~~~

\noindent {\bf{Step 4: Calculation of $\operatorname{Proj}_{S}\left((D_{w,w_{x},w_{xx}}f_{2}^{(1,2)}(z,0,0))U_{2}^{(2,d)}(z,0)(\theta)\right)$}}

Denote $\varphi(\theta)=\left(\varphi^{(1)}(\theta),\varphi^{(2)}(\theta)\right)^{T}=\Phi(\theta)z_{x}$,
\begin{equation*}\begin{aligned}
&F_{2}^{d}\left(\varphi(\theta),w,w_{x},w_{xx}\right)=F_{2}^{d}(\varphi(\theta)+w,0)=F_{20}^{d}(\varphi(\theta)+w) \\
&=2\left(\begin{array}{c}
\xi\tau_{c}\left((\varphi_{x}^{(1)}(0)+w_{x}^{(1)}(0))(\varphi_{x}^{(2)}(0)+w_{x}^{(2)}(0))+(\varphi^{(1)}(0)+w^{(1)}(0))(\varphi_{xx}^{(2)}(0)+w_{xx}^{(2)}(0))\right) \\
-d_{21}\tau_{c}\left((\varphi_{x}^{(1)}(-1)+w_{x}^{(1)}(-1))(\varphi_{x}^{(2)}(0)+w_{x}^{(2)}(0))+(\varphi_{xx}^{(1)}(-1)+w_{xx}^{(1)}(-1))(\varphi^{(2)}(0)+w^{(2)}(0))\right)
\end{array}\right)
\end{aligned}\end{equation*}
and
\begin{equation*}\begin{aligned}
&\widetilde{\mathcal{S}}_{2}^{(d,1)}(\varphi(\theta),w)=2\left(\begin{array}{c}
\xi\tau_{c}\varphi_{xx}^{(2)}(0)w^{(1)}(0) \\
-d_{21}\tau_{c}\varphi_{xx}^{(1)}(-1)w^{(2)}(0)
\end{array}\right), \\
&\widetilde{\mathcal{S}}_{2}^{(d,2)}(\varphi(\theta),w_{x})=2\left(\begin{array}{c}
\xi\tau_{c}(\varphi_{x}^{(2)}(0)w_{x}^{(1)}(0)+\varphi_{x}^{(1)}(0)w_{x}^{2}(0)) \\
-d_{21}\tau_{c}(\varphi_{x}^{(2)}(0)w_{x}^{(1)}(-1)+\varphi_{x}^{(1)}(-1)w_{x}^{(2)}(0))
\end{array}\right), \\
&\widetilde{\mathcal{S}}_{2}^{(d,3)}(\varphi(\theta),w_{xx})=2\left(\begin{array}{c}
\xi \tau_{c}\varphi^{(1)}(0)w_{xx}^{(2)}(0) \\
-d_{21}\tau_{c}\varphi^{(2)}(0)w_{xx}^{(1)}(-1)
\end{array}\right).
\end{aligned}\end{equation*}

By combining with (2.1) and (2.48), we have
\begin{equation*}
\left\{\begin{aligned}
&U_{2x}^{2}(z,0)(\theta)=h_{x}(\theta,z)=-\frac{n}{\ell}\sum_{n \in \mathbb{N}_{0}}h_{n}(\theta,z)\xi_{n}(x), \\
&U_{2xx}^{2}(z,0)(\theta)=h_{xx}(\theta,z)=-\frac{n^{2}}{\ell^{2}}\sum_{n \in \mathbb{N}_{0}}h_{n}(\theta,z)\gamma_{n}(x).
\end{aligned}\right.
\end{equation*}
Then we have
\begin{equation*}\begin{aligned}
&\left(D_{w,w_{x},w_{xx}}F_{2}^{d}\left(\varphi(\theta),w,w_{x},w_{xx}\right)\right)U_{2}^{(2,d)}(z,0)(\theta) \\
&=\widetilde{\mathcal{S}}_{2}^{(d,1)}(\varphi(\theta),h(\theta,z))+\widetilde{\mathcal{S}}_{2}^{(d,2)}(\varphi(\theta),h_{x}(\theta,z))+\widetilde{\mathcal{S}}_{2}^{(d,3)}(\varphi(\theta),h_{xx}(\theta,z))
\end{aligned}\end{equation*}
and
\begin{equation*}\begin{aligned}
&\left(\begin{aligned}
&\left[\widetilde{S}_{2}^{(d,1)}(\varphi(\theta),h(\theta,z)),\beta_{n_{c}}^{(1)}\right] \\
&\left[\widetilde{S}_{2}^{(d,1)}(\varphi(\theta),h(\theta,z)),\beta_{n_{c}}^{(2)}\right] \\
\end{aligned}\right) \\
&=-\left(n_{c}/\ell\right)^{2}\sum_{n \in \mathbb{N}_{0}}b_{n}\left(\mathcal{S}_{2}^{(d,1)}\left(\phi(\theta)z_{1},h_{n}(\theta,z)\right)+\mathcal{S}_{2}^{(d,1)}\left(\overline{\phi}(\theta)z_{2},h_{n}(\theta,z)\right)\right), \\
&\left(\begin{aligned}
&\left[\widetilde{S}_{2}^{(d,2)}\left(\varphi(\theta),h_{x}(\theta,z)\right),\beta_{n_{c}}^{(1)}\right] \\
&\left[\widetilde{S}_{2}^{(d,2)}\left(\varphi(\theta),h_{x}(\theta,z)\right),\beta_{n_{c}}^{(2)}\right]
\end{aligned}\right) \\
&=\left(n_{c}/\ell\right)\sum_{n \in \mathbb{N}_{0}}(n/\ell)c_{n}\left(\mathcal{S}_{2}^{(d,2)}\left(\phi(\theta)z_{1},h_{n}(\theta,z)\right)+\mathcal{S}_{2}^{(d,2)}\left(\overline{\phi}(\theta)z_{2},h_{n}(\theta,z)\right)\right), \\
&\left(\begin{aligned}
&\left[\widetilde{S}_{2}^{(d,3)}\left(\varphi(\theta),h_{xx}(\theta,z)\right),\beta_{n_{c}}^{(1)}\right] \\
&\left[\widetilde{S}_{2}^{(d,3)}\left(\varphi(\theta),h_{xx}(\theta,z)\right),\beta_{n_{c}}^{(2)}\right]
\end{aligned}\right) \\
&=-\sum_{n \in \mathbb{N}_{0}}(n/\ell)^{2}b_{n}\left(\mathcal{S}_{2}^{(d,3)}\left(\phi(\theta)z_{1},h_{n}(\theta,z)\right)+\mathcal{S}_{2}^{(d,3)}\left(\overline{\phi}(\theta)z_{2},h_{n}(\theta,z)\right)\right),
\end{aligned}\end{equation*}
where $b_{n}$ is defined by (2.49) and
\begin{equation*}
c_{n}=\int_{0}^{\ell\pi}\xi_{n_{c}}(x)\xi_{n}(x)\gamma_{n_{c}}(x)dx=\left\{\begin{aligned}
& \frac{1}{\sqrt{2\ell\pi}}, & n=2n_{c}, \\
& 0, & \text { otherwise },
\end{aligned}\right.
\end{equation*}
and for $\phi(\theta)=\left(\phi_{1}(\theta),\phi_{2}(\theta)\right)^{T},~y(\theta)=\left(y_{1}(\theta),y_{2}(\theta)\right)^{T} \in C\left([-1,0], \mathbb{R}^{2}\right)$, we have
\begin{equation*}
\left\{\begin{aligned}
&\mathcal{S}_{2}^{(d,1)}(\phi(\theta),y(\theta))=2\left(\begin{array}{c}
\xi\tau_{c}\phi_{2}(0)y_{1}(0) \\
-d_{21}\tau_{c}\phi_{1}(-1)y_{2}(0)
\end{array}\right), \\
&\mathcal{S}_{2}^{(d,2)}(\phi(\theta),y(\theta))=2\left(\begin{array}{c}
\xi\tau_{c}(\phi_{2}(0)y_{1}(0)+\phi_{1}(0)y_{2}(0)) \\
-d_{21}\tau_{c}(\phi_{2}(0)y_{1}(-1)+\phi_{1}(-1)y_{2}(0))
\end{array}\right), \\
&\mathcal{S}_{2}^{(d,3)}(\phi(\theta),y(\theta))=2\left(\begin{array}{c}
\xi \tau_{c}\phi_{1}(0)y_{2}(0) \\
-d_{21}\tau_{c}\phi_{2}(0)y_{1}(-1)
\end{array}\right).
\end{aligned}\right.
\end{equation*}

Furthermore, from (2.37), (2.39) and (2.40), we have
\begin{equation*}\begin{aligned}
&(D_{w,w_{x},w_{xx}}f_{2}^{(1,2)}(z,0,0))U_{2}^{(2,d)}(z,0)(\theta) \\
&=\Psi(0)\left(\begin{aligned}
&\left[D_{w,w_{x},w_{xx}}F_{2}^{d}\left(\varphi(\theta),w,w_{x},w_{xx}\right)U_{2}^{(2,d)}(z,0)(\theta),\beta_{n_{c}}^{(1)}\right] \\
&\left[D_{w,w_{x},w_{xx}}F_{2}^{d}\left(\varphi(\theta),w,w_{x},w_{xx}\right)U_{2}^{(2,d)}(z,0)(\theta),\beta_{n_{c}}^{(2)}\right]
\end{aligned}\right),
\end{aligned}\end{equation*}
and then we obtain
\begin{equation*}
\operatorname{Proj}_{S}\left((D_{w,w_{x},w_{xx}}f_{2}^{(1,2)}(z,0,0))U_{2}^{(2,d)}(z,0)(\theta)\right)=\mathcal{H}\left(B_{23}z_{1}^{2}z_{2}\right),
\end{equation*}
where
\begin{equation}\begin{aligned}
B_{23}=&-\frac{1}{\sqrt{\ell \pi}}(n_{c}/\ell)^{2}\psi^{T}\left(\mathcal{S}_{2}^{(d,1)}\left(\phi(\theta),h_{0,11}(\theta)\right)+\mathcal{S}_{2}^{(d,1)}\left(\overline{\phi}(\theta), h_{0,20}(\theta)\right)\right) \\
&+\frac{1}{\sqrt{2\ell\pi}}\psi^{T}\sum_{j=1,2,3}b_{2n_{c}}^{(j)}\left(\mathcal{S}_{2}^{(d,j)}\left(\phi(\theta),h_{2n_{c},11}(\theta)\right)+\mathcal{S}_{2}^{(d,j)}\left(\overline{\phi}(\theta), h_{2n_{c},20}(\theta)\right)\right)
\end{aligned}\end{equation}
with
\begin{equation*}
b_{2 n_{c}}^{(1)}=-\frac{n_{c}^{2}}{\ell^{2}},~b_{2n_{c}}^{(2)}=\frac{2n_{c}^{2}}{\ell^{2}},~b_{2n_{c}}^{(3)}=-\frac{(2n_{c})^{2}}{\ell^{2}}.
\end{equation*}

\section{Normal form of the Hopf bifurcation and the corresponding coefficients}
\label{sec:3}

According to the algorithm developed in Section 2, we can obtain the normal form of the Hopf bifurcation truncated to the third-order term
\begin{equation}
\dot{z}=Bz+\frac{1}{2}\left(\begin{array}{c}
B_{1}z_{1} \mu \\
\overline{B}_{1}z_{2}\mu
\end{array}\right)+\frac{1}{3!}\left(\begin{array}{c}
B_{2}z_{1}^{2}z_{2} \\
\overline{B}_{2}z_{1}z_{2}^{2}
\end{array}\right)+O\left(|z| \mu^{2}+\left|z\right|^{4}\right),
\end{equation}
where
\begin{equation*}\begin{aligned}
B_{1}&=2\psi^{T}(0)\left(A\phi(0)-\frac{n_{c}^{2}}{\ell^{2}}\left(D_{1}\phi(0)+D_{2}\phi(-1)\right)\right), \\
B_{2}&=B_{21}+\frac{3}{2}\left(B_{22}+B_{23}\right).
\end{aligned}\end{equation*}

Here, $B_{1}$ is determined by (2.36), $B_{21}$, $B_{22}$ and $B_{23}$ are determined by (2.42), (2.50), (2.51), respectively, and they can be calculated by using the MATLAB software. The normal form (3.1) can be written in real coordinates through the change of variables $z_{1}=v_{1}-iv_{2},~z_{2}=v_{1}+iv_{2}$, and then changing to polar coordinates by $v_{1}=\rho \cos \Theta,~v_{2}=\rho \sin \Theta$, where $\Theta$ is the azimuthal angle. Therefore, by the above transformation and removing the azimuthal term $\Theta$, (3.1) can be rewritten as
\begin{equation*}
\dot{\rho}=K_{1}\mu\rho+K_{2}\rho^{3}+O\left(\mu^{2}\rho+\lvert(\rho,\mu)\rvert^{4}\right),
\end{equation*}
where
\begin{equation*}
K_{1}=\frac{1}{2}\operatorname{Re}\left(B_{1}\right),~K_{2}=\frac{1}{3!}\operatorname{Re}\left(B_{2}\right).
\end{equation*}

According to \cite{lv29}, the sign of $K_{1}K_{2}$ determines the direction of the Hopf bifurcation, and the sign of $K_{2}$ determines the stability of the Hopf bifurcation periodic solution. More precisely, we have the following results

(i) when $K_{1}K_{2}<0$, the Hopf bifurcation is supercritical, and the Hopf bifurcation periodic solution is stable for $K_{2}<0$ and unstable for $K_{2}>0$;

(ii) when $K_{1}K_{2}>0$, the Hopf bifurcation is subcritical, and the Hopf bifurcation periodic solution is stable for $K_{2}<0$ and unstable for $K_{2}>0$.

From (2.42), (2.50) and (2.51), it is obvious that in order to obtain the value of $K_{2}$, we still need to calculate $h_{0,20}(\theta), h_{0,11}(\theta), h_{2 n_{c},20}(\theta), h_{2 n_{c},11}(\theta)$ and $A_{ij}$.

\subsection{Calculations of $h_{0,20}(\theta), h_{0,11}(\theta), h_{2n_{c},20}(\theta)$ and $h_{2n_{c},11}(\theta)$}

From \cite{lv27}, we have
\begin{equation*}
M_{2}^{2}\left(h_{n}(\theta,z) \gamma_{n}(x)\right)=D_{z}\left(h_{n}(\theta,z) \gamma_{n}(x)\right)Bz-A_{\mathcal{Q}^{1}}\left(h_{n}(\theta,z)\gamma_{n}(x)\right),
\end{equation*}
which leads to
\begin{equation}\begin{aligned}
&\left(\begin{aligned}
\left[M_{2}^{2}\left(h_{n}(\theta,z)\gamma_{n}(x)\right),\beta_{n}^{(1)}\right] \\
\left[M_{2}^{2}\left(h_{n}(\theta,z)\gamma_{n}(x)\right),\beta_{n}^{(2)}\right]
\end{aligned}\right) \\
&=2i\omega_{c}\left(h_{n,20}(\theta)z_{1}^{2}-h_{n,02}(\theta)z_{2}^{2}\right)-\left(\dot{h}_{n}(\theta,z)+X_{0}(\theta)\left(\mathscr{L}_{0}\left(h_{n}(\theta,z)\right)-\dot{h}_{n}(0,z)\right)\right),
\end{aligned}\end{equation}
where
\begin{equation*}
\mathscr{L}_{0}\left(h_{n}(\theta,z)\right)=-\tau_{c}(n/\ell)^{2}\left(D_{1}h_{n}(0,z)+D_{2}h_{n}(-1,z)\right)+\tau_{c}Ah_{n}(0,z).
\end{equation*}

By (2.19) and the second mathematical expression in (2.25), we have
\begin{equation}\begin{aligned}
f_{2}^{2}(z,0,0)&=X_{0}(\theta)\widetilde{F}_{2}\left(\Phi(\theta)z_{x},0\right)-\pi\left(X_{0}(\theta)\widetilde{F}_{2}\left(\Phi(\theta)z_{x},0\right)\right) \\
&=X_{0}(\theta)\widetilde{F}_{2}\left(\Phi(\theta)z_{x},0\right)-\Phi(\theta)\Psi(0)\left(\begin{aligned}
\left[\widetilde{F}_{2}\left(\Phi(\theta)z_{x},0\right),\beta_{n_{c}}^{(1)}\right] \\
\left[\widetilde{F}_{2}\left(\Phi(\theta)z_{x},0\right),\beta_{n_{c}}^{(2)}\right]
\end{aligned}\right)\gamma_{n_{c}}(x).
\end{aligned}\end{equation}

Furthermore, by (2.43), (2.44) and (2.45), we have
\begin{equation}
\left(\begin{aligned}
\left[f_{2}^{2}(z,0,0),\beta_{n}^{(1)}\right] \\
\left[f_{2}^{2}(z,0,0),\beta_{n}^{(2)}\right]
\end{aligned}\right)=\left\{\begin{aligned}
& \frac{1}{\sqrt{\ell\pi}}X_{0}(\theta)\left(A_{20}z_{1}^{2}+A_{02}z_{2}^{2}+A_{11}z_{1}z_{2}\right), & n=0, \\
& \frac{1}{\sqrt{2\ell\pi}}X_{0}(\theta)\left(\widetilde{A}_{20}z_{1}^{2}+\widetilde{A}_{02}z_{2}^{2}+\widetilde{A}_{11}z_{1}z_{2}\right), & n=2n_{c},
\end{aligned}\right.
\end{equation}
where $\widetilde{A}_{j_{1}j_{2}}$ is defined as follows
\begin{equation}
\left\{\begin{aligned}
&\widetilde{A}_{j_{1}j_{2}}=A_{j_{1}j_{2}}-2\left(n_{c}/\ell\right)^{2}A_{j_{1}j_{2}}^{d}, \\
&j_{1},j_{2}=0,1,2,~j_{1}+j_{2}=2,
\end{aligned}\right.
\end{equation}
where $A_{j_{1}j_{2}}^{d}$ is determined by (2.46), and $A_{j_{1}j_{2}}$ will be calculated in the following section. Therefore, from (2.39), (3.2), (3.3), (3.4), and by matching the coefficients of $z_{1}^{2}$ and $z_{1}z_{2}$, we have
\begin{equation}
n=0,~\left\{\begin{aligned}
&z_{1}^{2}: \left\{\begin{aligned}
&\dot{h}_{0,20}(\theta)-2i\omega_{c}h_{0,20}(\theta)=(0,0)^{T}, \\
&\dot{h}_{0,20}(0)-L_{0}\left(h_{0,20}(\theta)\right)=\frac{1}{\sqrt{\ell\pi}}A_{20},
\end{aligned}\right. \\
&z_{1} z_{2}: \left\{\begin{aligned}
&\dot{h}_{0,11}(\theta)=(0,0)^{T}, \\
&\dot{h}_{0,11}(0)-L_{0}\left(h_{0,11}(\theta)\right)=\frac{1}{\sqrt{\ell\pi}}A_{11}
\end{aligned}\right.
\end{aligned}\right.
\end{equation}
and
\begin{equation}
n=2n_{c},~\left\{\begin{aligned}
&z_{1}^{2}: \left\{\begin{aligned}
&\dot{h}_{2n_{c},20}(\theta)-2i\omega_{c}h_{2n_{c},20}(\theta)=(0,0)^{T}, \\
&\dot{h}_{2n_{c},20}(0)-\mathscr{L}_{0}\left(h_{2n_{c},20}(\theta)\right)=\frac{1}{\sqrt{2\ell\pi}}\widetilde{A}_{20},
\end{aligned}\right. \\
&z_{1} z_{2}: \left\{\begin{aligned}
&\dot{h}_{2n_{c},11}(\theta)=(0,0)^{T}, \\
&\dot{h}_{2n_{c},11}(0)-\mathscr{L}_{0}\left(h_{2n_{c},11}(\theta)\right)=\frac{1}{\sqrt{2\ell\pi}}\widetilde{A}_{11}.
\end{aligned}\right.
\end{aligned}\right.
\end{equation}

Next, by combining with (3.6) and (3.7), we will give the mathematical expressions of $h_{0,20}(\theta)$, $h_{0,11}(\theta)$, $h_{2n_{c},20}(\theta)$ and $h_{2n_{c},11}(\theta)$.
\par~~~
\par\noindent (1) Calculations of $h_{0,20}(\theta)$ and $ h_{0,11}(\theta)$

(i) Notice that
\begin{equation}
\left\{\begin{aligned}
\dot{h}_{0,20}(\theta)-2i\omega_{c}h_{0,20}(\theta)&=(0,0)^{T}, \\
\dot{h}_{0,20}(0)-L_{0}\left(h_{0,20}(\theta)\right)&=\frac{1}{\sqrt{\ell\pi}}A_{20},
\end{aligned}\right.
\end{equation}
then from (3.8), we have $h_{0,20}(\theta)=e^{2iw_{c}\theta}h_{0,20}(0)$ and $\dot{h}_{0,20}(0)-2i\omega_{c}h_{0,20}(0)=(0,0)^{T}$. Notice that $L_{0}\left(h_{0,20}(\theta)\right)=\tau_{c}Ah_{0,20}(0)$, then we have
\begin{equation*}
(2iw_{c}I-\tau_{c}A)h_{0,20}(0)=\frac{1}{\sqrt{\ell\pi}}A_{20},
\end{equation*}
and hence $h_{0,20}(\theta)=e^{2iw_{c}\theta}C_{1}$ with
\begin{equation*}
C_{1}=(2iw_{c}I-\tau_{c} A)^{-1}\frac{1}{\sqrt{\ell\pi}}A_{20}.
\end{equation*}

(ii) Notice that
\begin{equation}
\left\{\begin{aligned}
&\dot{h}_{0,11}(\theta)=(0,0)^{T}, \\
&\dot{h}_{0,11}(0)-L_{0}\left(h_{0,11}(\theta)\right)=\frac{1}{\sqrt{\ell\pi}}A_{11},
\end{aligned}\right.
\end{equation}
then from (3.9), we have $h_{0,11}(\theta)=h_{0,11}(0)$ and $\dot{h}_{0,11}(0)=(0,0)^{T}$. Notice that $L_{0}\left(h_{0,11}(\theta)\right)=\tau_{c}Ah_{0,11}(0)$, then we have
\begin{equation*}
-\tau_{c}Ah_{0,11}(0)=\frac{1}{\sqrt{\ell\pi}}A_{11},
\end{equation*}
and hence $h_{0,11}(\theta)=C_{2}$ with
\begin{equation*}
C_{2}=(-\tau_{c}A)^{-1}\frac{1}{\sqrt{\ell\pi}}A_{11}.
\end{equation*}

\par~~~
\par\noindent (2) Calculations of $h_{2n_{c},20}(\theta)$ and $ h_{2n_{c},11}(\theta)$

(i) Notice that
\begin{equation}
\left\{\begin{aligned}
&\dot{h}_{2n_{c},20}(\theta)-2i\omega_{c}h_{2n_{c},20}(\theta)=(0,0)^{T}, \\
&\dot{h}_{2n_{c},20}(0)-\mathscr{L}_{0}\left(h_{2n_{c},20}(\theta)\right)=\frac{1}{\sqrt{2\ell\pi}}\widetilde{A}_{20},
\end{aligned}\right.
\end{equation}
then from (3.10), we have $h_{2n_{c},20}(\theta)=e^{2iw_{c}\theta}h_{2n_{c},20}(0)$, and hence $h_{2 n_{c},20}(-1)=e^{-2iw_{c}}h_{2 n_{c},20}(0)$. Furthermore, from (3.10) and
\begin{equation*}
\mathscr{L}_{0}\left(h_{2n_{c},20}(\theta)\right)=-\tau_{c}\frac{4n_{c}^{2}}{\ell^{2}}\left(D_{1}h_{2n_{c},20}(0)+D_{2}h_{2n_{c},20}(-1)\right)+\tau_{c}Ah_{2n_{c},20}(0),
\end{equation*}
we have
\begin{equation}
2i\omega_{c}h_{2 n_{c},20}(0)=\frac{1}{\sqrt{2\ell\pi}}\widetilde{A}_{20}-\tau_{c}\frac{4n_{c}^{2}}{\ell^{2}}\left(D_{1}h_{2n_{c},20}(0)+D_{2}h_{2n_{c},20}(-1)\right)+\tau_{c}Ah_{2n_{c},20}(0).
\end{equation}
Therefore, by combining with $h_{2 n_{c},20}(-1)=e^{-2iw_{c}}h_{2n_{c},20}(0)$ and (3.11), we can obtain
\begin{equation*}
(2iw_{c}I+\tau_{c}\frac{4n_{c}^{2}}{\ell^{2}}D_{1}+\tau_{c}\frac{4n_{c}^{2}}{\ell^{2}}D_{2}e^{-2iw_{c}}-\tau_{c}A)h_{2n_{c},20}(0)=\frac{1}{\sqrt{2\ell\pi}}\widetilde{A}_{20},
\end{equation*}
and hence
\begin{equation*}
h_{2n_{c},20}(\theta)=e^{2iw_{c}\theta}C_{3}
\end{equation*}
with
\begin{equation*}
C_{3}=(2iw_{c}I+\tau_{c}\frac{4n_{c}^{2}}{\ell^{2}}D_{1}+\tau_{c}\frac{4n_{c}^{2}}{\ell^{2}}D_{2}e^{-2iw_{c}}-\tau_{c}A)^{-1}\frac{1}{\sqrt{2\ell\pi}}\widetilde{A}_{20}.
\end{equation*}

Here, $\widetilde{A}_{20}$ and $A_{20}^{d}$ are defined by (3.5) and (2.46), respectively.

(ii) Notice that
\begin{equation}
\left\{\begin{aligned}
&\dot{h}_{2n_{c},11}(\theta)=(0,0)^{T}, \\
&\dot{h}_{2n_{c},11}(0)-\mathscr{L}_{0}\left(h_{2 n_{c},11}(\theta)\right)=\frac{1}{\sqrt{2\ell\pi}}\widetilde{A}_{11},
\end{aligned}\right.
\end{equation}
then from (3.12), we have $h_{2n_{c},11}(\theta)=h_{2n_{c},11}(0)$, and hence $h_{2n_{c},11}(-1)=h_{2n_{c},11}(0)$. Furthermore, from (3.12) and
\begin{equation*}
\mathscr{L}_{0}\left(h_{2n_{c},11}(\theta)\right)=-\tau_{c}\frac{4n_{c}^{2}}{\ell^{2}}\left(D_{1}h_{2n_{c},11}(0)+D_{2}h_{2n_{c},11}(-1)\right)+\tau_{c}Ah_{2n_{c},11}(0),
\end{equation*}
we have
\begin{equation}
(0,0)^{T}=-\tau_{c}\frac{4n_{c}^{2}}{\ell^{2}}\left(D_{1}h_{2n_{c},11}(0)+D_{2}h_{2n_{c},11}(-1)\right)+\tau_{c}Ah_{2n_{c},11}(0)+\frac{1}{\sqrt{2\ell\pi}}\widetilde{A}_{11}.
\end{equation}
Therefore, by combining with $h_{2n_{c},11}(-1)=h_{2n_{c},11}(0)$ and (3.13), we can obtain
\begin{equation*}
\left(\tau_{c}\frac{4n_{c}^{2}}{\ell^{2}}D_{1}+\tau_{c}\frac{4n_{c}^{2}}{\ell^{2}}D_{2}-\tau_{c}A\right)h_{2n_{c},11}(0)=\frac{1}{\sqrt{2\ell\pi}}\widetilde{A}_{11},
\end{equation*}
and hence
\begin{equation*}
h_{2n_{c},11}(\theta)=C_{4}
\end{equation*}
with
\begin{equation*}
C_{4}=\left(\tau_{c}\frac{4n_{c}^{2}}{\ell^{2}}D_{1}+\tau_{c}\frac{4n_{c}^{2}}{\ell^{2}}D_{2}-\tau_{c}A\right)^{-1}\frac{1}{\sqrt{2\ell\pi}}\widetilde{A}_{11}.
\end{equation*}

Here, $\widetilde{A}_{11}$ and $A_{11}^{d}$ are defined by (3.5) and (2.46), respectively.

\subsection{Calculations of $A_{i,j}$ and $\mathcal{S}_{2}(\Phi(\theta)z_{x},w)$}

In this subsection, let $F(\varphi,\mu)=\left(F^{(1)}(\varphi,\mu),F^{(2)}(\varphi,\mu)\right)^{T}$ and $\varphi=(\varphi_{1},\varphi_{2})^{T}\in\mathcal{C}$, and we write
\begin{equation}
\frac{1}{j!}F_{j}(\varphi,\mu)=\sum_{j_{1}+j_{2}=j}\frac{1}{j_{1}!j_{2}!}f_{j_{1}j_{2}}\varphi_{1}^{j_{1}}(0)\varphi_{2}^{j_{2}}(0),
\end{equation}
where $f_{j_{1}j_{2}}=\operatorname{col}\left(f_{j_{1}j_{2}}^{(1)},f_{j_{1}j_{2}}^{(2)}\right)$ with
\begin{equation*}
f_{j_{1}j_{2}}^{(k)}=\frac{\partial^{j_{1}+j_{2}}F^{(k)}(0,0)}{\partial\varphi_{1}^{j_{1}}(0) \partial\varphi_{2}^{j_{2}}(0)},~k=1,2.
\end{equation*}

Then from (3.14), we have
\begin{equation}\begin{aligned}
F_{2}(\varphi,\mu)&=F_{2}(\varphi,0)=2\sum_{j_{1}+j_{2}=2}\frac{1}{j_{1}!j_{2}!}f_{j_{1}j_{2}}\varphi_{1}^{j_{1}}(0)\varphi_{2}^{j_{2}}(0) \\
&=f_{20}\varphi_{1}^{2}(0)+f_{02}\varphi_{2}^{2}(0)+2f_{11}\varphi_{1}(0)\varphi_{2}(0)
\end{aligned}\end{equation}
and
\begin{equation}\begin{aligned}
F_{3}(\varphi,0)&=6\sum_{j_{1}+j_{2}=3}\frac{1}{j_{1}!j_{2}!}f_{j_{1}j_{2}}\varphi_{1}^{j_{1}}(0)\varphi_{2}^{j_{2}}(0) \\
&=f_{30}\varphi_{1}^{3}(0)+f_{03}\varphi_{2}^{3}(0)+3f_{21}\varphi_{1}^{2}(0)\varphi_{2}(0)+3f_{12}\varphi_{1}(0)\varphi_{2}^{2}(0).
\end{aligned}\end{equation}

Notice that
\begin{equation}\begin{aligned}
&\varphi(\theta)=\Phi(\theta)z_{x}=\phi(\theta)z_{1}(t)\gamma_{n_{c}}(x)+\overline{\phi}(\theta)z_{2}(t)\gamma_{n_{c}}(x) \\
&=\left(\begin{array}{c}
\phi_{1}(\theta)z_{1}(t)\gamma_{n_{c}}(x)+\overline{\phi}_{1}(\theta)z_{2}(t)\gamma_{n_{c}}(x) \\
\phi_{2}(\theta)z_{1}(t)\gamma_{n_{c}}(x)+\overline{\phi}_{2}(\theta)z_{2}(t)\gamma_{n_{c}}(x)
\end{array}\right) \\
&=\left(\begin{array}{c}
\varphi_{1}(\theta) \\
\varphi_{2}(\theta)
\end{array}\right),
\end{aligned}\end{equation}
and similar to (2.41), we have
\begin{equation}
F_{2}\left(\Phi(\theta)z_{x},0\right)=\sum_{q_{1}+q_{2}=2}A_{q_{1}q_{2}}\gamma_{n_{c}}^{q_{1}+q_{2}}(x)z_{1}^{q_{1}}z_{2}^{q_{2}},
\end{equation}
then by combining with (3.15), (3.17) and (3.18), we have
\begin{equation*}\begin{aligned}
A_{20}&=f_{20}\phi_{1}^{2}(0)+f_{02}\phi_{2}^{2}(0)+2f_{11}\phi_{1}(0)\phi_{2}(0), \\
A_{02}&=f_{20}\overline{\phi_{1}}^{2}(0)+f_{02}\overline{\phi_{2}}^{2}(0)+2f_{11}\overline{\phi_{1}}(0)\overline{\phi_{2}}(0), \\
A_{11}&=2f_{20}\phi_{1}(0)\overline{\phi_{1}}(0)+2f_{02}\phi_{2}(0)\overline{\phi_{2}}(0)+2f_{11}(\phi_{1}(0)\overline{\phi_{2}}(0)+\overline{\phi_{1}}(0)\phi_{2}(0)).
\end{aligned}\end{equation*}

Furthermore, from (2.41), (3.16) and (3.17), we have
\begin{equation*}\begin{aligned}
A_{30}&=f_{30}\phi_{1}^{3}(0)+f_{03}\phi_{2}^{3}(0)+3f_{21}\phi_{1}^{2}(0)\phi_{2}(0)+3f_{12}\phi_{1}(0)\phi_{2}^{2}(0), \\
A_{03}&=f_{30}\overline{\phi_{1}}^{3}(0)+f_{03}\overline{\phi_{2}}^{3}(0)+3f_{21}\overline{\phi_{1}}^{2}(0)\overline{\phi_{2}}(0)+3f_{12}\overline{\phi_{1}}(0)\overline{\phi_{2}}^{2}(0), \\
A_{21}&=3f_{30}\phi_{1}^{2}(0)\overline{\phi_{1}}(0)+3f_{03}\phi_{2}^{2}(0)\overline{\phi_{2}}(0)+3f_{21}(\phi_{1}^{2}(0)\overline{\phi_{2}}(0)+2\phi_{1}(0)\overline{\phi_{1}}(0)\phi_{2}(0)) \\
&+3f_{12}(2\phi_{1}(0)\phi_{2}(0)\overline{\phi_{2}}(0)+\overline{\phi_{1}}(0)\phi_{2}^{2}(0)), \\
A_{12}&=3f_{30}\phi_{1}(0)\overline{\phi_{1}}^{2}(0)+3f_{03}\phi_{2}(0)\overline{\phi_{2}}^{2}(0)+3f_{21}(2\phi_{1}(0)\overline{\phi_{1}}(0)\overline{\phi_{2}}(0)+\overline{\phi_{1}}^{2}(0)\phi_{2}(0)) \\
&+3f_{12}(\phi_{1}(0)\overline{\phi_{2}}^{2}(0)+2\overline{\phi_{1}}(0)\phi_{2}(0)\overline{\phi_{2}}(0)).
\end{aligned}\end{equation*}

Moreover, from (3.14), we have
\begin{equation}\begin{aligned}
F_{2}(\varphi(\theta)+w,\mu)&=F_{2}(\varphi(\theta)+w,0)=2\sum_{j_{1}+j_{2}=2}\frac{1}{j_{1}!j_{2}!}f_{j_{1}j_{2}} (\varphi_{1}(0)+w_{1}(0))^{j_{1}}(\varphi_{2}(0)+w_{2}(0))^{j_{2}} \\
&=f_{20}(\varphi_{1}(0)+w_{1}(0))^{2}+f_{02}(\varphi_{2}(0)+w_{2}(0))^{2}+2f_{11}(\varphi_{1}(0)+w_{1}(0))(\varphi_{2}(0)+w_{2}(0)).
\end{aligned}\end{equation}
Notice that
\begin{equation}\begin{aligned}
\varphi(\theta)+w&=\Phi(\theta)z_{x}+w=\phi(\theta)z_{1}(t)\gamma_{n_{c}}(x)+\overline{\phi}(\theta)z_{2}(t)\gamma_{n_{c}}(x)+w \\
&=\left(\begin{array}{c}
\phi_{1}(\theta)z_{1}(t)\gamma_{n_{c}}(x)+\overline{\phi}_{1}(\theta)z_{2}(t)\gamma_{n_{c}}(x)+w_{1} \\
\phi_{2}(\theta)z_{1}(t)\gamma_{n_{c}}(x)+\overline{\phi}_{2}(\theta)z_{2}(t)\gamma_{n_{c}}(x)+w_{2}
\end{array}\right) \\
&=\left(\begin{array}{c}
\varphi_{1}(\theta)+w_{1} \\
\varphi_{2}(\theta)+w_{2}
\end{array}\right)
\end{aligned}\end{equation}
and
\begin{equation}\begin{aligned}
F_{2}\left(\Phi(\theta)z_{x}+w,\mu\right)&=F_{2}\left(\Phi(\theta)z_{x}+w,0\right) \\
&=\sum_{q_{1}+q_{2}=2} A_{q_{1}q_{2}}\gamma_{n_{c}}^{q_{1}+q_{2}}(x)z_{1}^{q_{1}}z_{2}^{q_{2}}+\mathcal{S}_{2}\left(\Phi(\theta)z_{x}, w\right)+O\left(|w|^{2}\right),
\end{aligned}\end{equation}
then by combining with (3.19), (3.20) and (3.21), we have
\begin{equation*}\begin{aligned}
&\mathcal{S}_{2}\left(\Phi(\theta)z_{x},w\right) \\
&=2f_{20}(\phi_{1}(0)z_{1}(t)\gamma_{n_{c}}(x)+\overline{\phi_{1}}(0)z_{2}(t)\gamma_{n_{c}}(x))w_{1}(0) \\
&+2f_{02}(\phi_{2}(0)z_{1}(t)\gamma_{n_{c}}(x)+\overline{\phi_{2}}(0)z_{2}(t)\gamma_{n_{c}}(x))w_{2}(0) \\
&+2f_{11}\left((\phi_{1}(0)z_{1}(t)\gamma_{n_{c}}(x)+\overline{\phi_{1}}(0)z_{2}(t)\gamma_{n_{c}}(x))w_{2}(0)+(\phi_{2}(0)z_{1}(t)\gamma_{n_{c}}(x)+\overline{\phi_{2}}(0)z_{2}(t)\gamma_{n_{c}}(x))w_{1}(0) \right).
\end{aligned}\end{equation*}

\section{Application to Holling-Tanner model with spatial memory and predator-taxis}
\label{sec:4}

In this section, we apply our newly developed algorithm in Section 2 to the Holling-Tanner model with spatial memory and predator-taxis, i.e., in the model (1.4), we let
\begin{equation}\begin{aligned}
&f\left(u(x,t),v(x,t)\right)=u(x,t)\left(1-\beta u(x,t)\right)-\frac{mu(x,t)v(x,t)}{1+u(x,t)}, \\
&g\left(u(x,t),v(x,t)\right)=sv(x,t)\left(1-\frac{v(x,t)}{u(x,t)}\right),
\end{aligned}\end{equation}
where $\beta>0$, $m>0$ and $s>0$.

Thus, (1.4) becomes the following model
\begin{equation}\left\{\begin{aligned}
&\frac{\partial u(x,t)}{\partial t}=d_{11}\Delta u(x,t)+\xi\left(u(x,t)v_{x}(x,t)\right)_{x}+u(x,t)\left(1-\beta u(x,t)\right)-\frac{mu(x,t)v(x,t)}{1+u(x,t)}, & x \in (0,\ell\pi),~t>0, \\
&\frac{\partial v(x,t)}{\partial t}=d_{22}\Delta v(x,t)-d_{21}\left(v(x,t)u_{x}(x,t-\tau)\right)_{x}+sv(x,t)\left(1-\frac{v(x,t)}{u(x,t)}\right), & x \in (0,\ell\pi),~t>0, \\
&u_{x}(0,t)=u_{x}(\ell\pi,t)=v_{x}(0,t)=v_{x}(\ell\pi,t)=0, & t \geq 0.
\end{aligned}\right.\end{equation}

The Holling-tanner model is one of the typical predator-prey models. For the ordinary differential equation (4.2) with $d_{11}=\xi=d_{21}=d_{22}=0$, it has been completely analyzed in \cite{lv30}. For the diffusive model (4.2) with $\xi=d_{21}=0$, the global stability of the positive constant steady state was proved in \cite{lv31,lv32}, and the Hopf bifurcation and Turing instability have been studied in \cite{lv33}.

\subsection{Stability and Hopf bifurcation analysis}

The system (4.2) has the positive constant steady state $E_{*}\left(u_{*},v_{*}\right)$, where
\begin{equation}
u_{*}=v_{*}=\frac{1}{2 \beta}\left(\sqrt{R^{2}+4 \beta}-R\right)
\end{equation}
with $R=\beta+m-1$. By combining with $E_{*}\left(u_{*},v_{*}\right)$, (2.4) and (4.1), we have
\begin{equation}\begin{aligned}
&a_{11}=1-2\beta u_{*}-\frac{mu_{*}}{(1+u_{*})^{2}},~a_{12}=-\frac{mu_{*}}{1+u_{*}}<0, \\
&a_{21}=s>0,~a_{22}=-s<0.
\end{aligned}\end{equation}

Moreover, by combining with (2.3), (2.5), (2.6) and (4.4), the characteristic equation of system (4.2) can be written as
\begin{equation}
\Gamma_{n}(\lambda)=\det\left(\mathcal{M}_{n}(\lambda)\right)=\lambda^{2}-T_{n}\lambda+\widetilde{J}_{n}(\tau)=0,
\end{equation}
where
\begin{equation}\begin{aligned}
T_{n}&=\operatorname{Tr}(A)-\operatorname{Tr}(D_{1})\frac{n^{2}}{\ell^{2}}, \\
\widetilde{J}_{n}(\tau)&=(d_{11}d_{22}+d_{21}\xi u_{*}v_{*}e^{-\lambda\tau})\frac{n^{4}}{\ell^{4}}-\left(d_{11}a_{22}+d_{22}a_{11}-a_{21}\xi u_{*}+d_{21}v_{*}a_{12}e^{-\lambda\tau}\right)\frac{n^{2}}{\ell^{2}}+\operatorname{Det}(A).
\end{aligned}\end{equation}
Notice that the mathematical expression in (4.6) the same as in (2.7).

Furthermore, when $\tau=0$, the characteristic equation (4.5) becomes
\begin{equation}
\lambda^{2}-T_{n}\lambda+\widetilde{J}_{n}(0)=0,
\end{equation}
where
\begin{equation}
\widetilde{J}_{n}(0)=(d_{11}d_{22}+d_{21}\xi u_{*}v_{*})\frac{n^{4}}{\ell^{4}}-\left(d_{11}a_{22}+d_{22}a_{11}-a_{21}\xi u_{*}+d_{21}v_{*}a_{12}\right)\frac{n^{2}}{\ell^{2}}+\operatorname{Det}(A).
\end{equation}

A set of sufficient and necessary condition that all roots of (4.7) have negative real parts is $T_{n}<0,~\widetilde{J}_{n}(0)>0$, which is always holds provided that $a_{11}<0$, i.e.,
\begin{equation*}
(C_{0}):~1-2\beta u_{*}-\frac{mu_{*}}{(1+u_{*})^{2}}<0.
\end{equation*}

This implies that when $\tau=0$ and the condition $(C_{0})$ holds, the positive steady state $E_{*}(u_{*},v_{*})$ is asymptotically stable for $d_{11} \geq 0$, $\xi \geq 0$, $d_{21} \geq 0$ and $d_{22} \geq 0$. Meanwhile, if we let $d_{21}=0$, then by (4.8), we have
\begin{equation*}
\widetilde{J}_{n}:=d_{11}d_{22}\frac{n^{4}}{\ell^{4}}-\left(d_{11}a_{22}+d_{22}a_{11}-a_{21}\xi u_{*}\right)\frac{n^{2}}{\ell^{2}}+\operatorname{Det}(A).
\end{equation*}

It is easy to verify that $T_{n}<0$ and $\widetilde{J}_{n}>0$ provided that the condition $(C_{0})$ holds. This implies that when $\tau=0$, $d_{21}=0$ and the condition $(C_{0})$ holds, the positive steady state $E_{*}(u_{*},v_{*})$ is asymptotically stable for $d_{11} \geq 0$, $\xi \geq 0$ and $d_{22} \geq 0$. Furthermore, since $\Gamma_{n}(0)=\widetilde{J}_{n}(0)>0$ under the condition $(C_{0})$, this implies that $\lambda=0$ is not a root of (4.5).

In the following, we let
\begin{equation*}\begin{aligned}
J_{n}=d_{11}d_{22}\frac{n^{4}}{\ell^{4}}-\left(d_{11}a_{22}+d_{22}a_{11}-a_{21}\xi u_{*}\right)\frac{n^{2}}{\ell^{2}}+\operatorname{Det}(A).
\end{aligned}\end{equation*}
Furthermore, let $\lambda=i \omega_{n}(\omega_{n}>0)$ be a root of (4.5). By substituting it along with expressions in (4.6) into (4.5), and separating the real part from the imaginary part, we have
\begin{equation}
\left\{\begin{aligned}
&\omega_{n}^{2}-J_{n}=\left(d_{21}\xi u_{*}v_{*}\frac{n^{4}}{\ell^{4}}-d_{21}v_{*}a_{12}\frac{n^{2}}{\ell^{2}}\right)\cos(\omega_{n}\tau), \\
&-T_{n}\omega_{n}=\left(d_{21}\xi u_{*}v_{*}\frac{n^{4}}{\ell^{4}}-d_{21}v_{*}a_{12}\frac{n^{2}}{\ell^{2}}\right)\sin(\omega_{n}\tau),
\end{aligned}\right.
\end{equation}
which yields
\begin{equation}
\omega_{n}^{4}+P_{n}\omega_{n}^{2}+Q_{n}=0,
\end{equation}
where
\begin{equation*}
P_{n}=T_{n}^{2}-2J_{n}=\left(d_{11}^{2}+d_{22}^{2}\right)\frac{n^{4}}{\ell^{4}}-2\left(d_{11}a_{11}+d_{22}a_{22}+a_{21}\xi u_{*}\right)\frac{n^{2}}{\ell^{2}}+a_{11}^{2}+a_{22}^{2}+2a_{12}a_{21},
\end{equation*}
and
\begin{equation}
Q_{n}=\left(J_{n}+\left(d_{21}\xi u_{*}v_{*}\frac{n^{4}}{\ell^{4}}-d_{21}v_{*}a_{12}\frac{n^{2}}{\ell^{2}}\right)\right)\left(J_{n}-\left(d_{21}\xi u_{*} v_{*}\frac{n^{4}}{\ell^{4}}-d_{21}v_{*}a_{12}\frac{n^{2}}{\ell^{2}}\right)\right).
\end{equation}

Notice that for (4.10), it is easy to see that if
\begin{equation*}
\text{ either } P_{n}>0 \text{ and } Q_{n}>0 \text{ or } P_{n}^{2}-4Q_{n}<0,
\end{equation*}
then (4.10) has no positive root. Suppose that
\begin{equation*}
Q_{n}>0,~P_{n}<0 \text { and } P_{n}^{2}-4Q_{n}>0,
\end{equation*}
then (4.10) has two positive roots. In addition, if
\begin{equation*}
\text{ either } Q_{n}<0 \text{ or } P_{n}<0 \text{ and } P_{n}^{2}-4Q_{n}=0,
\end{equation*}
then (4.10) has only one positive root.

\begin{case}
It is easy to see that if the conditions $(C_{0})$ and
\begin{equation*}
(C_{1}):~P_{n}>0 \text{ and } Q_{n}>0 \text{ or } P_{n}^{2}-4Q_{n}<0,
\end{equation*}
hold, then (4.10) has no positive roots. Hence, by combining with the assumption 2.1, we know that all roots of (4.5) have negative real parts when $\tau \in[0,+\infty)$ under the conditions $(C_{0})$ and $(C_{1})$.
\end{case}

In the following, we mainly consider the case of $Q_{n}<0$, that is (4.10) has only one positive root $\omega_{n}$. In the following, we will discuss the case which is used to guarantee $Q_{n}<0$ under the condition $(C_{0})$.

When $\tau>0$, according to (4.5) and (4.11), we can define $Q_{n}=\Gamma_{n}(0)\widetilde{Q}_{n}$ with
\begin{equation*}
\Gamma_{n}(0)=\widetilde{J}_{n}(0)=(d_{11}d_{22}+d_{21}\xi u_{*}v_{*})\frac{n^{4}}{\ell^{4}}-\left(d_{11}a_{22}+d_{22}a_{11}-a_{21}\xi u_{*}+d_{21}v_{*}a_{12}\right)\frac{n^{2}}{\ell^{2}}+\operatorname{Det}(A)
\end{equation*}
and
\begin{equation}
\widetilde{Q}_{n}=(d_{11}d_{22}-d_{21}\xi u_{*}v_{*})\frac{n^{4}}{\ell^{4}}-(d_{11}a_{22}+d_{22}a_{11}-a_{21}\xi u_{*}-d_{21}v_{*}a_{12})\frac{n^{2}}{\ell^{2}}+\operatorname{Det}(A),
\end{equation}
and then by a simple analysis, we have $\Gamma_{n}(0)=\widetilde{J}_{n}(0)>0$ for any $n \in \mathbb{N}_{0}$. Therefore, the sign of $Q_{n}$ coincides with that of $\widetilde{Q}_{n}$, and in order to guaranteeing $Q_{n}<0$, we only need to study the case of $\widetilde{Q}_{n}<0$.

\begin{case}
If $(d_{11}d_{22}-d_{21}\xi u_{*}v_{*})>0$ and the conditions $(C_{0})$,
\begin{equation*}\begin{aligned}
&(C_{2}):~\operatorname{Det}(A)>0,~d_{11}a_{22}+d_{22}a_{11}-a_{21}\xi u_{*}-d_{21}v_{*}a_{12}>0 \\
&(d_{11}a_{22}+d_{22}a_{11}-a_{21}\xi u_{*}-d_{21}v_{*}a_{12})^{2}-4(d_{11}d_{22}-d_{21}\xi u_{*}v_{*})\operatorname{Det}(A)>0
\end{aligned}\end{equation*}
hold, then (4.12) has two positive roots. Without loss of generality, we assume that the two positive roots of (4.12) are $\widetilde{x}_{1}$ and $\widetilde{x}_{2}$, i.e.,
\begin{equation}
\widetilde{x}_{1,2}=\frac{\widetilde{A}_{1} \mp \sqrt{\widetilde{A}_{3}}}{2\widetilde{A}_{2}},
\end{equation}
where
\begin{equation}\begin{aligned}
\widetilde{A}_{1}&=d_{11}a_{22}+d_{22}a_{11}-a_{21}\xi u_{*}-d_{21}v_{*}a_{12},~\widetilde{A}_{2}=d_{11}d_{22}-d_{21}\xi u_{*}v_{*}, \\
\widetilde{A}_{3}&=\widetilde{A}_{1}^{2}-4\widetilde{A}_{2}\operatorname{Det}(A).
\end{aligned}\end{equation}

Since $\widetilde{x}_{1}=n_{1}^{2}/\ell^{2}$ and $\widetilde{x}_{2}=n_{2}^{2}/\ell^{2}$, then $n_{1}=\ell\sqrt{\widetilde{x}_{1}}$ and $n_{2}=\ell\sqrt{\widetilde{x}_{2}}$. By using a geometric argument, we can conclude that
\begin{equation*}
Q_{n}=\Gamma_{n}(0)\widetilde{Q}_{n}\left\{\begin{aligned}
& <0, & n_{1}<n<n_{2}, \\
& \geq 0, & n\leq n_{1} \text{ or } n \geq n_{2},
\end{aligned}
\right.\end{equation*}
where $n \in \mathbb{N}$. Therefore, (4.10) has one positive root $\omega_{n}$ for $n_{1}<n<n_{2}$ with $n\in \mathbb{N}$, where
\begin{equation}
\omega_{n}=\sqrt{\frac{-P_{n}+\sqrt{P_{n}^{2}-4Q_{n}}}{2}}.
\end{equation}
Furthermore, by combining with the second mathematical expression in (4.9), and notice that $a_{12}<0$, $T_{n}<0$ under the condition $(C_{0})$, then we have $\sin(\omega_{n}\tau)>0$. Thus, from the first mathematical expression in (4.9), we can set
\begin{equation}
\tau_{n,j}=\frac{1}{\omega_{n}}\left\{\arccos\left\{\frac{\omega_{n}^{2}-J_{n}}{d_{21}\xi u_{*}v_{*}(n^{4}/\ell^{4})-d_{21}v_{*}a_{12} (n^{2}/\ell^{2})}\right\}+2j\pi\right\},~n \in \mathbb{N},~j \in \mathbb{N}_{0}.
\end{equation}
\end{case}

Next, we continue to verify the transversality conditions for the Case 4.2.
\begin{lemma}
Suppose that $(d_{11}d_{22}-d_{21}\xi u_{*}v_{*})>0$, the conditions $(C_{0})$, $(C_{2})$ hold, and $n_{1}<n<n_{2}$ with $n\in \mathbb{N}$, then we have
\begin{equation*}
\left.\frac{d \operatorname{Re}(\lambda(\tau))}{d \tau}\right|_{\tau=\tau_{n,j}}>0,
\end{equation*}
where $\operatorname{Re}(\lambda(\tau))$ represents the real part of $\lambda(\tau)$.
\end{lemma}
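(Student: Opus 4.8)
The plan is to use the classical implicit-differentiation argument for transversality in delay differential equations. First I would recast the characteristic equation (4.5)--(4.6) by isolating its delay-dependent part: writing $\widetilde{J}_{n}(\tau)=J_{n}+R_{n}e^{-\lambda\tau}$, with $J_{n}$ as defined in Subsection 4.1 and $R_{n}=d_{21}v_{*}\frac{n^{2}}{\ell^{2}}\bigl(\xi u_{*}\frac{n^{2}}{\ell^{2}}-a_{12}\bigr)$, the equation $\Gamma_{n}(\lambda)=0$ becomes
\begin{equation*}
\Delta(\lambda,\tau):=\lambda^{2}-T_{n}\lambda+J_{n}+R_{n}e^{-\lambda\tau}=0 .
\end{equation*}
Since $a_{12}<0$ by (4.4) and $d_{21},v_{*},\xi,u_{*}>0$, we have $R_{n}>0$, so none of the divisions by $R_{n}$ below is problematic.

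Next I would differentiate $\Delta(\lambda(\tau),\tau)=0$ with respect to $\tau$ along the branch $\lambda(\tau)$, solve for $d\lambda/d\tau$, take the reciprocal, and use $\Delta(\lambda,\tau)=0$ to eliminate the exponential via $e^{\lambda\tau}=-R_{n}/(\lambda^{2}-T_{n}\lambda+J_{n})$; this yields
\begin{equation*}
\left(\frac{d\lambda}{d\tau}\right)^{-1}=\frac{-(2\lambda-T_{n})}{\lambda\left(\lambda^{2}-T_{n}\lambda+J_{n}\right)}-\frac{\tau}{\lambda}.
\end{equation*}
I would then invoke the elementary sign identity $\operatorname{sign}\left[\frac{d\operatorname{Re}(\lambda)}{d\tau}\right]_{\tau=\tau_{n,j}}=\operatorname{sign}\left[\operatorname{Re}\left(\frac{d\lambda}{d\tau}\right)^{-1}\right]_{\lambda=i\omega_{n}}$, which holds because $\operatorname{Re}(z^{-1})=\operatorname{Re}(z)/|z|^{2}$. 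At $\lambda=i\omega_{n}$ the term $-\tau/\lambda$ is purely imaginary and drops out of the real part.

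Substituting $\lambda=i\omega_{n}$ into the first term and separating real and imaginary parts, using $\lambda^{2}-T_{n}\lambda+J_{n}=(J_{n}-\omega_{n}^{2})-iT_{n}\omega_{n}$ and $P_{n}=T_{n}^{2}-2J_{n}$, I expect the real part to collapse to
\begin{equation*}
\operatorname{Re}\left(\frac{d\lambda}{d\tau}\right)^{-1}\bigg|_{\lambda=i\omega_{n}}=\frac{2\omega_{n}^{2}+P_{n}}{\left(J_{n}-\omega_{n}^{2}\right)^{2}+T_{n}^{2}\omega_{n}^{2}} .
\end{equation*}
Finally, since $\omega_{n}$ is the positive root (4.16), i.e. $\omega_{n}^{2}=\bigl(-P_{n}+\sqrt{P_{n}^{2}-4Q_{n}}\bigr)/2$, we get $2\omega_{n}^{2}+P_{n}=\sqrt{P_{n}^{2}-4Q_{n}}$; and under the hypotheses $d_{11}d_{22}-d_{21}\xi u_{*}v_{*}>0$, $(C_{0})$, $(C_{2})$ with $n_{1}<n<n_{2}$, the analysis in Case 4.2 gives $Q_{n}=\Gamma_{n}(0)\widetilde{Q}_{n}<0$, whence $P_{n}^{2}-4Q_{n}>0$ and the displayed quantity is strictly positive, proving the claim. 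The only real obstacle is the bookkeeping in the penultimate step: carefully tracking the cancellations when rationalizing $(T_{n}-2i\omega_{n})/\bigl(T_{n}\omega_{n}^{2}+i\omega_{n}(J_{n}-\omega_{n}^{2})\bigr)$ and verifying that the numerator's real part reduces to $\omega_{n}^{2}\bigl(2\omega_{n}^{2}+P_{n}\bigr)$; everything else is routine.
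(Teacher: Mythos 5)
Your proposal is correct and follows essentially the same route as the paper: implicit differentiation of $\Gamma_{n}(\lambda)=0$, passing to $\left(d\lambda/d\tau\right)^{-1}$, discarding the purely imaginary $-\tau/\lambda$ term, and reducing the real part to $\sqrt{P_{n}^{2}-4Q_{n}}$ over a positive quantity using $2\omega_{n}^{2}+P_{n}=\sqrt{P_{n}^{2}-4Q_{n}}$ and $Q_{n}<0$. The only cosmetic difference is that you eliminate $e^{\lambda\tau}$ via the characteristic equation itself while the paper substitutes the expressions for $\cos(\omega_{n}\tau_{n,j})$ and $\sin(\omega_{n}\tau_{n,j})$ from (4.9); the resulting denominators $(J_{n}-\omega_{n}^{2})^{2}+T_{n}^{2}\omega_{n}^{2}$ and $\bigl(d_{21}\xi u_{*}v_{*}(n^{4}/\ell^{4})-d_{21}v_{*}a_{12}(n^{2}/\ell^{2})\bigr)^{2}$ coincide by squaring and adding the two equations in (4.9).
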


\begin{proof}
By differentiating the two sides of
\begin{equation*}
\Gamma_{n}(\lambda)=\operatorname{det}\left(\mathcal{M}_{n}(\lambda)\right)=\lambda^{2}-T_{n}\lambda+\widetilde{J}_{n}(\tau)=0
\end{equation*}
with respect to $\tau$, where $T_{n}$ and $\widetilde{J}_{n}(\tau)$ are defined by (4.6), we have
\begin{equation}
\left(\frac{d\lambda(\tau)}{d\tau}\right)^{-1}=\frac{\left(2\lambda-T_{n}\right)e^{\lambda\tau}}{-\lambda d_{21}v_{*}a_{12}(n^{2}/\ell^{2})+\lambda d_{21}\xi u_{*}v_{*}(n^{4}/\ell^{4})}-\frac{\tau}{\lambda}.
\end{equation}
Therefore, by (4.17), we have
\begin{equation}\begin{aligned}
\operatorname{Re}\left(\left.\frac{d\lambda(\tau)}{d\tau}\right|_{\tau=\tau_{n,j}}\right)^{-1}&=\operatorname{Re}\left(\frac{\left(2i\omega_{n}-T_{n}\right)e^{i\omega_{n}\tau_{n,j}}}{-i\omega_{n}d_{21}v_{*}a_{12}(n^{2}/\ell^{2})+i\omega_{n}d_{21}\xi u_{*}v_{*}(n^{4}/\ell^{4})}\right) \\
&=\operatorname{Re}\left(\frac{\left(2i\omega_{n}-T_{n}\right)\left(\cos(\omega_{n}\tau_{n,j})+i\sin(\omega_{n}\tau_{n,j})\right)}{-i\omega_{n}d_{21}v_{*}a_{12}(n^{2}/\ell^{2})+i\omega_{n}d_{21}\xi u_{*}v_{*}(n^{4}/\ell^{4})}\right) \\
&=\operatorname{Re}\left(\frac{\left(2i\omega_{n}-T_{n}\right)\left(\cos(\omega_{n}\tau_{n,j})+i\sin(\omega_{n}\tau_{n,j})\right)}{i\omega_{n}(d_{21}\xi u_{*}v_{*}(n^{4}/\ell^{4})-d_{21}v_{*}a_{12}(n^{2}/\ell^{2}))}\right) \\
&=\frac{2\cos(\omega_{n}\tau_{n,j})}{d_{21}\xi u_{*}v_{*}(n^{4}/\ell^{4})-d_{21}v_{*}a_{12}(n^{2}/\ell^{2})}-\frac{T_{n}\sin(\omega_{n}\tau_{n,j})}{\omega_{n}(d_{21}\xi u_{*}v_{*}(n^{4}/\ell^{4})-d_{21}v_{*}a_{12}(n^{2}/\ell^{2}))}.
\end{aligned}\end{equation}
Furthermore, according to (4.9), we have
\begin{equation}\begin{aligned}
&\sin(\omega_{n}\tau_{n,j})=\frac{-T_{n}\omega_{n}}{d_{21}\xi u_{*}v_{*}(n^{4}/\ell^{4})-d_{21}v_{*}a_{12}(n^{2}/\ell^{2})}, \\
&\cos(\omega_{n}\tau_{n,j})=\frac{\omega_{n}^{2}-J_{n}}{d_{21}\xi u_{*}v_{*}(n^{4}/\ell^{4})-d_{21}v_{*}a_{12}(n^{2}/\ell^{2})}.
\end{aligned}\end{equation}
Moreover, by combining with (4.18), (4.19) and
\begin{equation*}
\omega_{n}=\sqrt{\frac{-P_{n}+\sqrt{P_{n}^{2}-4Q_{n}}}{2}}>0,~Q_{n}<0,~a_{12}<0,
\end{equation*}
we have
\begin{equation*}\begin{aligned}
\operatorname{Re}\left(\left.\frac{d\lambda(\tau)}{d\tau}\right|_{\tau=\tau_{n,j}}\right)^{-1}&=\frac{2\cos(\omega_{n}\tau_{n,j})}{d_{21}\xi u_{*}v_{*}(n^{4}/\ell^{4})-d_{21}v_{*}a_{12}(n^{2}/\ell^{2})}-\frac{T_{n}\sin(\omega_{n}\tau_{n,j})}{\omega_{n}(d_{21}\xi u_{*}v_{*}(n^{4}/\ell^{4})-d_{21}v_{*}a_{12}(n^{2}/\ell^{2}))} \\
&=\frac{2\omega_{n}^{3}+\omega_{n}(T_{n}^{2}-2J_{n})}{\omega_{n}\left(d_{21}\xi u_{*}v_{*}(n^{4}/\ell^{4})-d_{21}v_{*}a_{12}(n^{2}/\ell^{2})\right)^{2}} \\
&=\frac{\sqrt{P_{n}^{2}-4Q_{n}}}{\left(d_{21}\xi u_{*}v_{*}(n^{4}/\ell^{4})-d_{21}v_{*}a_{12}(n^{2}/\ell^{2})\right)^{2}}>0.
\end{aligned}\end{equation*}

This, together with the fact that
\begin{equation*}
\operatorname{sign}\left\{\left.\frac{d \operatorname{Re}(\lambda(\tau))}{d\tau}\right|_{\tau=\tau_{n,j}}\right\}=\operatorname{sign}\left\{\operatorname{Re}\left(\left.\frac{d \lambda(\tau)}{d\tau}\right|_{\tau=\tau_{n,j}}\right)^{-1}\right\}
\end{equation*}
completes the proof, where $\operatorname{sign}(.)$ represents the sign function.
\end{proof}

Moreover, according to the above analysis, we have the following results.
\begin{lemma}
If the condition $(C_{0})$ is satisfied, then we have the following conclusions

(i) if the condition $(C_{1})$ holds, then the positive constant steady state $E_{*}\left(u_{*},v_{*}\right)$ of system (4.2) is asymptotically stable for all $\tau \geq 0$;

(ii) if $(d_{11}d_{22}-d_{21}\xi u_{*}v_{*})>0$ and the condition $(C_{2})$ holds, by denoting $\tau_{*}=\min\left\{\tau_{n,0}: n_{1}<n<n_{2},~n \in \mathbb{N}\right\}$, then the positive constant steady state $E_{*}\left(u_{*},v_{*}\right)$ of system (4.2) is asymptotically stable for $0 \leq \tau<\tau_{*}$ and unstable for $\tau>\tau_{*}$. Furthermore, system (4.2) undergoes mode-$n$ Hopf bifurcations at $\tau=\tau_{n,0}$ for $n \in \mathbb{N}$.
\end{lemma}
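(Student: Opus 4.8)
The plan is to prove both parts by tracking, as $\tau$ increases from $0$, the roots of the characteristic functions $\Gamma_n(\lambda)$: standard a priori bounds on $\Gamma_n$ confine any root with nonnegative real part to a bounded region (uniformly in $n$, since for large $n$ the roots are pushed far into the left half-plane), and the roots depend continuously on $\tau$, so the number of roots of $\prod_{n\in\mathbb{N}_0}\Gamma_n$ in the open right half-plane can change only when a root crosses the imaginary axis. For part (i) I would first recall that under $(C_0)$ the state $E_*$ is asymptotically stable at $\tau=0$, and that $\lambda=0$ is never a root since $\Gamma_n(0)=\widetilde J_n(0)>0$ for every $n$. Under $(C_1)$, for every $n\in\mathbb{N}_0$ the quartic $\omega^4+P_n\omega^2+Q_n$ has no positive zero, so (4.10) cannot hold and $\Gamma_n$ has no purely imaginary root for any $\tau\ge0$. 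Consequently no root ever reaches the imaginary axis, the right-half-plane count stays at its value $0$ attained at $\tau=0$, and $E_*$ is asymptotically stable for all $\tau\ge0$.

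For part (ii), the analysis of Case 4.2 already supplies, under $(d_{11}d_{22}-d_{21}\xi u_*v_*)>0$ and $(C_2)$, the two positive roots $\widetilde x_{1,2}$ of (4.12), the wave numbers $n_1=\ell\sqrt{\widetilde x_1}<n_2=\ell\sqrt{\widetilde x_2}$, and the sign rule $Q_n<0$ exactly for $n_1<n<n_2$. For each such $n$ equation (4.10) has the unique positive root $\omega_n$ of (4.15), so $\Gamma_n(i\omega_n)=0$ holds precisely at the delays $\tau_{n,j}$ of (4.16); for $n\le n_1$ or $n\ge n_2$ one checks, using $Q_n\ge0$ together with the sign of $P_n$, that (4.10) has no positive root, so those wave numbers produce no imaginary eigenvalue at any $\tau$. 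Hence the only delays at which a root of some $\Gamma_n$ can cross the imaginary axis are the $\tau_{n,j}$ with $n_1<n<n_2$, and since $\tau_{n,0}<\tau_{n,1}<\cdots$ for each such $n$, the quantity $\tau_*=\min\{\tau_{n,0}:n_1<n<n_2,\ n\in\mathbb{N}\}$ is the smallest of all these crossing values.

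It remains to read off stability, instability, and the bifurcation. By the preceding transversality lemma, $d\operatorname{Re}(\lambda(\tau))/d\tau>0$ at every $\tau_{n,j}$, so each crossing is strictly from left to right and the number of right-half-plane roots is nondecreasing in $\tau$. That number equals $0$ at $\tau=0$ (where $(C_0)$ already gives asymptotic stability), hence it stays $0$ on $[0,\tau_*)$, so $E_*$ is asymptotically stable there; and it is $\ge1$ for $\tau>\tau_*$ and cannot decrease, so $E_*$ is unstable there. Finally, at each $\tau=\tau_{n,0}$ with $n_1<n<n_2$ the pair $\pm i\omega_n$ is a simple pair of roots of $\Gamma_n$ crossing the imaginary axis transversally while every other eigenvalue has negative real part, so Assumption 2.1 holds with $n_c=n$, $\tau_c=\tau_{n,0}$ and $\omega_{n_c}=\omega_n$, and the Hopf bifurcation theorem for partial functional differential equations yields a mode-$n$ Hopf bifurcation of (4.2).

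The step I expect to be the main obstacle is justifying that at $\tau=\tau_{n,0}$ the purely imaginary spectrum is exactly $\{\pm i\omega_n\}$. This requires (a) a sign argument showing $P_n>0$ (or at least $P_n^2<4Q_n$) for all wave numbers outside the interval $(n_1,n_2)$, which does not follow purely formally from $(C_0)$ and $(C_2)$ and may call for a mild extra restriction on the parameters; and (b) excluding accidental coincidences $\tau_{n,0}=\tau_{m,j}$ with $(m,j)\neq(n,0)$, which fail only on a thin set in parameter space but must be ruled out for the mode label in the conclusion to be well defined. Everything else is routine continuity-of-roots bookkeeping combined with the transversality already established.
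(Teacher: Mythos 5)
Your proposal follows essentially the same route as the paper, which derives Lemma 4.4 directly from the preceding analysis: stability at $\tau=0$ under $(C_0)$, the absence of purely imaginary roots of $\Gamma_n$ for all $\tau\ge 0$ under $(C_1)$ for part (i), and for part (ii) the crossing delays $\tau_{n,j}$ of (4.16) for $n_1<n<n_2$ combined with the transversality result of Lemma 4.3 and a continuity-of-roots count of right-half-plane eigenvalues. The two obstacles you flag --- that $Q_n\ge 0$ alone does not rule out positive roots of (4.10) when $P_n<0$ and $P_n^2-4Q_n>0$ for wave numbers outside $(n_1,n_2)$, and that accidental coincidences $\tau_{n,0}=\tau_{m,j}$ must be excluded for Assumption 2.1 to hold at each bifurcation value --- are genuine points that the paper also leaves implicit, so your write-up is, if anything, more careful than the source.
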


\subsection{Numerical simulations}

In this section, we verify the analytical results given in the previous sections by some numerical simulations and investigate the direction and stability of Hopf bifurcation. We use the following initial conditions for the system (4.2)
\begin{equation*}
u(x,t)=u_{0}(x),~v(x,t)=v_{0}(x),~t \in\left[-\tau,0\right].
\end{equation*}

\subsubsection{Mode-1 Hopf bifurcation}

\begin{figure}[!htbp]
\centering
\includegraphics[width=2.3in]{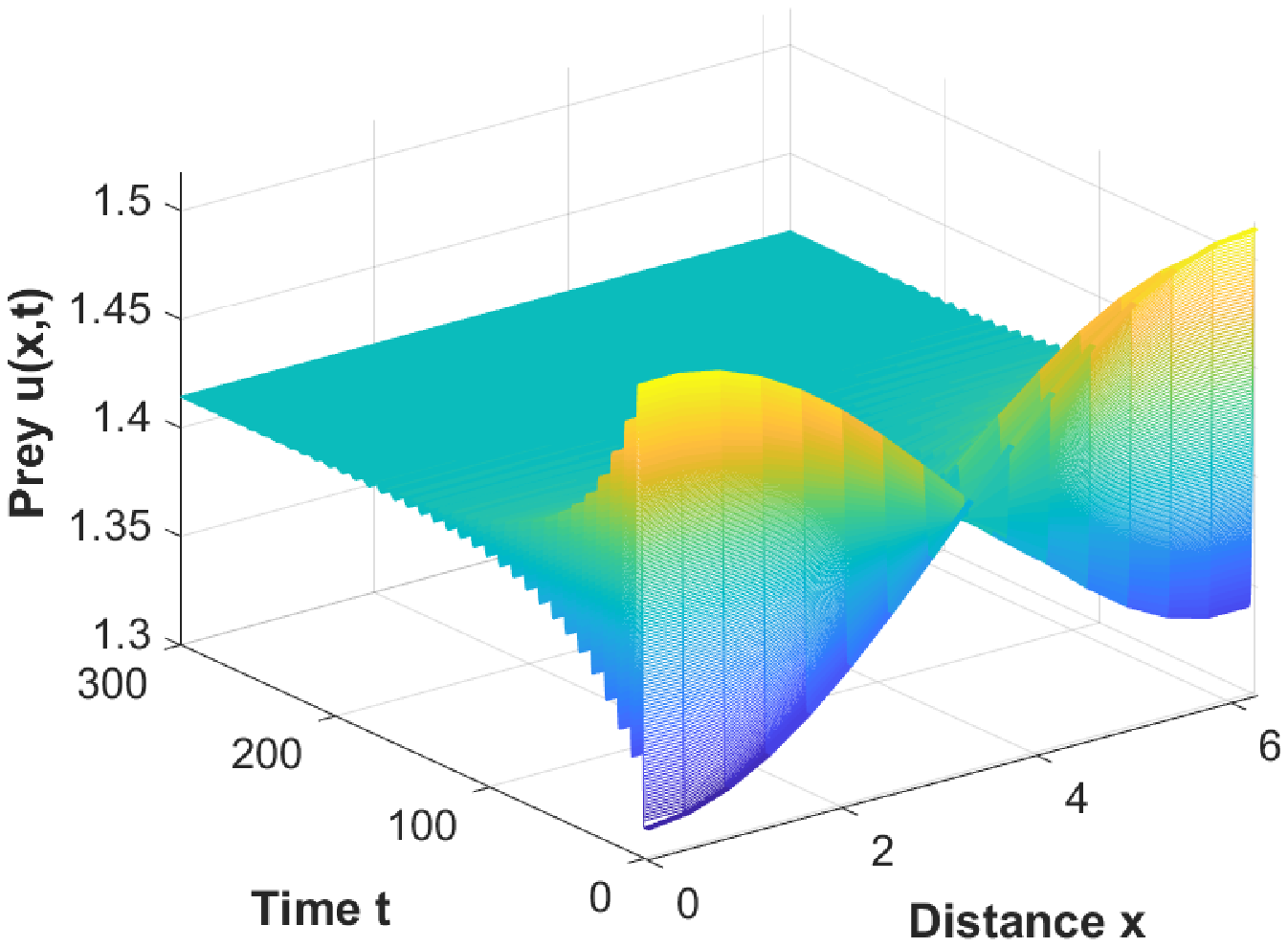}
\includegraphics[width=2.3in]{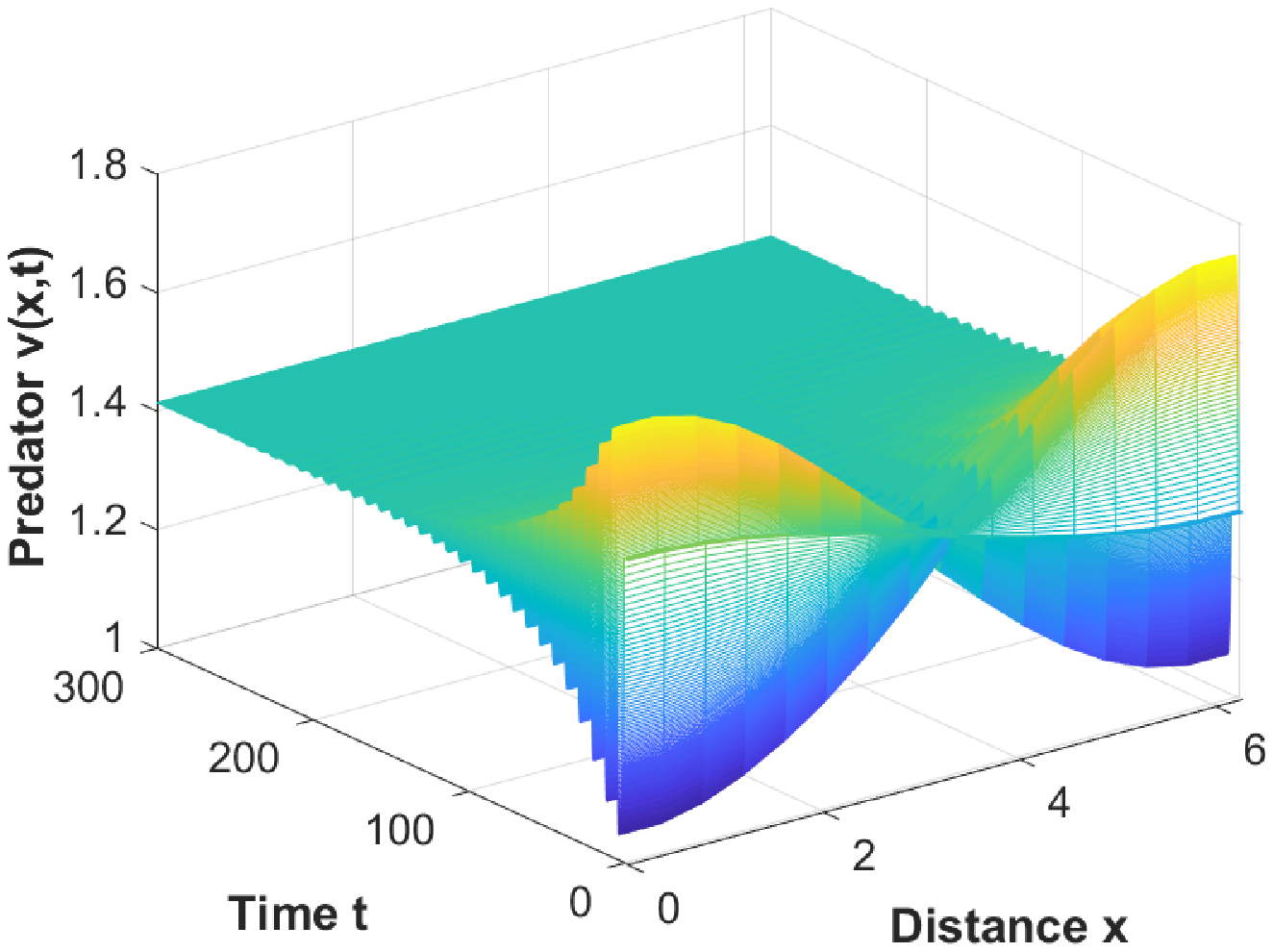} \\
\textbf{(a)} \hspace{5cm} \textbf{(b)} \\
\includegraphics[width=2.3in]{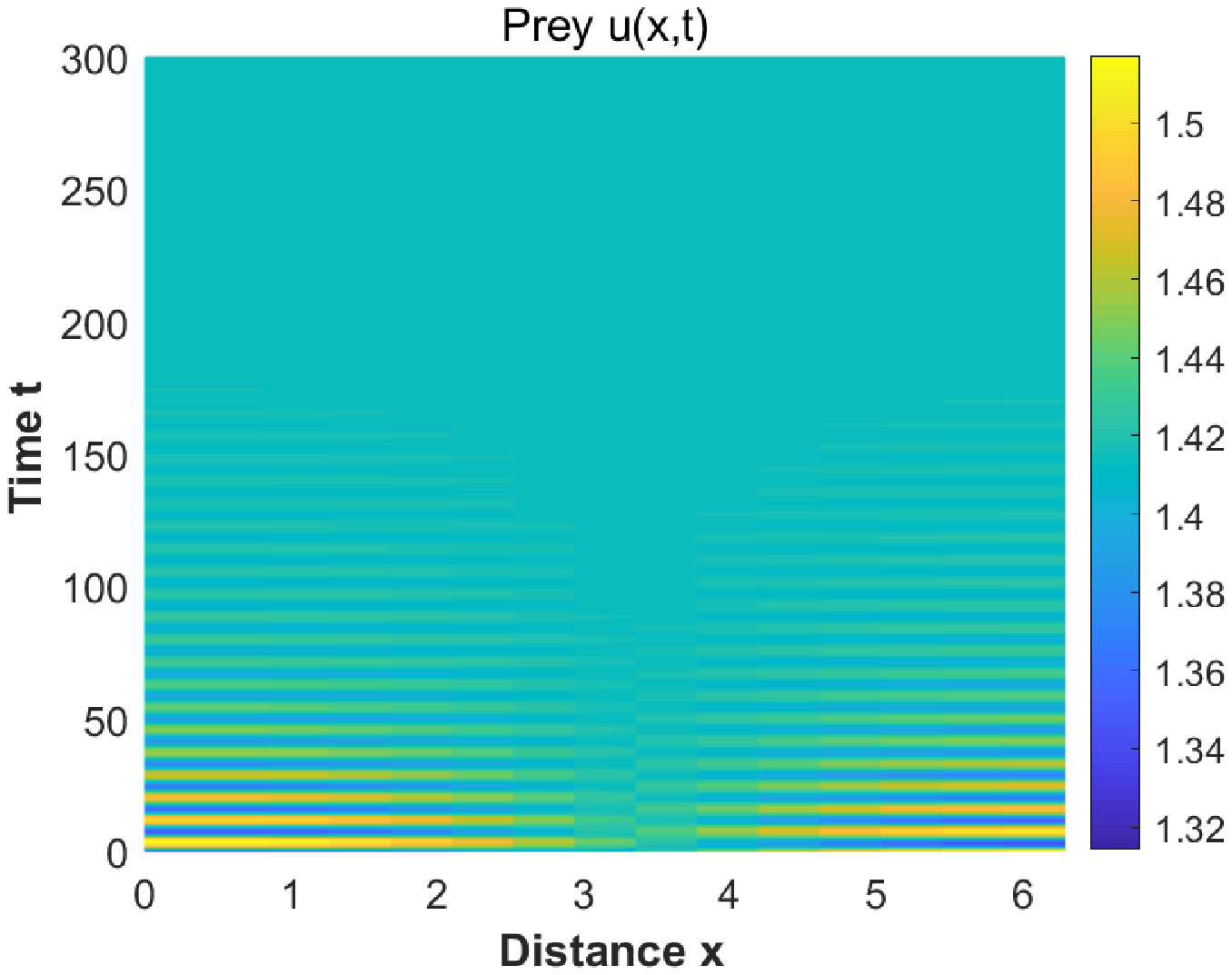}
\includegraphics[width=2.3in]{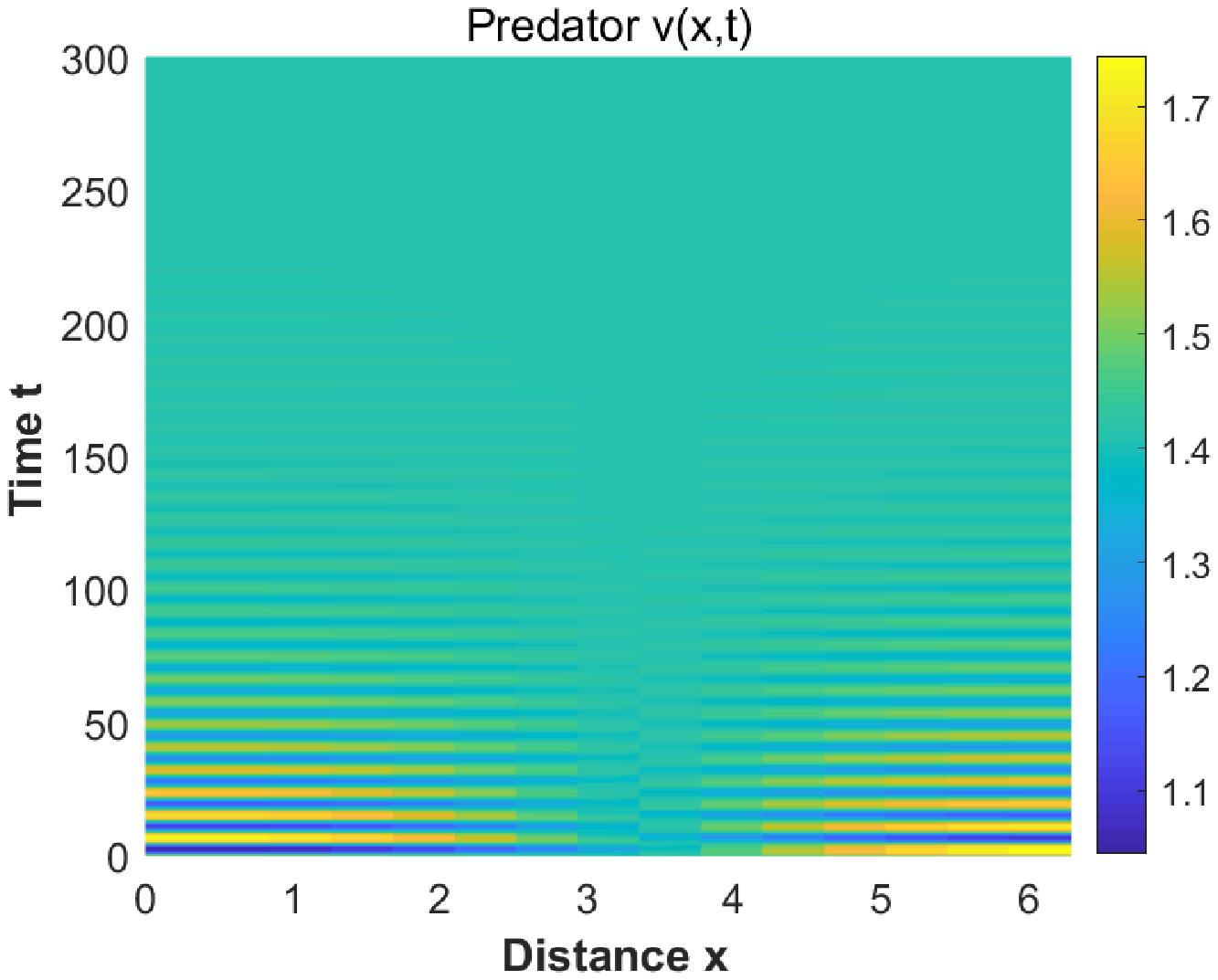} \\
\textbf{(c)} \hspace{5cm} \textbf{(d)} \\
\caption{For the parameters $\ell=2,~d_{11}=2,~d_{22}=3,~d_{21}=18,~\xi=0.06,~\beta=0.5,~m=0.5,~s=0.8$, when $\tau=3<\tau_{1,0}=6.1498$, the positive constant steady state $E_{*}\left(u_{*},v_{*}\right)=(1.4142,1.4142)$ is locally asymptotically stable. The initial values are $u_{0}(x)=1.4142-0.1\cos(x/2)$ and $v_{0}(x)=1.4142+0.1\cos(x/2)$.}
\label{fig:1}
\end{figure}

\begin{figure}[!htbp]
\centering
\includegraphics[width=2.3in]{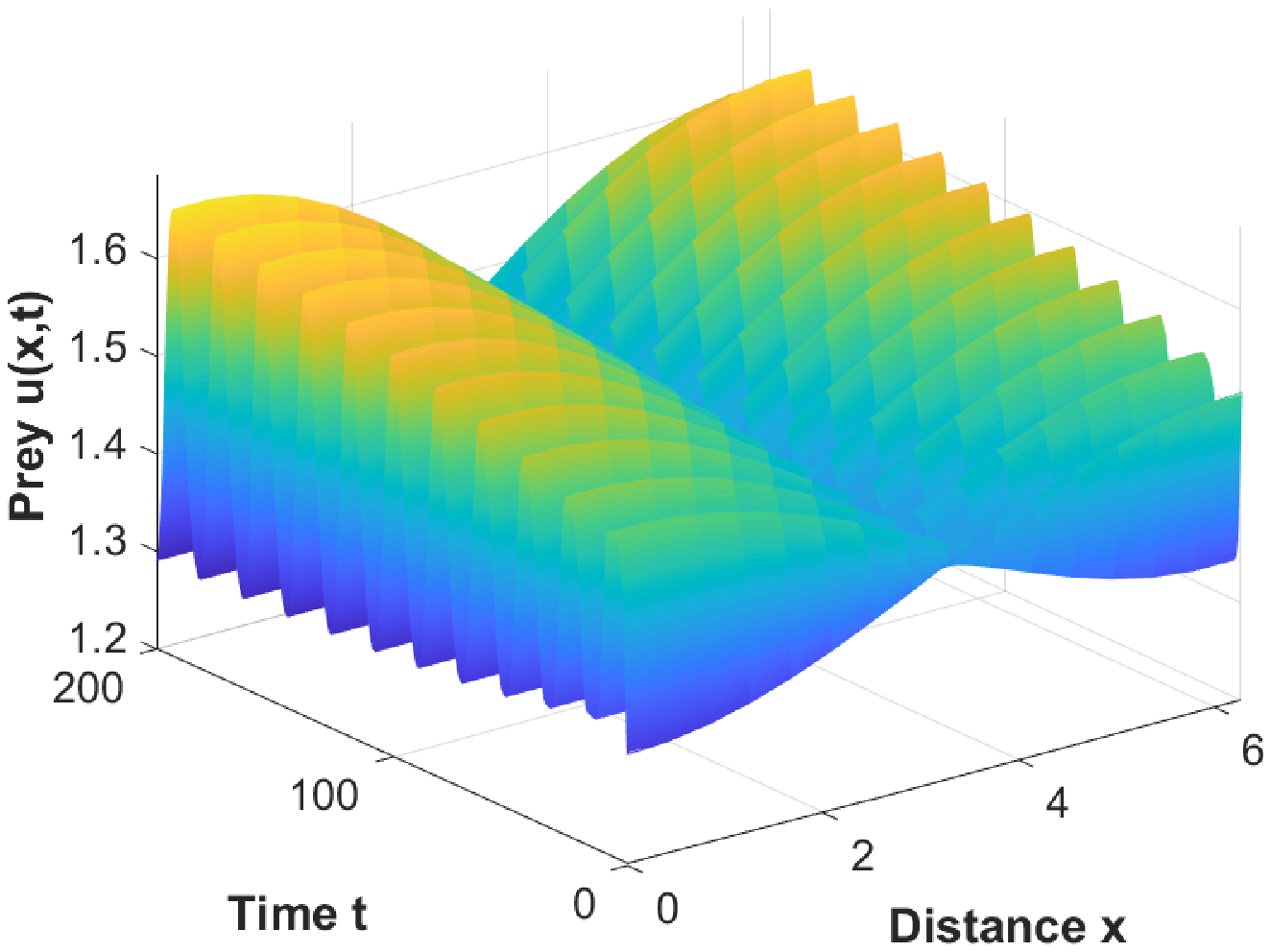}
\includegraphics[width=2.3in]{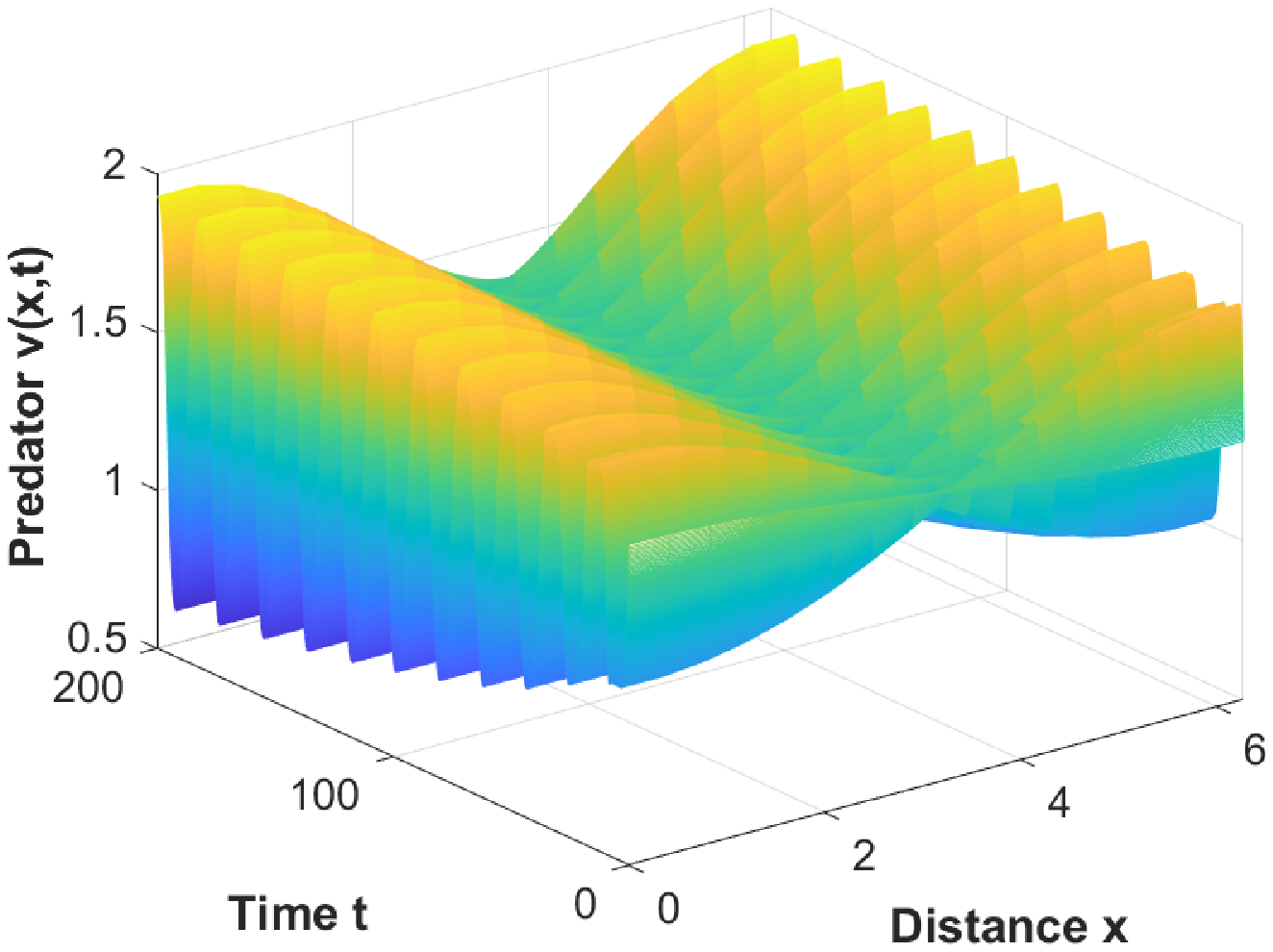} \\
\textbf{(a)} \hspace{5cm} \textbf{(b)} \\
\includegraphics[width=2.3in]{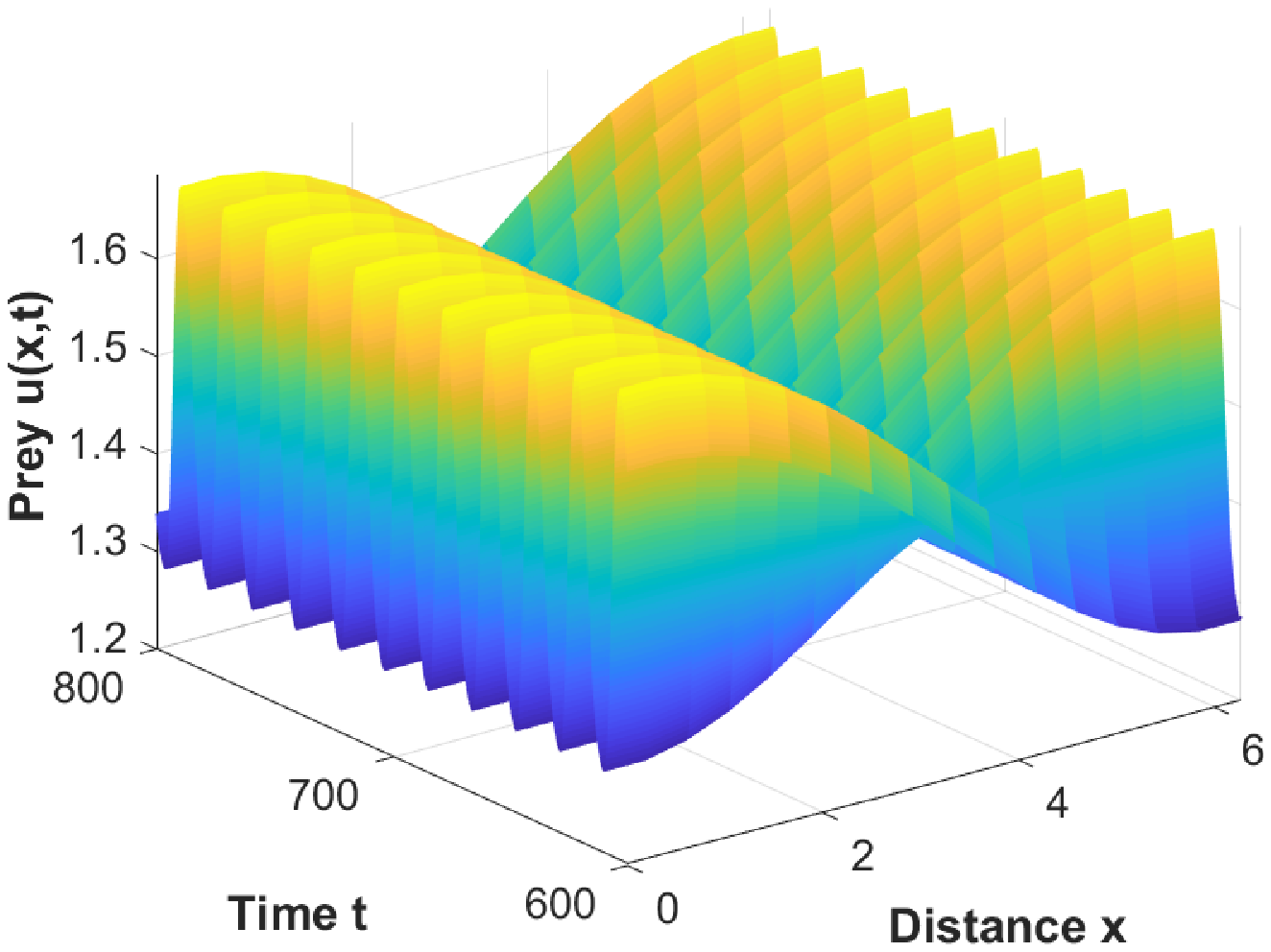}
\includegraphics[width=2.3in]{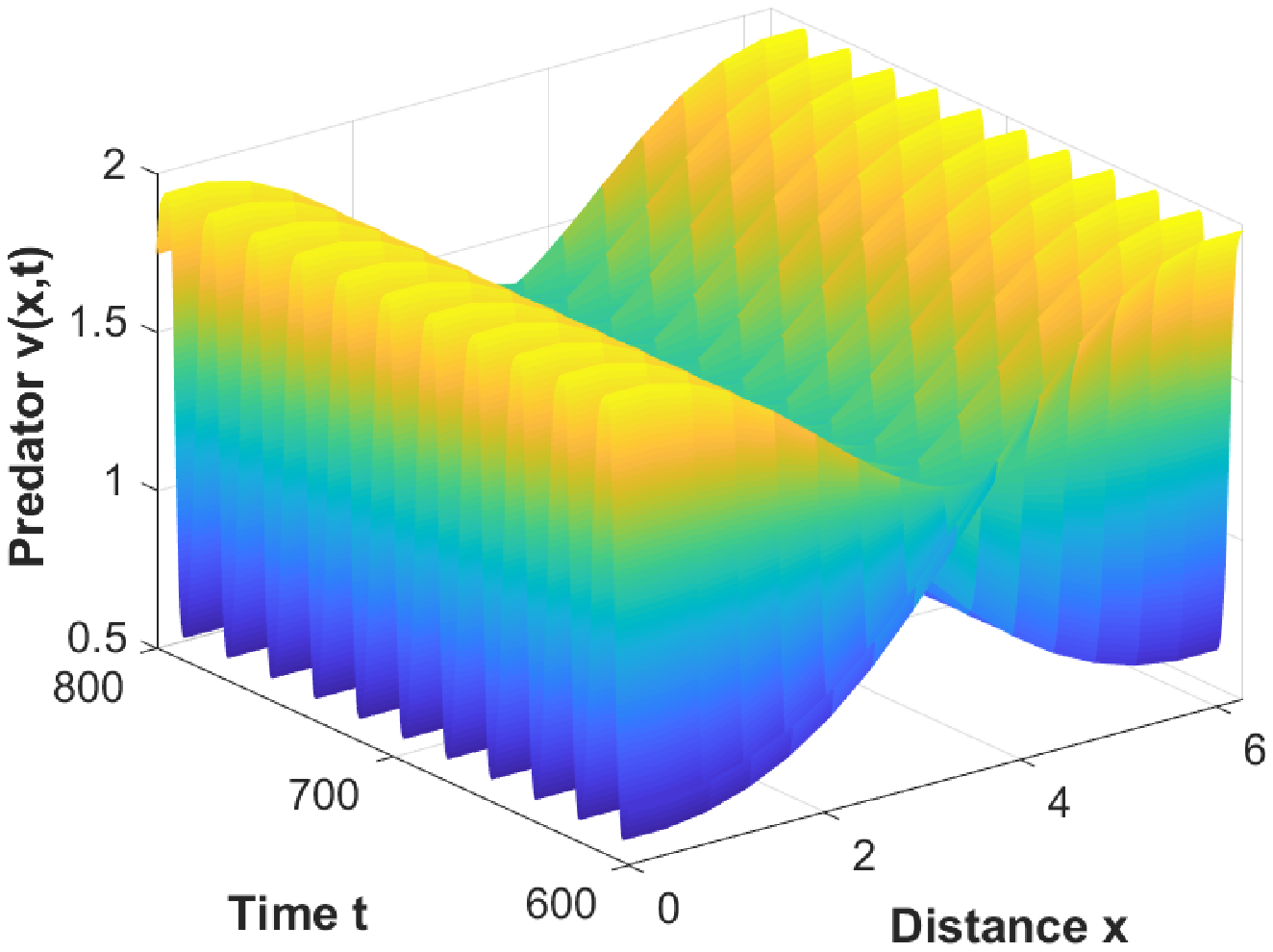} \\
\textbf{(c)} \hspace{5cm} \textbf{(d)} \\
\caption{For the parameters $\ell=2,~d_{11}=2,~d_{22}=3,~d_{21}=18,~\xi=0.06,~\beta=0.5,~m=0.5,~s=0.8$, when $\tau=8>\tau_{1,0}=6.1498$, there exists a stable spatially inhomogeneous periodic solution. (a) and (b) are the transient behaviours for $u(x,t)$ and $v(x,t)$, respectively, (c) and (d) are long-term behaviours for $u(x,t)$ and $v(x,t)$, respectively. The initial values are $u_{0}(x)=1.4142-0.1\cos(x/2)$ and $v_{0}(x)=1.4142+0.1\cos(x/2)$.}
\label{fig:2}
\end{figure}

\begin{figure}[!htbp]
\centering
\includegraphics[width=2.3in]{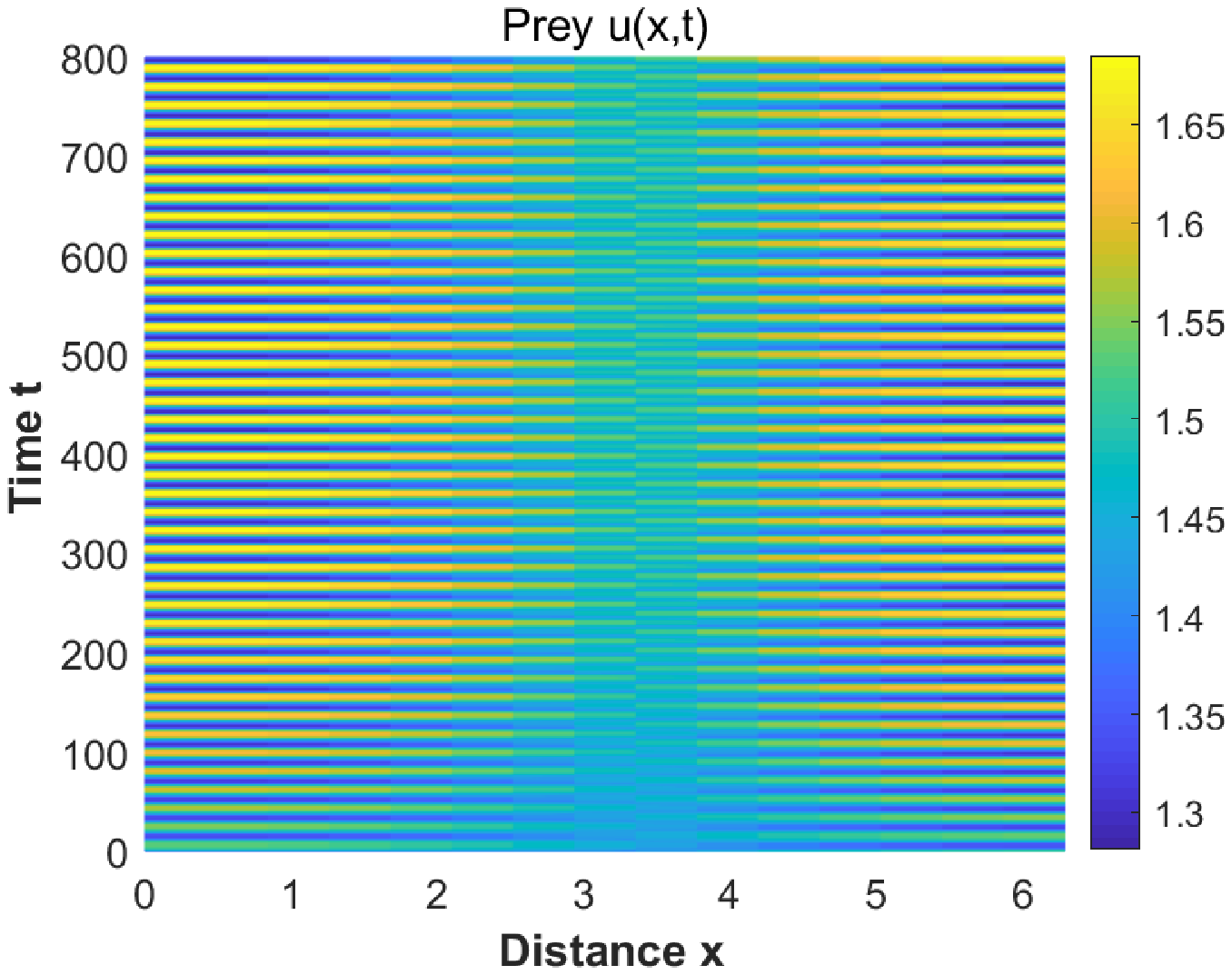}
\includegraphics[width=2.3in]{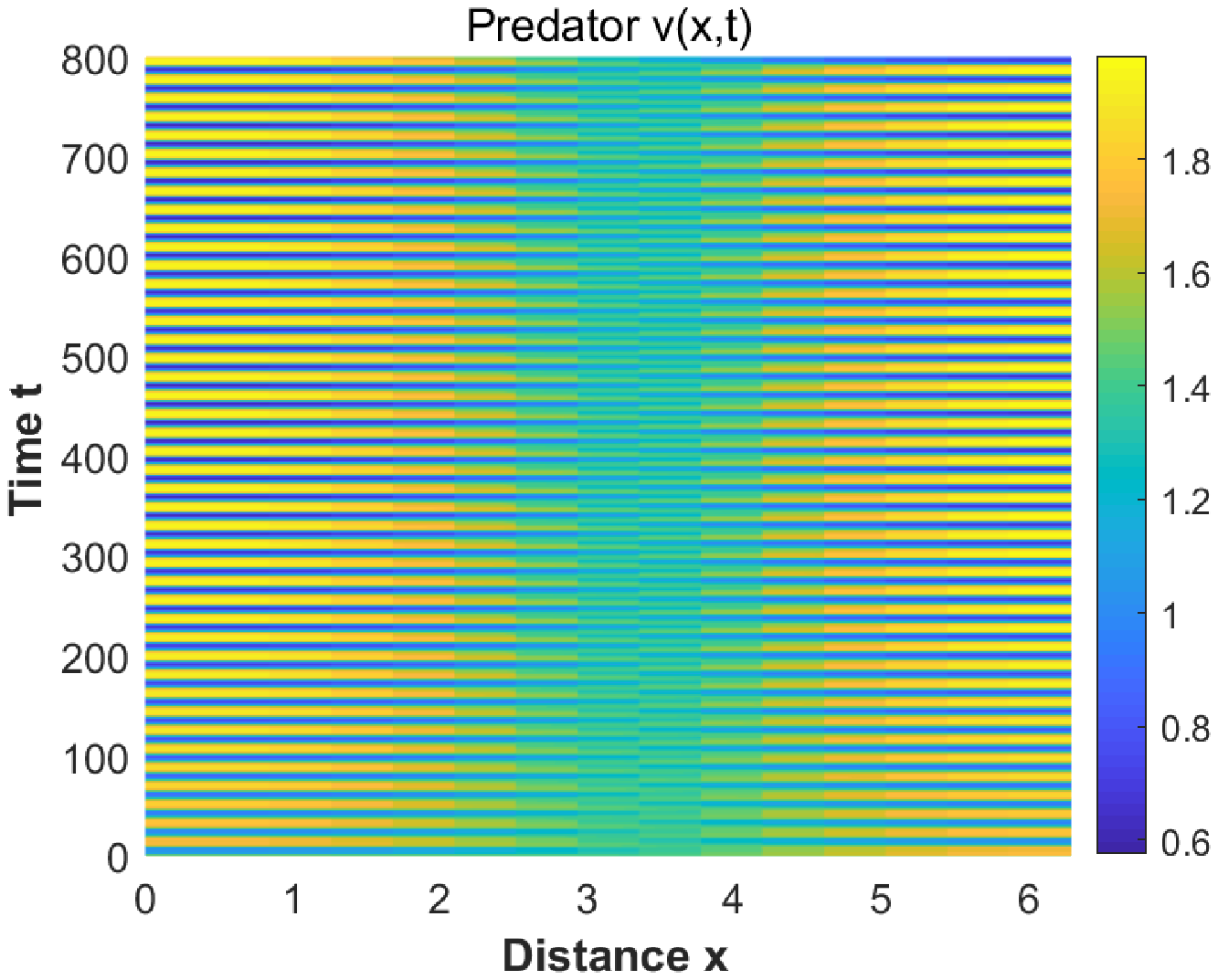} \\
\textbf{(a)} \hspace{5cm} \textbf{(b)} \\
\caption{For the parameters $\ell=2,~d_{11}=2,~d_{22}=3,~d_{21}=18,~\xi=0.06,~\beta=0.5,~m=0.5,~s=0.8$, when $\tau=8>\tau_{1,0}=6.1498$, there exists a stable spatially inhomogeneous periodic solution. The initial values are $u_{0}(x)=1.4142-0.1\cos(x/2)$ and $v_{0}(x)=1.4142+0.1\cos(x/2)$.}
\label{fig:3}
\end{figure}

If we set the parameters as follows
\begin{equation*}
\ell=2,~d_{11}=2,~d_{22}=3,~d_{21}=18,~\xi=0.06,~\beta=0.5,~m=0.5,~s=0.8,
\end{equation*}
then we can easily obtain that
\begin{equation*}\begin{aligned}
&a_{11}=1-2\beta u_{*}-\frac{mu_{*}}{(1+u_{*})^{2}}=-0.5355<0,~d_{11}d_{22}-d_{21}\xi u_{*}v_{*}=3.84>0, \\
&\operatorname{Det}(A)=a_{11}a_{22}-a_{12}a_{21}=0.6627>0,~d_{11}a_{22}+d_{22}a_{11}-a_{21}\xi u_{*}-d_{21}v_{*}a_{12}=4.1814>0, \\
&(d_{11}a_{22}+d_{22}a_{11}-a_{21}\xi u_{*}-d_{21}v_{*}a_{12})^{2}-4(d_{11}d_{22}-d_{21}\xi u_{*}v_{*})\operatorname{Det}(A)=7.3041>0.
\end{aligned}\end{equation*}
Therefore, the conditions $(C_{0})$ and $(C_{2})$ are satisfied under the above parameters settings. In the following, we mainly verify the conclusion in Lemma 4.4 (ii). According to (4.3) and (4.4), we have $E_{*}\left(u_{*},v_{*}\right)=(1.4142,1.4142)$,
\begin{equation*}
a_{11}=-0.5355,~a_{12}=-0.2929,~a_{21}=0.8,~a_{22}=-0.8.
\end{equation*}

By combining with (4.13), (4.14), (4.15) and (4.16), we have $n_{1}=0.8776$, $n_{2}=1.8935$, and consider that $n\in\mathbb{N}$, we have $\omega_{c}=\omega_{1}=0.418$ and $\tau_{c}=\tau_{1,0}=6.1498$. Moreover, by Lemma 4.4 (ii), we have the following proposition.

\begin{proposition}
For system (4.2) with the parameters $\ell=2,~d_{11}=2,~d_{22}=3,~d_{21}=18,~\xi=0.06,~\beta=0.5,~m=0.5,~s=0.8$, the positive constant steady state $E_{*}\left(u_{*},v_{*}\right)$ of system (4.2) is asymptotically stable for $0 \leq \tau<\tau_{1,0}=6.1498$ and unstable for $\tau>\tau_{1,0}=6.1498$. Furthermore, system (4.2) undergoes mode-1 Hopf bifurcation at $\tau=\tau_{1,0}=6.1498$.
\end{proposition}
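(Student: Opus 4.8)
The plan is to observe that Proposition 4.5 is nothing more than Lemma 4.4(ii) specialized to the stated parameter values, so the entire argument reduces to checking that the hypotheses of Case 4.2 and Lemma 4.4(ii) hold and then carrying out the arithmetic that identifies the bifurcating mode and the threshold. First I would compute the positive constant steady state from (4.3): since $\beta=0.5$ and $m=0.5$ give $R=\beta+m-1=0$, formula (4.3) yields $u_*=v_*=\frac{1}{2\beta}\sqrt{4\beta}=\sqrt{2}\approx 1.4142$. Substituting this into (4.4) produces $a_{11}\approx-0.5355$, $a_{12}\approx-0.2929$, $a_{21}=0.8$, $a_{22}=-0.8$; in particular $a_{11}<0$, so condition $(C_0)$ is satisfied.

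Next I would verify the structural hypothesis and condition $(C_2)$ by direct substitution into the expressions in Case 4.2: $d_{11}d_{22}-d_{21}\xi u_*v_*=6-18\cdot0.06\cdot2=3.84>0$; $\operatorname{Det}(A)=a_{11}a_{22}-a_{12}a_{21}\approx0.6627>0$; $d_{11}a_{22}+d_{22}a_{11}-a_{21}\xi u_*-d_{21}v_*a_{12}\approx4.1814>0$; and the discriminant quantity $\widetilde{A}_3=(d_{11}a_{22}+d_{22}a_{11}-a_{21}\xi u_*-d_{21}v_*a_{12})^2-4(d_{11}d_{22}-d_{21}\xi u_*v_*)\operatorname{Det}(A)\approx7.3041>0$. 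Thus all the hypotheses of Case 4.2, and hence of Lemma 4.4(ii), are in force, and Lemma 4.3 supplies the required transversality condition at each $\tau_{n,j}$.

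Then I would use (4.13)–(4.14) to compute the roots $\widetilde{x}_{1,2}$ of $\widetilde{Q}_n=0$ and the associated critical wave numbers $n_1=\ell\sqrt{\widetilde{x}_1}\approx0.8776$ and $n_2=\ell\sqrt{\widetilde{x}_2}\approx1.8935$. The key observation here is that $n=1$ is the unique positive integer strictly between $n_1$ and $n_2$, so by the geometric sign argument of Case 4.2 we have $Q_n<0$ only for $n=1$, and therefore (4.10) has exactly one positive root $\omega_1$ given by (4.15); numerically $\omega_c=\omega_1\approx0.418$, and (4.16) then gives the Hopf threshold $\tau_{1,0}\approx6.1498$. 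Since $n=1$ is the only admissible mode, $\tau_*=\min\{\tau_{n,0}:n_1<n<n_2,\ n\in\mathbb{N}\}=\tau_{1,0}$.

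Finally, Lemma 4.4(ii) applies verbatim and gives the stated conclusion: $E_*(u_*,v_*)$ is asymptotically stable for $0\le\tau<\tau_{1,0}=6.1498$, unstable for $\tau>\tau_{1,0}=6.1498$, and system (4.2) undergoes a mode-$1$ Hopf bifurcation at $\tau=\tau_{1,0}$. The argument involves no real difficulty beyond the numerical evaluations; the one point that deserves care is confirming that $n=1$ is indeed the \emph{unique} integer in the interval $(n_1,n_2)$ so that no other spatial mode competes for the first bifurcation, together with checking $Q_1<0$ (equivalently $\widetilde{Q}_1<0$) so that formulas (4.15)–(4.16) are the correct ones to apply.
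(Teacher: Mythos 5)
Your proposal is correct and follows essentially the same route as the paper: the paper also establishes Proposition 4.5 by verifying $(C_{0})$, $(C_{2})$ and $d_{11}d_{22}-d_{21}\xi u_{*}v_{*}>0$ numerically, computing $n_{1}=0.8776$ and $n_{2}=1.8935$ so that $n=1$ is the unique admissible mode, obtaining $\omega_{1}=0.418$ and $\tau_{1,0}=6.1498$ from (4.15)--(4.16), and then invoking Lemma 4.4(ii) (with transversality from Lemma 4.3). Your explicit emphasis on checking that $n=1$ is the \emph{unique} integer in $(n_{1},n_{2})$ is exactly the point that makes $\tau_{*}=\tau_{1,0}$, and all of your numerical values agree with the paper's.
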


For the parameters $\ell=2,~d_{11}=2,~d_{22}=3,~d_{21}=18,~\xi=0.06,~\beta=0.5,~m=0.5,~s=0.8$, according to Proposition 4.5, we know that system (4.2) undergoes a Hopf bifurcation at $\tau_{1,0}=6.1498$. Furthermore, the direction and stability of Hopf bifurcation can be determined by calculating $K_{1}K_{2}$ and $K_{2}$ using the procedures developed in Section 2. After a direct calculation using MATLAB software, we obtain
\begin{equation*}
K_{1}=0.016>0,~K_{2}=-0.9283<0,~K_{1}K_{2}=-0.0148<0,
\end{equation*}
which implies that the Hopf bifurcation at $\tau_{1,0}=6.1498$ is supercritical and stable.

When $\tau=3<\tau_{1,0}=6.1498$, Fig.1 (a)-(d) illustrate the evolution of the solution of system (4.2) starting from the initial values $u_{0}(x)=1.4142-0.1\cos(x/2)$ and $v_{0}(x)=1.4142+0.1\cos(x/2)$, finally converging to the positive constant steady state $E_{*}\left(u_{*},v_{*}\right)$. Furthermore, when $\tau=8>\tau_{1,0}=6.1498$, Fig.2 (a)-(d) illustrate the existence of the spatially homogeneous periodic solution with the initial values $u_{0}(x)=1.4142-0.1\cos(x/2)$ and $v_{0}(x)=1.4142+0.1\cos(x/2)$. When $\tau=8>\tau_{1,0}=6.1498$, the space-time diagrams for the prey population $u(x,t)$ and the predator population $v(x,t)$ are given in Fig.3 (a) and (b), respectively.

\subsubsection{Mode-2 Hopf bifurcation}

\begin{figure}[!htbp]
\centering
\includegraphics[width=2.3in]{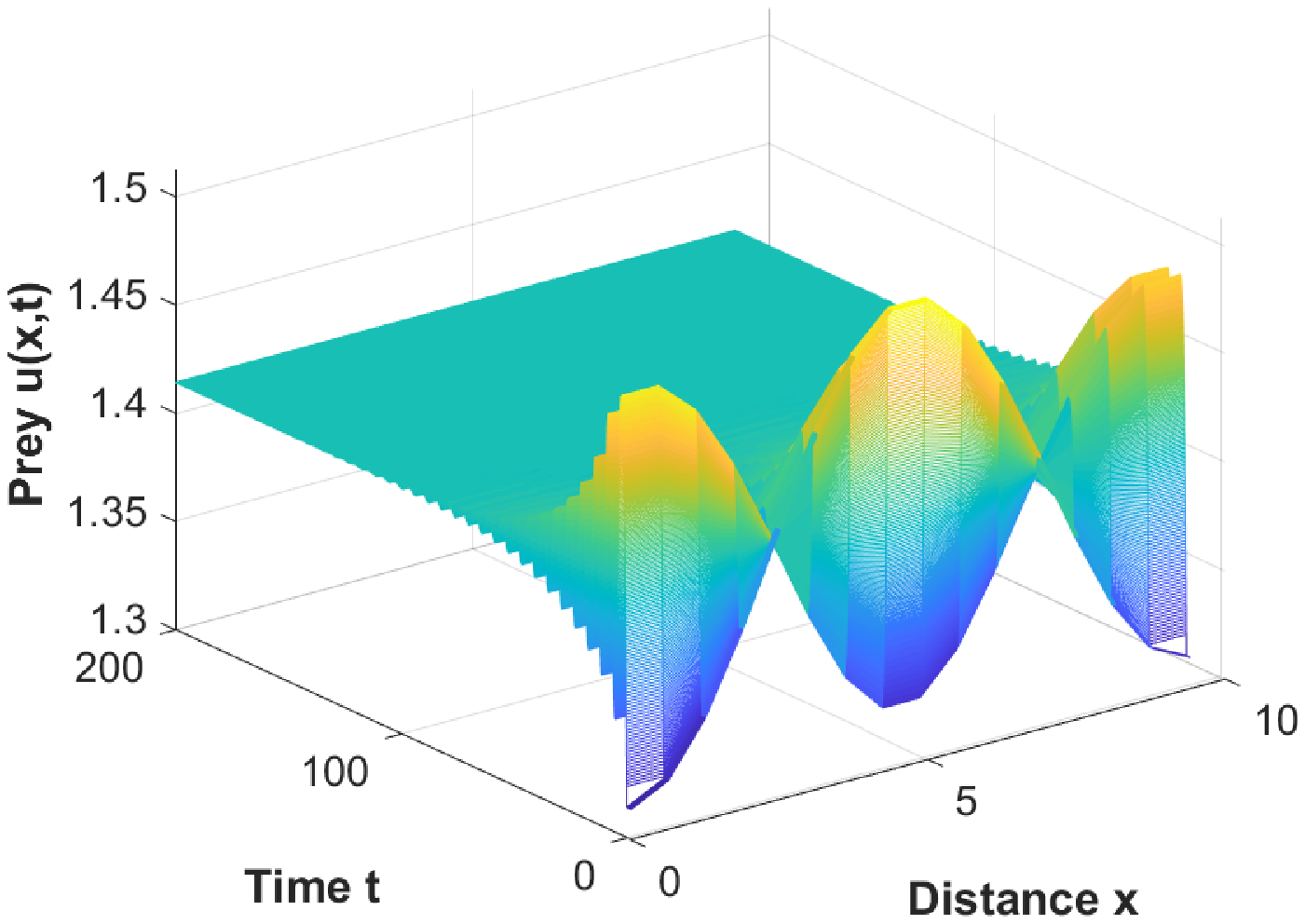}
\includegraphics[width=2.3in]{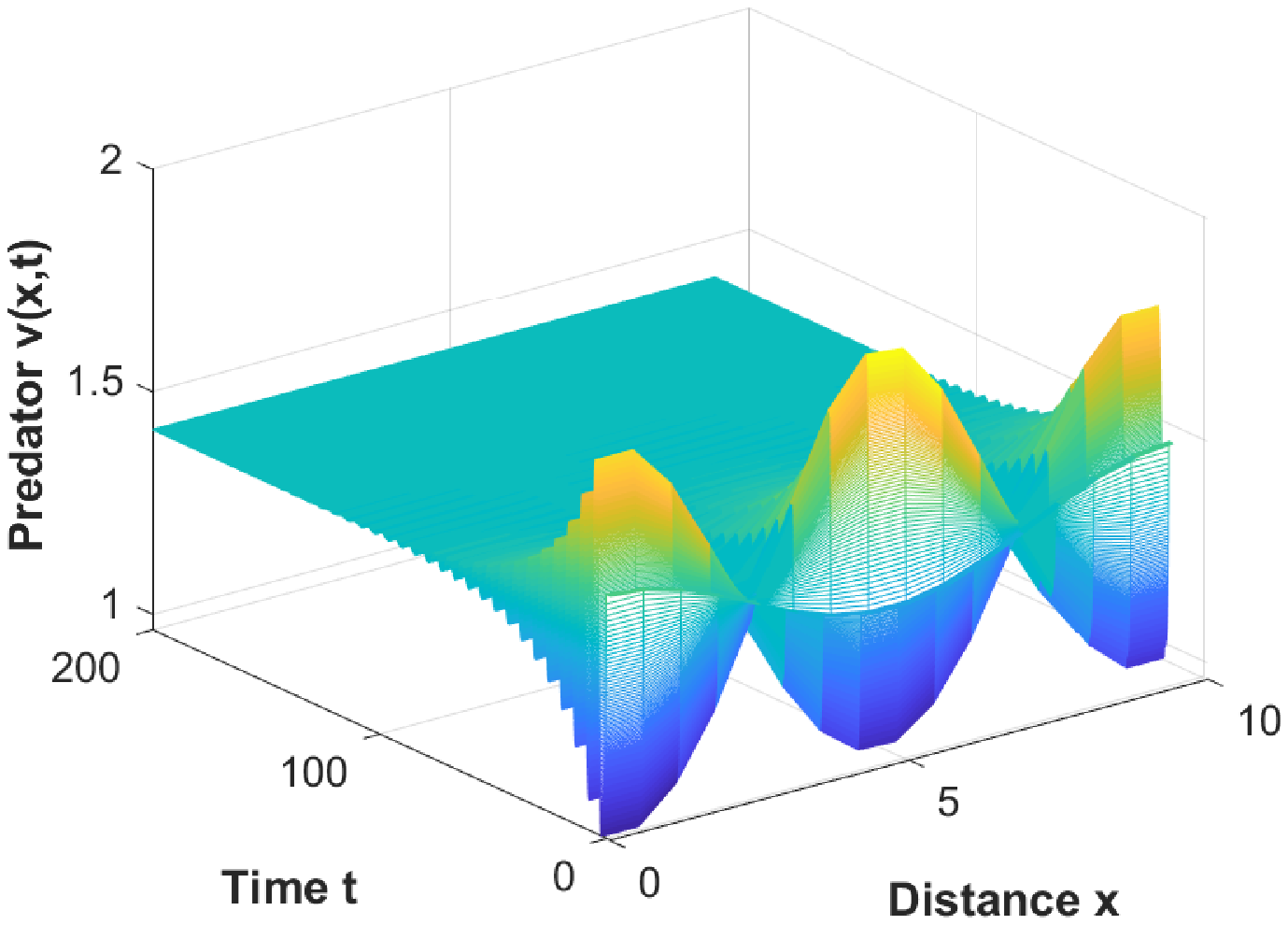} \\
\textbf{(a)} \hspace{5cm} \textbf{(b)} \\
\includegraphics[width=2.3in]{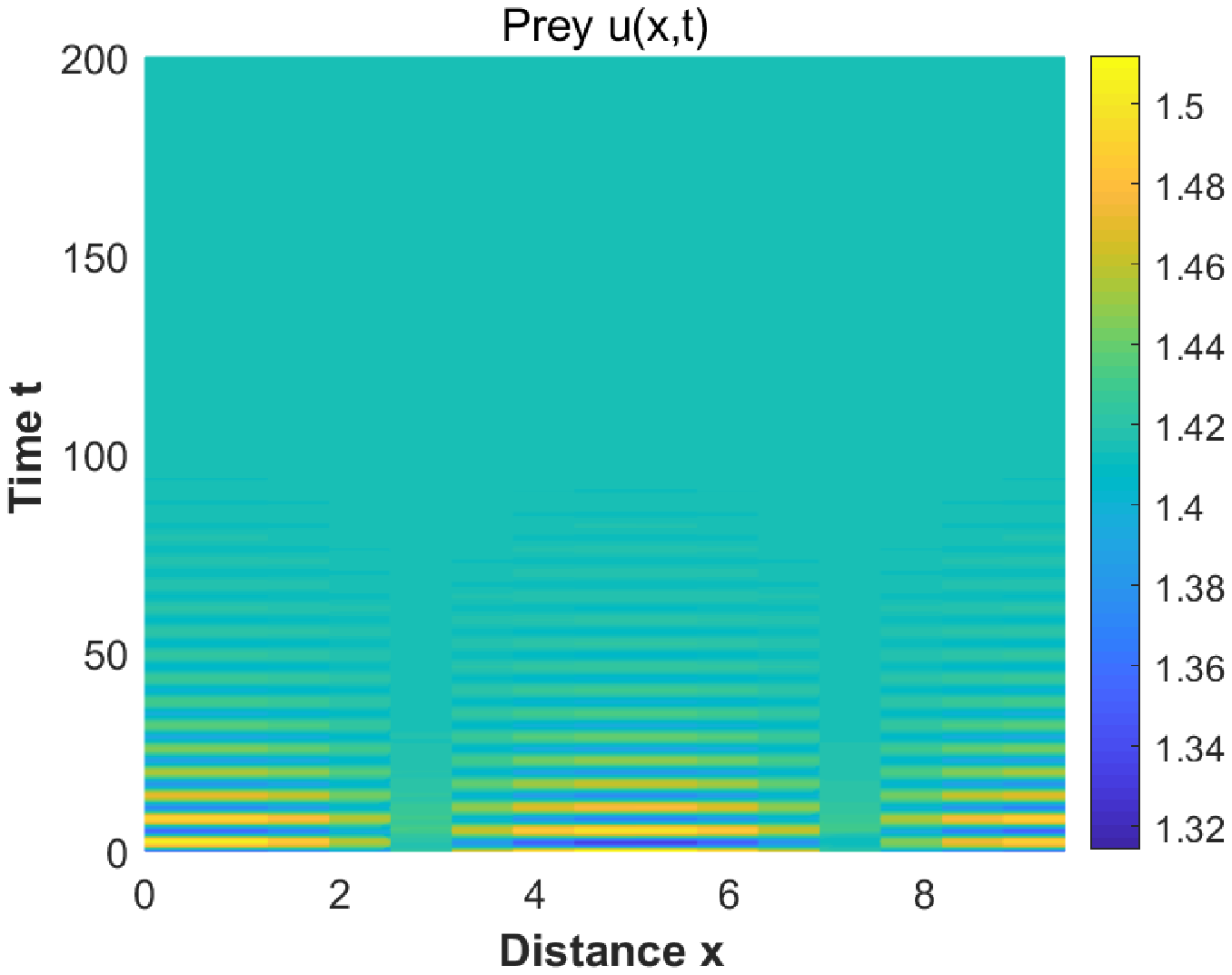}
\includegraphics[width=2.3in]{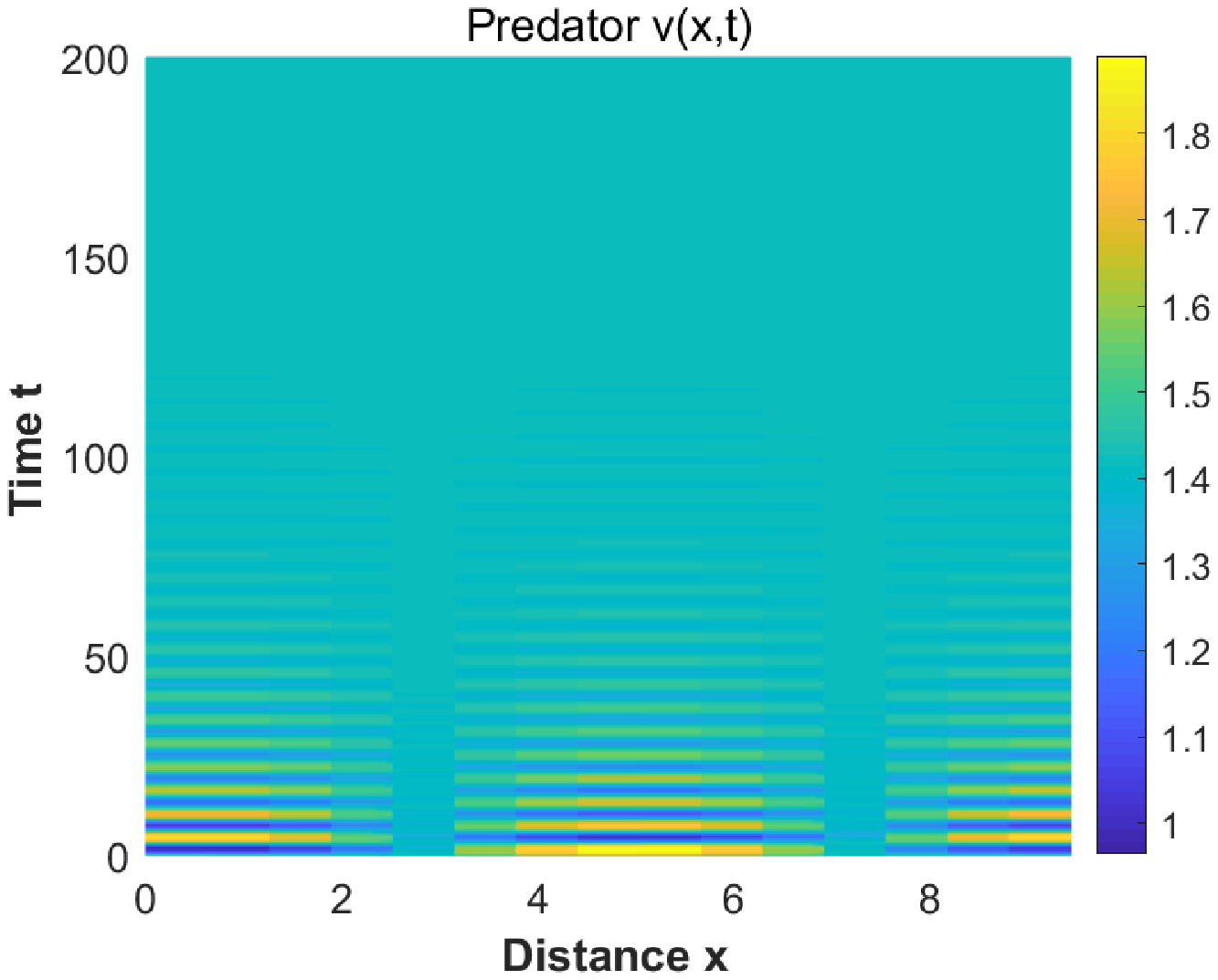} \\
\textbf{(c)} \hspace{5cm} \textbf{(d)} \\
\caption{For the parameters $\ell=3,~d_{11}=2,~d_{22}=3,~d_{21}=18,~\xi=0.06,~\beta=0.5,~m=0.5,~s=0.8$, when $\tau=2<\tau_{2,0}=3.5361$, the positive constant steady state $E_{*}\left(u_{*},v_{*}\right)=(1.4142,1.4142)$ is locally asymptotically stable. The initial values are $u_{0}(x)=1.4142-0.1\cos(2x/3)$ and $v_{0}(x)=1.4142+0.1\cos(2x/3)$.}
\label{fig:4}
\end{figure}

\begin{figure}[!htbp]
\centering
\includegraphics[width=2.3in]{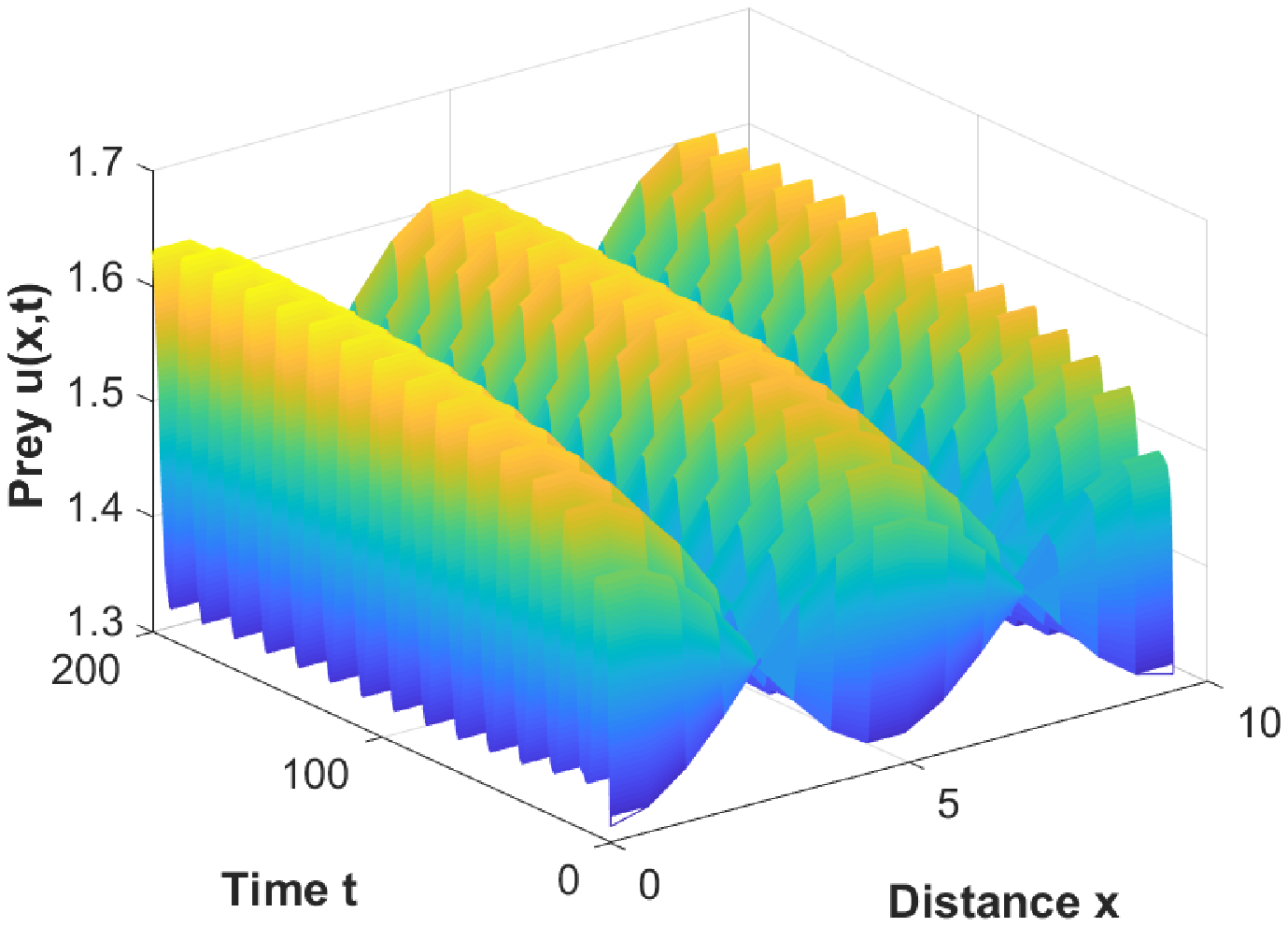}
\includegraphics[width=2.3in]{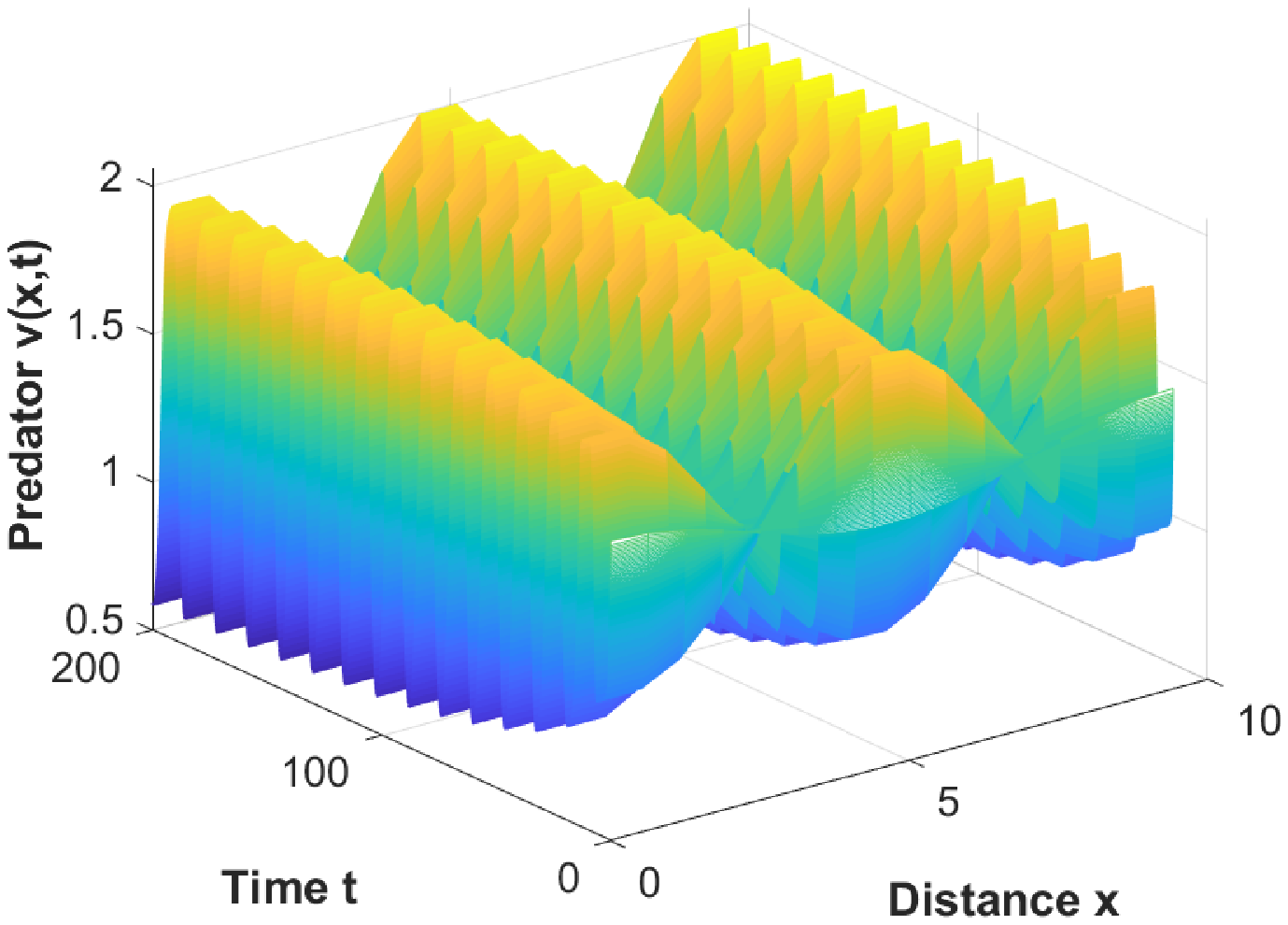} \\
\textbf{(a)} \hspace{5cm} \textbf{(b)} \\
\includegraphics[width=2.3in]{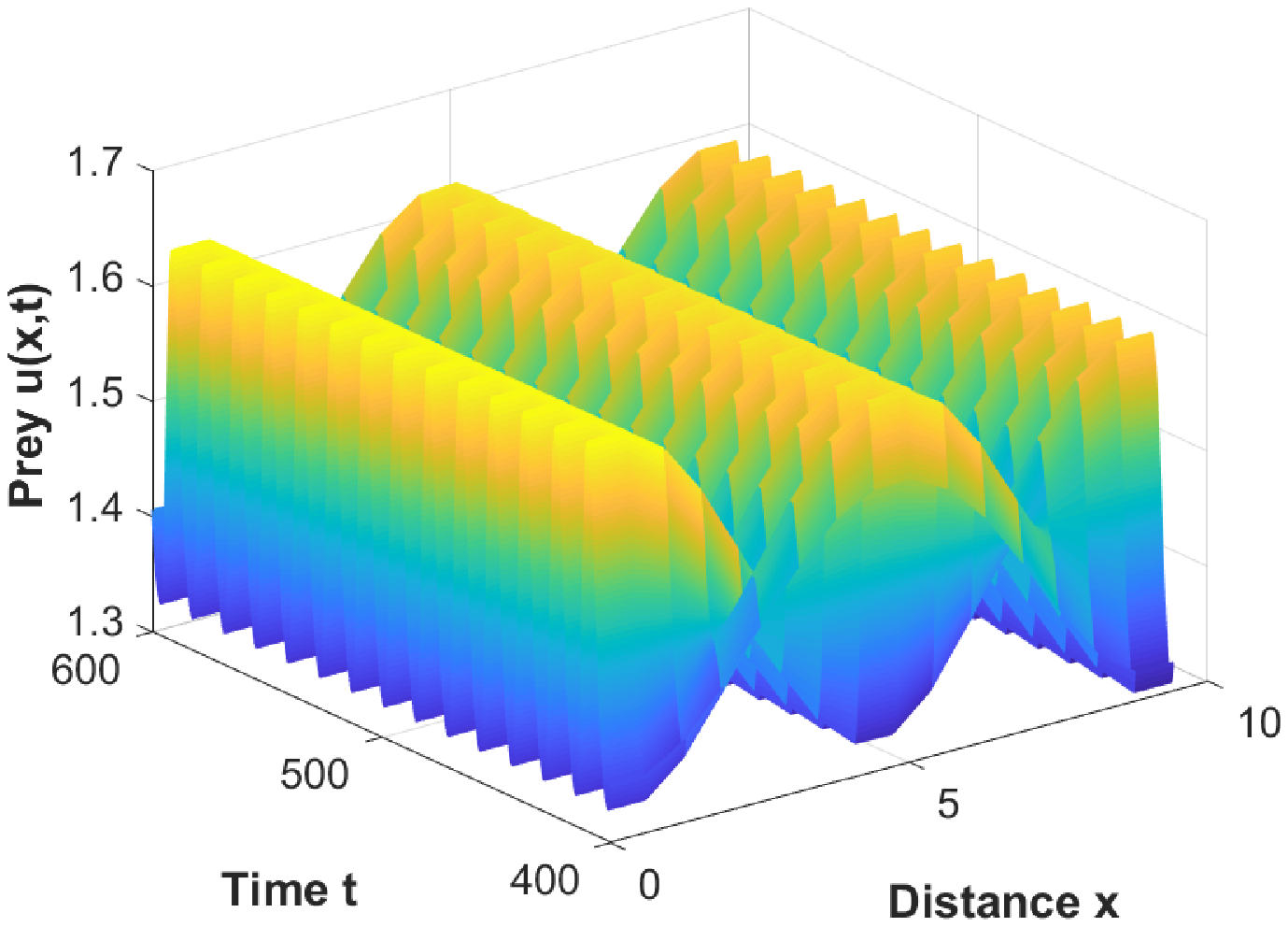}
\includegraphics[width=2.3in]{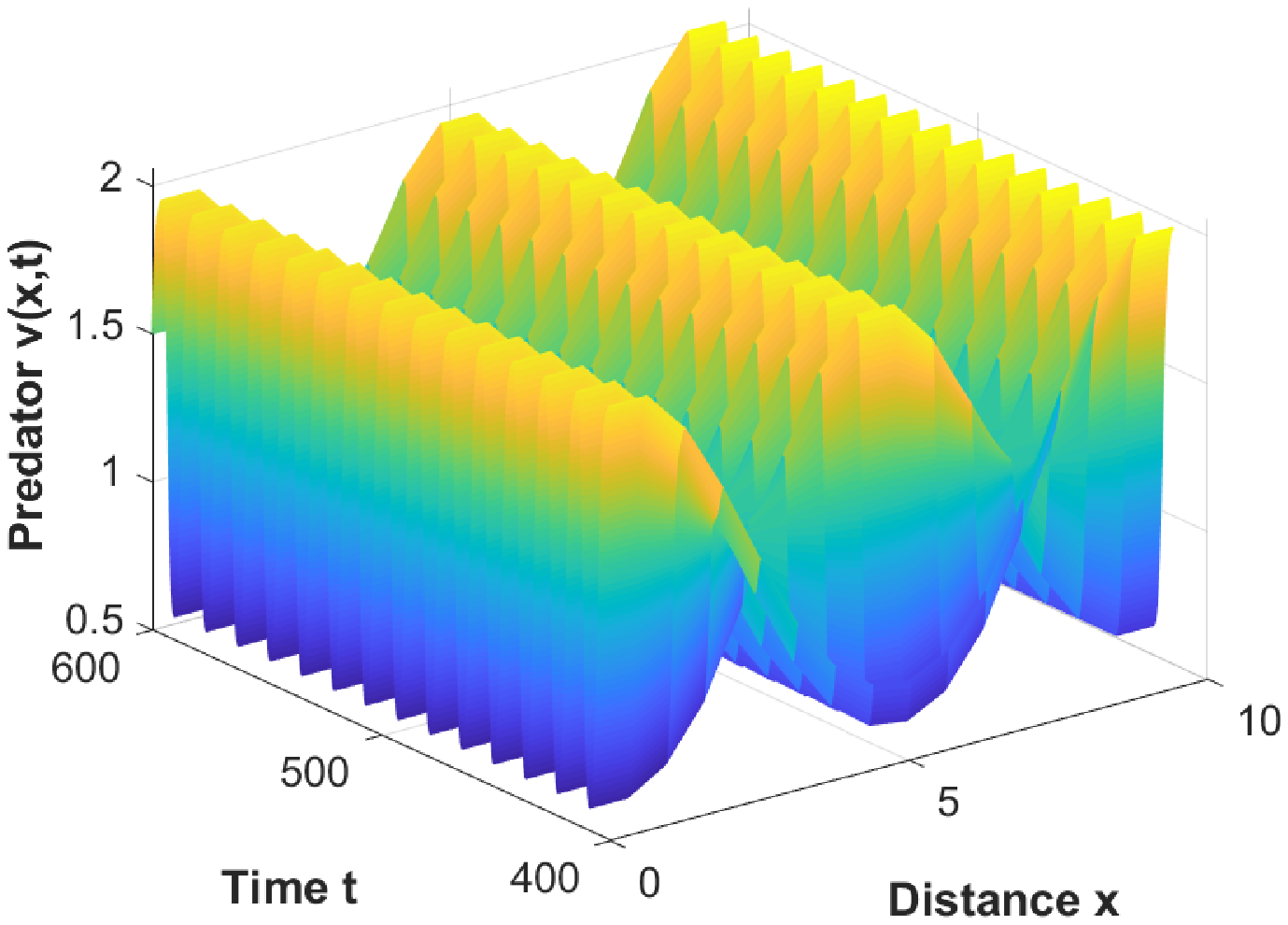} \\
\textbf{(c)} \hspace{5cm} \textbf{(d)} \\
\caption{For the parameters $\ell=3,~d_{11}=2,~d_{22}=3,~d_{21}=18,~\xi=0.06,~\beta=0.5,~m=0.5,~s=0.8$, when $\tau=6>\tau_{2,0}=3.5361$, there exists a stable spatially inhomogeneous periodic solution. (a) and (b) are the transient behaviours for $u(x,t)$ and $v(x,t)$, respectively, (c) and (d) are long-term behaviours for $u(x,t)$ and $v(x,t)$, respectively. The initial values are $u_{0}(x)=1.4142-0.1\cos(2x/3)$ and $v_{0}(x)=1.4142+0.1\cos(2x/3)$.}
\label{fig:5}
\end{figure}

\begin{figure}[!htbp]
\centering
\includegraphics[width=2.3in]{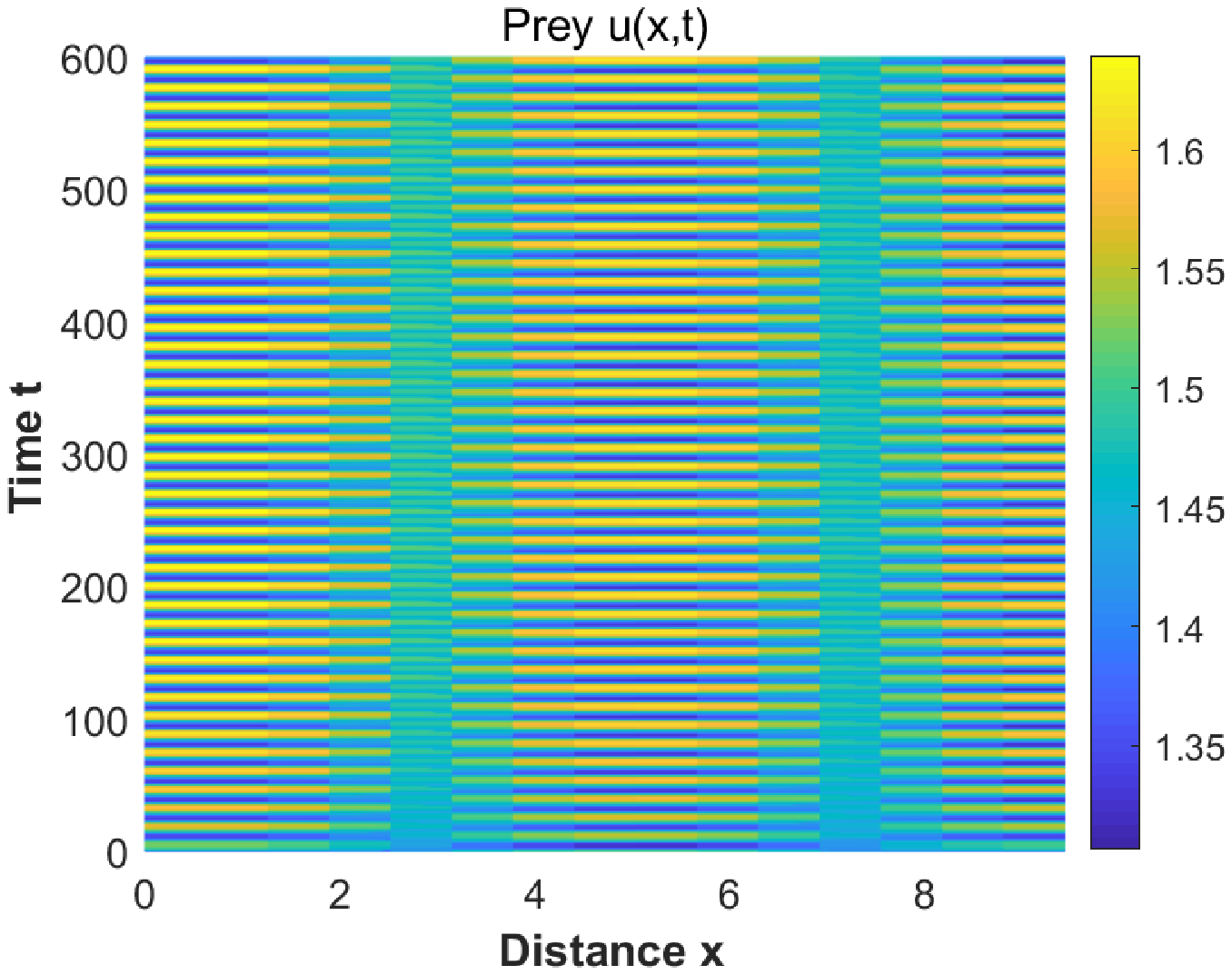}
\includegraphics[width=2.3in]{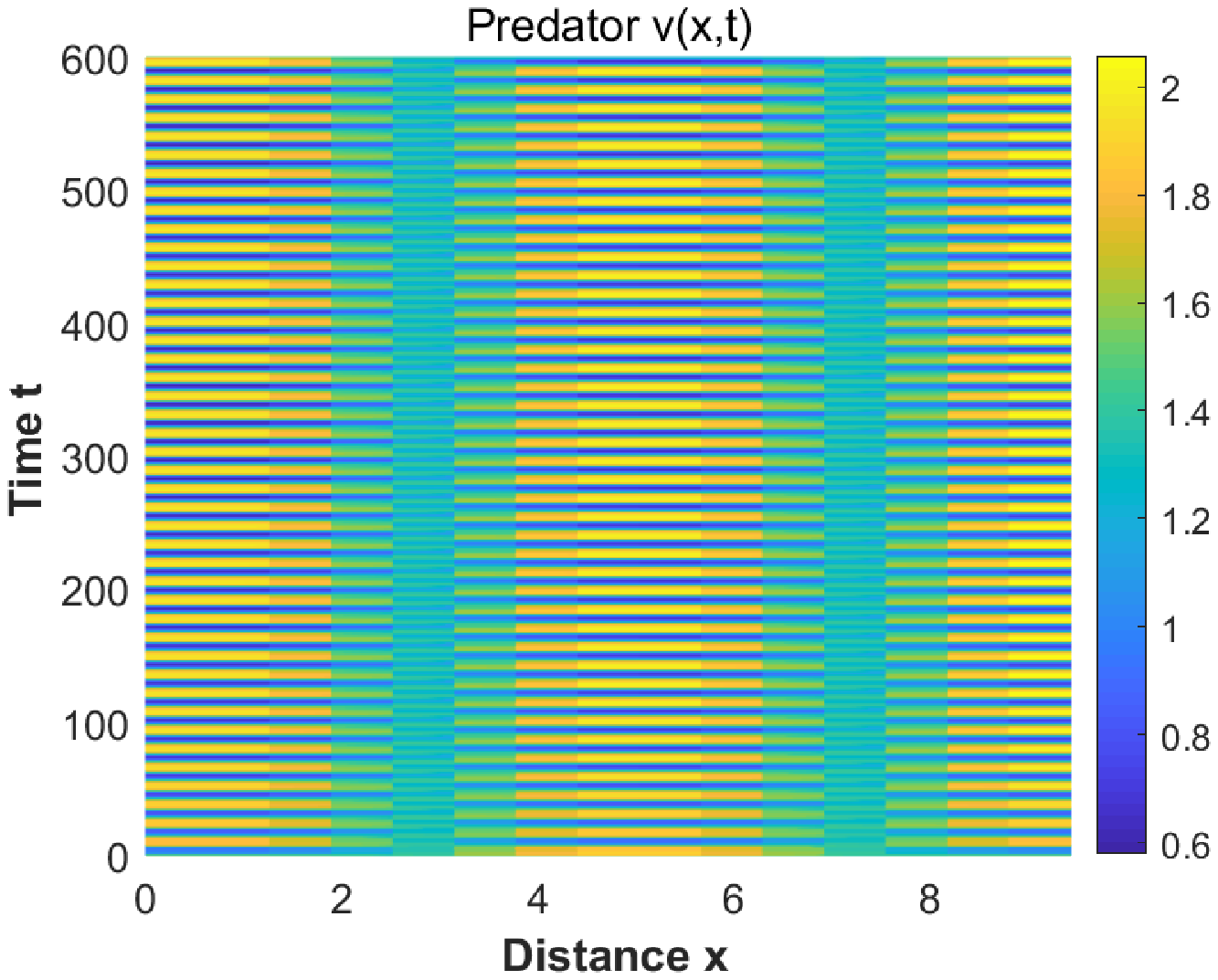} \\
\textbf{(a)} \hspace{5cm} \textbf{(b)} \\
\caption{For the parameters $\ell=3,~d_{11}=2,~d_{22}=3,~d_{21}=18,~\xi=0.06,~\beta=0.5,~m=0.5,~s=0.8$, when $\tau=6>\tau_{2,0}=3.5361$, there exists a stable spatially inhomogeneous periodic solution. The initial values are $u_{0}(x)=1.4142-0.1\cos(2x/3)$ and $v_{0}(x)=1.4142+0.1\cos(2x/3)$.}
\label{fig:6}
\end{figure}

If we set the parameters as follows
\begin{equation*}
\ell=3,~d_{11}=2,~d_{22}=3,~d_{21}=18,~\xi=0.06,~\beta=0.5,~m=0.5,~s=0.8,
\end{equation*}
then we can also easily obtain that
\begin{equation*}\begin{aligned}
&a_{11}=1-2\beta u_{*}-\frac{mu_{*}}{(1+u_{*})^{2}}=-0.5355<0,~d_{11}d_{22}-d_{21}\xi u_{*}v_{*}=3.84>0, \\
&\operatorname{Det}(A)=a_{11}a_{22}-a_{12}a_{21}=0.6627>0,~d_{11}a_{22}+d_{22}a_{11}-a_{21}\xi u_{*}-d_{21}v_{*}a_{12}=4.1814>0, \\
&(d_{11}a_{22}+d_{22}a_{11}-a_{21}\xi u_{*}-d_{21}v_{*}a_{12})^{2}-4(d_{11}d_{22}-d_{21}\xi u_{*}v_{*})\operatorname{Det}(A)=7.3041>0.
\end{aligned}\end{equation*}
Therefore, the conditions $(C_{0})$ and $(C_{2})$ are satisfied under the above parameters settings. In the following, we mainly verify the conclusion in Lemma 4.4 (ii). According to (4.3) and (4.4), we have $E_{*}\left(u_{*},v_{*}\right)=(1.4142,1.4142)$,
\begin{equation*}
a_{11}=-0.5355,~a_{12}=-0.2929,~a_{21}=0.8,~a_{22}=-0.8.
\end{equation*}

By combining with (4.13), (4.14), (4.15) and (4.16), we have $n_{1}=1.3164$, $n_{2}=2.8403$, and consider that $n\in\mathbb{N}$, we have $\omega_{c}=\omega_{2}=0.6870$ and $\tau_{c}=\tau_{2,0}=3.5361$. Moreover, by Lemma 4.4 (ii), we have the following proposition.

\begin{proposition}
For system (4.2) with the parameters $\ell=3,~d_{11}=2,~d_{22}=3,~d_{21}=18,~\xi=0.06,~\beta=0.5,~m=0.5,~s=0.8$, the positive constant steady state $E_{*}\left(u_{*},v_{*}\right)$ of system (4.2) is asymptotically stable for $0 \leq \tau<\tau_{2,0}=3.5361$ and unstable for $\tau>\tau_{2,0}=3.5361$. Furthermore, system (4.2) undergoes mode-2 Hopf bifurcation at $\tau=\tau_{2,0}=3.5361$.
\end{proposition}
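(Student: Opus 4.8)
The plan is to obtain Proposition 4.6 as a direct specialization of Lemma 4.4(ii): once the hypotheses of that lemma are checked for the parameter set $\ell=3$, $d_{11}=2$, $d_{22}=3$, $d_{21}=18$, $\xi=0.06$, $\beta=0.5$, $m=0.5$, $s=0.8$, the stability/instability split at $\tau_{*}$ and the occurrence of a Hopf bifurcation follow, so it only remains to identify $\tau_{*}$ and the bifurcating wave number. First I would substitute these values into (4.3) and (4.4) to get $E_{*}(u_{*},v_{*})=(1.4142,1.4142)$ and $a_{11}=-0.5355$, $a_{12}=-0.2929$, $a_{21}=0.8$, $a_{22}=-0.8$. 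Since $a_{11}<0$, condition $(C_{0})$ holds (and then $\operatorname{Tr}(A)=a_{11}+a_{22}<0$, so $T_{n}<0$ for every $n\in\mathbb{N}_{0}$). A short arithmetic check --- already recorded just above the statement --- gives $\operatorname{Det}(A)=0.6627>0$, $d_{11}d_{22}-d_{21}\xi u_{*}v_{*}=3.84>0$, $d_{11}a_{22}+d_{22}a_{11}-a_{21}\xi u_{*}-d_{21}v_{*}a_{12}=4.1814>0$ and the discriminant $(d_{11}a_{22}+d_{22}a_{11}-a_{21}\xi u_{*}-d_{21}v_{*}a_{12})^{2}-4(d_{11}d_{22}-d_{21}\xi u_{*}v_{*})\operatorname{Det}(A)=7.3041>0$, so condition $(C_{2})$ and the sign hypothesis $d_{11}d_{22}-d_{21}\xi u_{*}v_{*}>0$ of Case 4.2 are satisfied.

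Next I would use (4.13)--(4.14) to compute the two roots of $\widetilde{Q}_{n}=0$ regarded as a quadratic in $n^{2}/\ell^{2}$, and pass to $n_{1}=\ell\sqrt{\widetilde{x}_{1}}=1.3164$, $n_{2}=\ell\sqrt{\widetilde{x}_{2}}=2.8403$. By the sign argument in Case 4.2, $Q_{n}=\Gamma_{n}(0)\widetilde{Q}_{n}<0$ exactly for integers $n$ with $n_{1}<n<n_{2}$, while $Q_{n}\ge 0$ otherwise; here the open interval $(1.3164,2.8403)$ contains the single integer $n=2$. Hence for every wave number $n\ne 2$ one is in the situation of Case 4.1 (together with $(C_{0})$), so (4.10) has no positive root and the factor $\Gamma_{n}$ of the characteristic equation contributes only eigenvalues with negative real part for all $\tau\ge 0$; the only crossing of the imaginary axis comes from $n=2$. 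For $n=2$, (4.15) gives $\omega_{c}=\omega_{2}=0.6870$ and (4.16) gives the delay values $\tau_{2,j}$, in particular $\tau_{2,0}=3.5361$. Since $n=2$ is the unique admissible mode, $\tau_{*}=\min\{\tau_{n,0}:n_{1}<n<n_{2},\ n\in\mathbb{N}\}=\tau_{2,0}$.

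I would then recall Lemma 4.3, whose hypotheses coincide with those just verified together with $n_{1}<2<n_{2}$, to secure the transversality condition $\left.\frac{d\operatorname{Re}(\lambda(\tau))}{d\tau}\right|_{\tau=\tau_{2,0}}>0$, so that Assumption 2.1 is met at $\tau=\tau_{2,0}$. With all hypotheses of Lemma 4.4(ii) in place, its conclusion yields that $E_{*}(u_{*},v_{*})$ is asymptotically stable for $0\le\tau<\tau_{*}=\tau_{2,0}=3.5361$ and unstable for $\tau>\tau_{2,0}$, and that a Hopf bifurcation occurs at $\tau=\tau_{2,0}$; because the bifurcation is driven by the eigenfunction with wave number $n=2$, it is a mode-2 Hopf bifurcation, which is the assertion of the proposition.

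Since every computation here is elementary, there is no genuine analytic difficulty. The one point demanding care is the mode-counting step: one must confirm that $(n_{1},n_{2})=(1.3164,2.8403)$ contains exactly the integer $2$ and no other, and --- via Case 4.1 --- that no other wave number can produce a purely imaginary eigenvalue at any $\tau\ge 0$, so that $\tau_{*}$ is unambiguously attained at $n=2$ and the bifurcating mode is genuinely mode-2 rather than a competing wave number at a smaller delay.
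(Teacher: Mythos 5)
Your proposal is correct and follows essentially the same route as the paper: verify $(C_{0})$, $(C_{2})$ and $d_{11}d_{22}-d_{21}\xi u_{*}v_{*}>0$ numerically, compute $n_{1}=1.3164$ and $n_{2}=2.8403$ so that $n=2$ is the only admissible wave number, obtain $\omega_{2}=0.6870$ and $\tau_{2,0}=3.5361$ from (4.15)--(4.16), and invoke Lemma 4.3 and Lemma 4.4(ii). If anything, you are more explicit than the paper about the mode-counting step and why $\tau_{*}=\tau_{2,0}$, which is a welcome addition but not a different method.
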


For the parameters $\ell=3,~d_{11}=2,~d_{22}=3,~d_{21}=18,~\xi=0.06,~\beta=0.5,~m=0.5,~s=0.8$, according to Proposition 4.6, we know that system (4.2) undergoes a Hopf bifurcation at $\tau_{2,0}=3.5361$. Furthermore, the direction and stability of Hopf bifurcation can be determined by calculating $K_{1}K_{2}$ and $K_{2}$ using the procedures developed in Section 2. After a direct calculation using MATLAB software, we obtain
\begin{equation*}
K_{1}=0.0410>0,~K_{2}=-1.3669<0,~K_{1}K_{2}=-0.0561<0,
\end{equation*}
which implies that the Hopf bifurcation at $\tau_{2,0}=3.5361$ is supercritical and stable.

When $\tau=2<\tau_{2,0}=3.5361$, Fig.4 (a)-(d) illustrate the evolution of the solution of system (4.2) starting from the initial values $u_{0}(x)=1.4142-0.1\cos(2x/3)$ and $v_{0}(x)=1.4142+0.1\cos(2x/3)$, finally converging to the positive constant steady state $E_{*}\left(u_{*},v_{*}\right)$. Furthermore, when $\tau=6>\tau_{2,0}=3.5361$, Fig.5 (a)-(d) illustrate the existence of the spatially homogeneous periodic solution with the initial values $u_{0}(x)=1.4142-0.1\cos(2x/3)$ and $v_{0}(x)=1.4142+0.1\cos(2x/3)$. When $\tau=6>\tau_{2,0}=3.5361$, the space-time diagrams for the prey population $u(x,t)$ and the predator population $v(x,t)$ are given in Fig.6 (a) and (b), respectively.

\section{Conclusion and discussion}
\label{sec:5}

In this paper, the diffusive predator-prey system with spatial memory and predator-taxis is proposed, and we derive an algorithm for calculating the normal form of Hopf bifurcation in this system. As a real application, we consider the Holling-Tanner model with spatial memory and predator-taxis. By carrying out the stability and Hopf bifurcation analysis for the Holling-Tanner model with spatial memory and predator-taxis, the inhomogeneous spatial patterns, i.e., two stable spatially inhomogeneous periodic solutions are found. Furthermore, the supercritical and stable mode-1 and mode-2 Hopf bifurcation periodic solutions are found by using our newly developed algorithm. Numerical simulations verify the results of our theoretical analysis and give us a more intuitive display.

It is worth mentioning that in this paper, for our proposed diffusive predator-prey system with spatial memory and predator-taxis, the delay only occurs in the diffusion term, not in the reaction form. However, for the general predator-prey models, the gestation, hunting, migration and maturation delays, etc., often occur in the reaction term. By noticing this point, on the basis of system (1.4), the system
\begin{equation*}\left\{\begin{aligned}
&\frac{\partial u(x,t)}{\partial t}=d_{11}\Delta u(x,t)+\xi\left(u(x,t)v_{x}(x,t)\right)_{x}+f(u(x,t),v(x,t),u(x,t-\sigma),v(x,t-\sigma)), & x\in(0,\ell\pi),~t>0, \\
&\frac{\partial v(x,t)}{\partial t}=d_{22}\Delta v(x,t)-d_{21}\left(v(x,t)u_{x}(x,t-\tau)\right)_{x}+g(u(x,t),v(x,t),u(x,t-\sigma),v(x,t-\sigma)), & x\in(0,\ell\pi),~t>0,
\end{aligned}\right.\end{equation*}
which needs further research, where $\sigma>0$ is the delay occurs in the reaction term.

\section*{Acknowledgments}

The author is grateful to the anonymous referees for their useful suggestions which improve the contents of this article.

\section*{Disclosures and declarations}
\par\noindent This research did not involve human participants and animals.
\par\noindent Funding: This research did not receive any specific grant from funding agencies in the public, commercial, or not-for-profit sectors.
\par\noindent Conflicts of interest: The author declares that there is not conflict of interest, whether financial or non-financial.
\par\noindent Availability of data and material: This research didn't involve the private data, and the involving data and material are all available.
\par\noindent Code availability: The numerical simulations in this paper are carried by using the MATLAB software.
\par\noindent Authors' contributions: This manuscript is investigated and written by Yehu Lv.

\section*{Appendix A}
\setcounter{equation}{0}
\renewcommand\theequation{A.\arabic{equation}}

\begin{remark}
Assume that at $\tau=\tau_{c}$, system (4.2) has a pair of purely imaginary roots $\pm i\omega_{n_{c}}$ with $\omega_{n_{c}}>0$ for $n=n_{c} \in \mathbb{N}$ and all other eigenvalues have negative real parts. Let $\lambda(\tau)=\alpha_{1}(\tau) \pm i\alpha_{2}(\tau)$ be a pair of roots of system (4.2) near $\tau=\tau_{c}$ satisfying $\alpha_{1}(\tau_{c})=0$ and $\alpha_{2}(\tau_{c})=\omega_{n_{c}}$. In addition, the corresponding transversality condition holds.
\end{remark}

The normal form of Hopf bifurcation for the system (4.2) can be calculated by using our newly developed algorithm in Section 2. Here, we give the detail calculation procedures of $B_{1}, B_{21}, B_{22}, B_{23}$ steps by steps.
\begin{enumerate}[{\bf{Step 1:}}]
\item
\begin{equation*}
B_{1}=2\psi^{T}(0)\left(A\phi(0)-\frac{n_{c}^{2}}{\ell^{2}}\left(D_{1}\phi(0)+D_{2}\phi(-1)\right)\right)
\end{equation*}
with
\begin{equation*}
D_{1}=\left(\begin{array}{cc}
d_{11} & \xi u_{*} \\
0 & d_{22}
\end{array}\right),~D_{2}=\left(\begin{array}{cc}
0 & 0 \\
-d_{21}v_{*} & 0
\end{array}\right),~A=\left(\begin{array}{cc}
a_{11} & a_{12} \\
a_{21} & a_{22}
\end{array}\right).
\end{equation*}
Here,
\begin{equation*}
\phi=\left(\begin{array}{c}
1 \\
\frac{a_{11}-i\omega_{n_{c}}-d_{11}(n_{c}^{2}/\ell^{2})}{\xi u_{*}(n_{c}^{2}/\ell^{2})-a_{12}}
\end{array}\right),~\psi=\eta\left(\begin{array}{c}
1 \\
\frac{a_{12}-\xi u_{*}(n_{c}^{2}/\ell^{2})}{i\omega_{n_{c}}+d_{22}(n_{c}^{2}/\ell^{2})-a_{22}}
\end{array}\right)
\end{equation*}
with
\begin{equation*}
\eta=\frac{i\omega_{n_{c}}+\left(n_{c}/\ell\right)^{2}d_{22}-a_{22}}{2i\omega_{n_{c}}+\left(n_{c}/\ell\right)^{2}d_{11}-a_{11}+\left(n_{c}/\ell\right)^{2}d_{22}-a_{22}+\tau_{c} a_{12}d_{21}v_{*}\left(n_{c}/\ell\right)^{2}e^{-i\omega_{c}}}.
\end{equation*}
\end{enumerate}

\begin{enumerate}[{\bf{Step 2:}}]
\item
\begin{equation*}
B_{21}=\frac{3}{2\ell\pi}\psi^{T}A_{21}
\end{equation*}
with
\begin{equation*}\begin{aligned}
A_{21}&=3f_{30}\phi_{1}^{2}(0)\overline{\phi_{1}}(0)+3f_{03}\phi_{2}^{2}(0)\overline{\phi_{2}}(0)+3f_{21}(\phi_{1}^{2}(0)\overline{\phi_{2}}(0)+2\phi_{1}(0)\overline{\phi_{1}}(0)\phi_{2}(0)) \\
&+3f_{12}(2\phi_{1}(0)\phi_{2}(0)\overline{\phi_{2}}(0)+\overline{\phi_{1}}(0)\phi_{2}^{2}(0)).
\end{aligned}\end{equation*}
Here,
\begin{equation*}\begin{aligned}
f^{(1)}_{30}&=-6\tau_{c}m(1+u_{*})^{-4}v_{*},~f^{(2)}_{30}=6\tau_{c}su_{*}^{-4}v_{*}^{2}, \\
f^{(1)}_{21}&=2\tau_{c}m(1+u_{*})^{-3},~f^{(2)}_{21}=-4\tau_{c}su_{*}^{-3}v_{*}, \\
f^{(1)}_{12}&=0,~f^{(2)}_{12}=2\tau_{c}su_{*}^{-2}, \\
f^{(1)}_{03}&=0,~f^{(2)}_{03}=0.
\end{aligned}\end{equation*}
\end{enumerate}

\begin{enumerate}[{\bf{Step 3:}}]
\item
\begin{equation*}\begin{aligned}
B_{22}&=\frac{1}{\sqrt{\ell\pi}}\psi^{T}\left(\mathcal{S}_{2}\left(\phi(\theta),h_{0,11}(\theta)\right)+\mathcal{S}_{2}\left(\overline{\phi}(\theta),h_{0,20}(\theta)\right)\right) \\
&+\frac{1}{\sqrt{2\ell\pi}}\psi^{T}\left(\mathcal{S}_{2}\left(\phi(\theta),h_{2n_{c},11}(\theta)\right)+\mathcal{S}_{2}\left(\overline{\phi}(\theta),h_{2n_{c},20}(\theta)\right)\right)
\end{aligned}\end{equation*}
with
\begin{equation*}\begin{aligned}
\mathcal{S}_{2}\left(\phi(\theta),h_{0,11}(\theta)\right)&=2f_{20}\phi_{1}(0)h^{(1)}_{0,11}(0)+2f_{02}\phi_{2}(0)h^{(2)}_{0,11}(0) \\
&+2f_{11}\left(\phi_{1}(0)h^{(2)}_{0,11}(0)+\phi_{2}(0)h^{(1)}_{0,11}(0)\right), \\
\mathcal{S}_{2}\left(\overline{\phi}(\theta), h_{0,20}(\theta)\right)&=2f_{20}\overline{\phi}_{1}(0)h^{(1)}_{0,20}(0)+2f_{02}\overline{\phi}_{2}(0)h^{(2)}_{0,20}(0) \\
&+2f_{11}\left(\overline{\phi}_{1}(0)h^{(2)}_{0,20}(0)+\overline{\phi}_{2}(0)h^{(1)}_{0,20}(0)\right), \\
\mathcal{S}_{2}\left(\phi(\theta),h_{2n_{c},11}(\theta)\right)&=2f_{20}\phi_{1}(0)h^{(1)}_{2n_{c},11}(0)+2f_{02}\phi_{2}(0)h^{(2)}_{2n_{c},11}(0) \\
&+2f_{11}\left(\phi_{1}(0)h^{(2)}_{2n_{c},11}(0)+\phi_{2}(0)h^{(1)}_{2n_{c},11}(0)\right), \\
\mathcal{S}_{2}\left(\overline{\phi}(\theta),h_{2n_{c},20}(\theta)\right)&=2f_{20}\overline{\phi}_{1}(0)h^{(1)}_{2n_{c},20}(0)+2f_{02}\overline{\phi}_{2}(0)h^{(2)}_{2n_{c},20}(0) \\
&+2f_{11}\left(\overline{\phi}_{1}(0)h^{(2)}_{2n_{c},20}(0)+\overline{\phi}_{2}(0)h^{(1)}_{2n_{c},20}(0)\right).
\end{aligned}\end{equation*}
Here,
\begin{equation*}\begin{aligned}
f^{(1)}_{20}&=-2\tau_{c}\beta+2\tau_{c}m(1+u_{*})^{-3}v_{*},~f^{(2)}_{20}=-2\tau_{c}su_{*}^{-3}v_{*}^{2}, \\
f^{(1)}_{11}&=-\tau_{c}m(1+u_{*})^{-2},~f^{(2)}_{11}=2\tau_{c}su_{*}^{-2}v_{*}, \\
f^{(1)}_{02}&=0,~f^{(2)}_{02}=-2\tau_{c}su_{*}^{-1}.
\end{aligned}\end{equation*}
Furthermore, we have
\begin{equation*}
\left\{\begin{aligned}
h_{0,20}(\theta)&=\frac{1}{\sqrt{\ell\pi}}\left(\widetilde{\mathcal{M}}_{0}\left(2i\omega_{c}\right)\right)^{-1}A_{20}e^{2i\omega_{c}\theta}, \\
h_{0,11}(\theta)&=\frac{1}{\sqrt{\ell\pi}}\left(\widetilde{\mathcal{M}}_{0}(0)\right)^{-1}A_{11}
\end{aligned}\right.
\end{equation*}
and
\begin{equation*}
\left\{\begin{aligned}
h_{2n_{c},20}(\theta)&=\frac{1}{\sqrt{2\ell\pi}}\left(\widetilde{\mathcal{M}}_{2n_{c}}\left(2i\omega_{c}\right)\right)^{-1}\widetilde{A}_{20}e^{2i\omega_{c}\theta}, \\
h_{2n_{c},11}(\theta)&=\frac{1}{\sqrt{2\ell\pi}}\left(\widetilde{\mathcal{M}}_{2n_{c}}(0)\right)^{-1}\widetilde{A}_{11}
\end{aligned}\right.
\end{equation*}
with
\begin{equation*}
\widetilde{\mathcal{M}}_{n}(\lambda)=\lambda I_{2}+\tau_{c}(n/\ell)^{2}D_{1}+\tau_{c}(n/\ell)^{2}e^{-\lambda}D_{2}-\tau_{c}A.
\end{equation*}
Here,
\begin{equation*}\begin{aligned}
A_{20}&=f_{20}\phi_{1}^{2}(0)+f_{02}\phi_{2}^{2}(0)+2f_{11}\phi_{1}(0)\phi_{2}(0), \\
A_{11}&=2f_{20}\phi_{1}(0)\overline{\phi_{1}}(0)+2f_{02}\phi_{2}(0)\overline{\phi_{2}}(0)+2f_{11}(\phi_{1}(0)\overline{\phi_{2}}(0)+\overline{\phi_{1}}(0)\phi_{2}(0))
\end{aligned}\end{equation*}
and
\begin{equation*}
\left\{\begin{aligned}
\widetilde{A}_{20}&=A_{20}-2\left(n_{c}/\ell\right)^{2}A_{20}^{d}, \\
\widetilde{A}_{11}&=A_{11}-2\left(n_{c}/\ell\right)^{2}A_{11}^{d}
\end{aligned}\right.
\end{equation*}
with
\begin{equation*}
\left\{\begin{aligned}
A_{20}^{d}&=\left(\begin{array}{c}
2\xi \tau_{c}\phi_{1}(0)\phi_{2}(0) \\
-2d_{21}\tau_{c}\phi_{1}(-1)\phi_{2}(0)
\end{array}\right)=\overline{A_{02}^{d}}, \\
A_{11}^{d}&=\left(\begin{array}{c}
4\xi \tau_{c}\operatorname{Re}\left\{\phi_{1}(0)\overline{\phi_{2}}(0)\right\} \\
-4d_{21}\tau_{c}\operatorname{Re}\left\{\phi_{1}(-1)\overline{\phi_{2}}(0)\right\}
\end{array}\right).\end{aligned}\right.
\end{equation*}
\end{enumerate}

\begin{enumerate}[{\bf{Step 4:}}]
\item
\begin{equation*}\begin{aligned}
B_{23}=&-\frac{1}{\sqrt{\ell\pi}}\left(n_{c}/\ell\right)^{2} \psi^{T}\left(\mathcal{S}_{2}^{(d,1)}\left(\phi(\theta),h_{0,11}(\theta)\right)+\mathcal{S}_{2}^{(d,1)}\left(\overline{\phi}(\theta), h_{0,20}(\theta)\right)\right) \\
&+\frac{1}{\sqrt{2\ell\pi}}\psi^{T}\sum_{j=1,2,3} b_{2n_{c}}^{(j)}\left(\mathcal{S}_{2}^{(d,j)}\left(\phi(\theta),h_{2n_{c},11}(\theta)\right)+\mathcal{S}_{2}^{(d,j)}\left(\overline{\phi}(\theta),h_{2n_{c},20}(\theta)\right)\right)
\end{aligned}\end{equation*}
with
\begin{equation*}
b_{2n_{c}}^{(1)}=-\frac{n_{c}^{2}}{\ell^{2}},~b_{2n_{c}}^{(2)}=\frac{2n_{c}^{2}}{\ell^{2}},~b_{2n_{c}}^{(3)}=-\frac{(2n_{c})^{2}}{\ell^{2}}
\end{equation*}
and
\begin{equation*}
\left\{\begin{aligned}
&\mathcal{S}_{2}^{(d,1)}\left(\phi(\theta),h_{0,11}(\theta)\right)=2\left(\begin{array}{c}
\xi\tau_{c}\phi_{2}(0)h_{0,11}^{(1)}(0) \\
-d_{21}\tau_{c}\phi_{1}(-1)h_{0,11}^{(2)}(0)
\end{array}\right), \\
&\mathcal{S}_{2}^{(d,1)}\left(\overline{\phi}(\theta),h_{0,20}(\theta)\right)=2\left(\begin{array}{c}
\xi\tau_{c}\overline{\phi}_{2}(0)h_{0,20}^{(1)}(0) \\
-d_{21}\tau_{c}\overline{\phi}_{1}(-1)h_{0,20}^{(2)}(0)
\end{array}\right), \\
&\mathcal{S}_{2}^{(d,1)}\left(\phi(\theta),h_{2n_{c},11}(\theta)\right)=2\left(\begin{array}{c}
\xi\tau_{c}\phi_{2}(0)h_{2n_{c},11}^{(1)}(0) \\
-d_{21}\tau_{c}\phi_{1}(-1)h_{2n_{c},11}^{(2)}(0)
\end{array}\right), \\
&\mathcal{S}_{2}^{(d,2)}(\phi(\theta),h_{2n_{c},11}(\theta))=2\left(\begin{array}{c}
\xi\tau_{c}(\phi_{2}(0)h_{2n_{c},11}^{(1)}(0)+\phi_{1}(0)h_{2n_{c},11}^{(2)}(0)) \\
-d_{21}\tau_{c}(\phi_{2}(0)h_{2n_{c},11}^{(1)}(-1)+\phi_{1}(-1)h_{2n_{c},11}^{(2)}(0))
\end{array}\right), \\
&\mathcal{S}_{2}^{(d,3)}(\phi(\theta),h_{2n_{c},11}(\theta))=2\left(\begin{array}{c}
\xi \tau_{c}\phi_{1}(0)h_{2n_{c},11}^{(2)}(0) \\
-d_{21}\tau_{c}\phi_{2}(0)h_{2n_{c},11}^{(1)}(-1)
\end{array}\right), \\
&\mathcal{S}_{2}^{(d,1)}\left(\overline{\phi}(\theta),h_{2n_{c},20}(\theta)\right)=2\left(\begin{array}{c}
\xi\tau_{c}\overline{\phi}_{2}(0)h_{2n_{c},20}^{(1)}(0) \\
-d_{21}\tau_{c}\overline{\phi}_{1}(-1)h_{2n_{c},20}^{(2)}(0)
\end{array}\right), \\
&\mathcal{S}_{2}^{(d,2)}(\overline{\phi}(\theta),h_{2n_{c},20}(\theta))=2\left(\begin{array}{c}
\xi\tau_{c}(\overline{\phi}_{2}(0)h_{2n_{c},20}^{(1)}(0)+\overline{\phi}_{1}(0)h_{2n_{c},20}^{(2)}(0)) \\
-d_{21}\tau_{c}(\overline{\phi}_{2}(0)h_{2n_{c},20}^{(1)}(-1)+\overline{\phi}_{1}(-1)h_{2n_{c},20}^{(2)}(0))
\end{array}\right), \\
&\mathcal{S}_{2}^{(d,3)}(\overline{\phi}(\theta),h_{2n_{c},20}(\theta))=2\left(\begin{array}{c}
\xi \tau_{c}\overline{\phi}_{1}(0)h_{2n_{c},20}^{(2)}(0) \\
-d_{21}\tau_{c}\overline{\phi}_{2}(0)h_{2n_{c},20}^{(1)}(-1)
\end{array}\right).
\end{aligned}\right.
\end{equation*}
\end{enumerate}



\begin{thebibliography}{99}

\bibitem{lv1} J. Crank, The Mathematics of Diffusion, Oxford University Press, Oxford, 1979.

\bibitem{lv2} J.D. Murray, Mathematical Biology II: Spatial Models and Biomedical Applications, 3rd ed., Springer-Verlag, New York, 2003.

\bibitem{lv3} A. Okubo, S.A. Levin, Diffusion and Ecological Problems: Modern Perspectives, 2nd ed., Springer-Verlag, New York, 2001.

\bibitem{lv4} Y.H. Lv, Z.H. Liu, Turing-Hopf bifurcation analysis and normal form of a diffusive Brusselator model with gene expression time delay, \textit{Chaos, Solitons and Fractals}. \textbf{152}, 111478, 2021.

\bibitem{lv5} Y.H. Du, S.B. Hsu, A diffusive predator-prey model in heterogeneous environment, \textit{Journal of Differential Equations}. \textbf{203}(2), 331-364, 2004.

\bibitem{lv6} M.L. Rosenzweig, R.H. MacArthur, Graphical representation and stability conditions of predator-prey interactions, \textit{The American Naturalist}. \textbf{97}(895), 209-223, 1963.

\bibitem{lv7} Y.H. Du, J.P. Shi, A diffusive predator-prey model with a protection zone, \textit{Journal of Differential Equations}. \textbf{229}(1), 63-91, 2006.

\bibitem{lv8} S. Djilali, S. Bentout, Spatiotemporal patterns in a diffusive predator-prey model with prey social behavior, \textit{Acta Applicandae Mathematicae}. \textbf{169}(1), 125-143, 2020.

\bibitem{lv9} F. Souna, A. Lakmeche, S. Djilali, Spatiotemporal patterns in a diffusive predator-prey model with protection zone and predator harvesting, \textit{Chaos, Solitons and Fractals}. \textbf{140}, 110180, 2020.

\bibitem{lv10} J.P. Shi, C.C. Wang, H. Wang, et al., Diffusive spatial movement with memory, \textit{Journal of Dynamics and Differential Equations}. \textbf{32}(2), 979-1002, 2020.

\bibitem{lv11} J.P. Shi, C.C. Wang, H. Wang, Diffusive spatial movement with memory and maturation delays, \textit{Nonlinearity}. \textbf{32}(9), 3188, 2019.

\bibitem{lv12} Y.L. Song, Y.H. Peng, T.H. Zhang, The spatially inhomogeneous Hopf bifurcation induced by memory delay in a memory-based diffusion system, \textit{Journal of Differential Equations}. \textbf{300}(5), 597-624, 2021.

\bibitem{lv13} J.F. Wang, S.N. Wu, J.Y. Shi, Pattern formation in diffusive predator-prey systems with predator-taxis and prey-taxis, \textit{Discrete and Continuous Dynamical Systems-B}. \textbf{26}(3), 1273-1289, 2021.

\bibitem{lv14} P. Kareiva, G.T. Odell, Swarms of predators exhibit "preytaxis" if individual predators use area-restricted search, \textit{The American Naturalist}. \textbf{130}(2), 233-270, 1987.

\bibitem{lv15} A.M. Turner, G.G. Mittelbach, Predator avoidance and community structure: interactions among piscivores, planktivores, and plankton, \textit{Ecology}. \textbf{71}(6), 2241-2254, 1990.

\bibitem{lv16} T.M. Zaret, J.S. Suffern, Vertical migration in zooplankton as a predator avoidance mechanism, \textit{Limnology and Oceanography}. \textbf{21}(6), 804-813, 1976.

\bibitem{lv17} A. Chakraborty, M. Singh, D. Lucy, et al., Predator-prey model with prey-taxis and diffusion, \textit{Mathematical and Computer Modelling}. \textbf{46}(3-4), 482-498, 2007.

\bibitem{lv18} S.N. Wu, J.P. Shi, B.Y. Wu, Global existence of solutions and uniform persistence of a diffusive predator-prey model with prey-taxis, \textit{Journal of Differential Equations}. \textbf{260}(7), 5847-5874, 2016.

\bibitem{lv19} B.E. Ainseba, M. Bendahmane, A. Noussair, A reaction-diffusion system modeling predator-prey with prey-taxis, \textit{Nonlinear Analysis: Real World Applications}. \textbf{9}(5), 2086-2105, 2008.

\bibitem{lv20} J.P. Wang, M.X. Wang, The diffusive Beddington-DeAngelis predator-prey model with nonlinear prey-taxis and free boundary, \textit{Mathematical Methods in the Applied Sciences}. \textbf{41}(16), 6741-6762, 2018.

\bibitem{lv21} H.H. Qiu, S.J. Guo, S.Z. Li, Stability and bifurcation in a predator-prey system with prey-taxis, \textit{International Journal of Bifurcation and Chaos}. \textbf{30}(02), 2050022, 2020.

\bibitem{lv22} J.I. Tello, D. Wrzosek, Predator-prey model with diffusion and indirect prey-taxis, \textit{Mathematical Models and Methods in Applied Sciences}. \textbf{26}(11), 2129-2162, 2016.

\bibitem{lv23} Y.V. Tyutyunov, L.I. Titova, I.N. Senina, Prey-taxis destabilizes homogeneous stationary state in spatial Gause-Kolmogorov-type model for predator-prey system, \textit{Ecological Complexity}. \textbf{31}, 170-180, 2017.

\bibitem{lv24} J.P. Wang, M.X. Wang, The dynamics of a predator-prey model with diffusion and indirect prey-taxis, \textit{Journal of Dynamics and Differential Equations}. \textbf{32}(3), 1291-1310, 2020.

\bibitem{lv25} S.N. Wu, J.F. Wang, J.P. Shi, Dynamics and pattern formation of a diffusive predator-prey model with predator-taxis, \textit{Mathematical Models and Methods in Applied Sciences}. \textbf{28}(11), 2275-2312, 2018.

\bibitem{lv26} I. Ahn, C. Yoon, Global solvability of prey-predator models with indirect predator-taxis, \textit{Zeitschrift f$\ddot{u}$r angewandte Mathematik und Physik}. \textbf{72}(1), 1-20, 2021.

\bibitem{lv27} T. Faria, Normal forms and Hopf bifurcation for partial differential equations with delays, \textit{Transactions of the American Mathematical Society}. \textbf{352}(5), 2217-2238, 2000.

\bibitem{lv28} T. Faria, L.T. Magalh$\tilde{a}$es, Normal forms for retarded functional differential equations with parameters and applications to Hopf bifurcation, \textit{Journal of Differential Equations}. \textbf{122}(2), 181-200, 1995.

\bibitem{lv29} S.N. Chow, J.K. Hale, Methods of Bifurcation Theory, Springer-Verlag, New York, 1982.

\bibitem{lv30} S.B. Hsu, T.W. Hwang, Uniqueness of limit cycles for a predator-prey system of Holling and Leslie type, \textit{Canadian Applied Mathematics Quarterly}. \textbf{6}(2), 91-117, 1998.

\bibitem{lv31} S.S. Chen, J.S. Shi, Global stability in a diffusive Holling-Tanner predator-prey model, \textit{Applied Mathematics Letters}. \textbf{25}(3), 614-618, 2012.

\bibitem{lv32} R. Peng, M.X. Wang, Global stability of the equilibrium of a diffusive Holling-Tanner prey-predator model, \textit{Applied Mathematics Letters}. \textbf{20}(6), 664-670, 2007.

\bibitem{lv33} X. Li, W.H. Jiang, J.P., Shi, Hopf bifurcation and Turing instability in the reaction-diffusion Holling-Tanner predator-prey model. \textit{IMA Journal of Applied Mathematics}. \textbf{78}(2), 287-306, 2013.

\end{thebibliography}
\end{document}